\documentclass[final,onefignum,onetabnum]{siamart171218}



\usepackage{amsmath}
\usepackage{amssymb}
\usepackage{amsfonts}
\usepackage{lipsum}
\usepackage{graphicx}
\usepackage{subfigure}
\usepackage{epstopdf}
\usepackage{algorithm}
\usepackage[utf8]{inputenc}
\usepackage[T1]{fontenc}
\usepackage{url}
\usepackage{algpseudocode}
\usepackage{multirow}

\ifpdf
  \DeclareGraphicsExtensions{.eps,.pdf,.png,.jpg}
\else
  \DeclareGraphicsExtensions{.eps}
\fi


\newsiamremark{remark}{Remark}
\newsiamremark{hypothesis}{Hypothesis}
\crefname{hypothesis}{Hypothesis}{Hypotheses}
\newsiamthm{claim}{Claim}

\headers{Convergence and Semi-convergence of a Class P-BIM}{M. Mirzapour, A. Cegielski \& T. Elfving}

\title{Convergence and Semi-convergence of a Class of Constrained Block Iterative Methods}

\author{Mahdi Mirzapour\thanks{Department of Mathematics, Faculty of Sciences, Bu-Ali Sina University, Hamedan, Iran
  (\email{m.mirzapour@basu.ac.ir})}\and Andrzej Cegielski\thanks
{Institute of Mathematics, University of Zielona G\'{o}ra, Zielona G\'{o}ra, Poland(\email{a.cegielski@wmie.uz.zgora.pl})} \and Tommy Elfving\thanks{Department of Mathematics, Link\"{o}ping University, SE-581 83 Link\"{o}ping, Sweden
  (\email{tommy.elfving@liu.se}).}}

\usepackage{amsopn}


\typeout{TCILATEX Macros for Scientific Word 2.5 <22 Dec 95>.}
\typeout{NOTICE:  This macro file is NOT proprietary and may be 
freely copied and distributed.}
\makeatletter
%
\newcount\@hour\newcount\@minute\chardef\@x10\chardef\@xv60
\def\tcitime{
\def\@time{%
  \@minute\time\@hour\@minute\divide\@hour\@xv
  \ifnum\@hour<\@x 0\fi\the\@hour:%
  \multiply\@hour\@xv\advance\@minute-\@hour
  \ifnum\@minute<\@x 0\fi\the\@minute
  }}%

\@ifundefined{hyperref}{}{}

\@ifundefined{qExtProgCall}{\def\qExtProgCall#1#2#3#4#5#6{\relax}}{}
%
%
%
%
\def\QCTOpt[#1]#2{%
  \def\QCTOptB{#1}
  \def\QCTOptA{#2}
}
\def\QCTNOpt#1{%
  \def\QCTOptA{#1}
  \let\QCTOptB\empty
}
\def\Qct{%
  \@ifnextchar[{%
    \QCTOpt}{\QCTNOpt}
}
\def\QCBOpt[#1]#2{%
  \def\QCBOptB{#1}
  \def\QCBOptA{#2}
}
\def\QCBNOpt#1{%
  \def\QCBOptA{#1}
  \let\QCBOptB\empty
}
\def\Qcb{%
  \@ifnextchar[{%
    \QCBOpt}{\QCBNOpt}
}
\def\PrepCapArgs{%
  \ifx\QCBOptA\empty
    \ifx\QCTOptA\empty
      {}%
    \else
      \ifx\QCTOptB\empty
        {\QCTOptA}%
      \else
        [\QCTOptB]{\QCTOptA}%
      \fi
    \fi
  \else
    \ifx\QCBOptA\empty
      {}%
    \else
      \ifx\QCBOptB\empty
        {\QCBOptA}%
      \else
        [\QCBOptB]{\QCBOptA}%
      \fi
    \fi
  \fi
}
\newcount\GRAPHICSTYPE
\GRAPHICSTYPE=\z@
\def\GRAPHICSPS#1{%
 \ifcase\GRAPHICSTYPE
   \special{ps: #1}%
 \or
   \special{language "PS", include "#1"}%
 \fi
}%
%
%
%
\def\graffile#1#2#3#4{%
    \leavevmode
    \raise -#4 \BOXTHEFRAME{%
        \hbox to #2{\raise #3\hbox to #2{\null #1\hfil}}}%
}%
%
\def\draftbox#1#2#3#4{%
 \leavevmode\raise -#4 \hbox{%
  \frame{\rlap{\protect\tiny #1}\hbox to #2%
   {\vrule height#3 width\z@ depth\z@\hfil}%
  }%
 }%
}%
\newcount\draft
\draft=\z@

\newif\ifwasdraft
\wasdraftfalse

\def\GRAPHIC#1#2#3#4#5{%
 \ifnum\draft=\@ne\draftbox{#2}{#3}{#4}{#5}%
  \else\graffile{#1}{#3}{#4}{#5}%
  \fi
 }%
\def\addtoLaTeXparams#1{%
    \edef\LaTeXparams{\LaTeXparams #1}}%
%

\newif\ifBoxFrame \BoxFramefalse
\newif\ifOverFrame \OverFramefalse
\newif\ifUnderFrame \UnderFramefalse

\def\BOXTHEFRAME#1{%
   \hbox{%
      \ifBoxFrame
         \frame{#1}%
      \else
         {#1}%
      \fi
   }%
}

\def\doFRAMEparams#1{\BoxFramefalse\OverFramefalse\UnderFramefalse\readFRAMEparams#1\end}%
\def\readFRAMEparams#1{%
 \ifx#1\end%
  \let\next=\relax
  \else
  \ifx#1i\dispkind=\z@\fi
  \ifx#1d\dispkind=\@ne\fi
  \ifx#1f\dispkind=\tw@\fi
  \ifx#1t\addtoLaTeXparams{t}\fi
  \ifx#1b\addtoLaTeXparams{b}\fi
  \ifx#1p\addtoLaTeXparams{p}\fi
  \ifx#1h\addtoLaTeXparams{h}\fi
  \ifx#1X\BoxFrametrue\fi
  \ifx#1O\OverFrametrue\fi
  \ifx#1U\UnderFrametrue\fi
  \ifx#1w
    \ifnum\draft=1\wasdrafttrue\else\wasdraftfalse\fi
    \draft=\@ne
  \fi
  \let\next=\readFRAMEparams
  \fi
 \next
 }%
%

\def\IFRAME#1#2#3#4#5#6{%
      \bgroup
      \let\QCTOptA\empty
      \let\QCTOptB\empty
      \let\QCBOptA\empty
      \let\QCBOptB\empty
      #6%
      \parindent=0pt%
      \leftskip=0pt
      \rightskip=0pt
      \setbox0 = \hbox{\QCBOptA}%
      \@tempdima = #1\relax
      \ifOverFrame
          \typeout{This is not implemented yet}%
          \show\HELP
      \else
         \ifdim\wd0>\@tempdima
            \advance\@tempdima by \@tempdima
            \ifdim\wd0 >\@tempdima
               \textwidth=\@tempdima
               \setbox1 =\vbox{%
                  \noindent\hbox to \@tempdima{\hfill\GRAPHIC{#5}{#4}{#1}{#2}{#3}\hfill}\\%
                  \noindent\hbox to \@tempdima{\parbox[b]{\@tempdima}{\QCBOptA}}%
               }%
               \wd1=\@tempdima
            \else
               \textwidth=\wd0
               \setbox1 =\vbox{%
                 \noindent\hbox to \wd0{\hfill\GRAPHIC{#5}{#4}{#1}{#2}{#3}\hfill}\\%
                 \noindent\hbox{\QCBOptA}%
               }%
               \wd1=\wd0
            \fi
         \else
            \ifdim\wd0>0pt
              \hsize=\@tempdima
              \setbox1 =\vbox{%
                \unskip\GRAPHIC{#5}{#4}{#1}{#2}{0pt}%
                \break
                \unskip\hbox to \@tempdima{\hfill \QCBOptA\hfill}%
              }%
              \wd1=\@tempdima
           \else
              \hsize=\@tempdima
              \setbox1 =\vbox{%
                \unskip\GRAPHIC{#5}{#4}{#1}{#2}{0pt}%
              }%
              \wd1=\@tempdima
           \fi
         \fi
         \@tempdimb=\ht1
         \advance\@tempdimb by \dp1
         \advance\@tempdimb by -#2%
         \advance\@tempdimb by #3%
         \leavevmode
         \raise -\@tempdimb \hbox{\box1}%
      \fi
      \egroup%
}%
%
\def\DFRAME#1#2#3#4#5{%
 \begin{center}
     \let\QCTOptA\empty
     \let\QCTOptB\empty
     \let\QCBOptA\empty
     \let\QCBOptB\empty
     \ifOverFrame 
        #5\QCTOptA\par
     \fi
     \GRAPHIC{#4}{#3}{#1}{#2}{\z@}
     \ifUnderFrame 
        \nobreak\par #5\QCBOptA
     \fi
 \end{center}%
 }%
%
\def\FFRAME#1#2#3#4#5#6#7{%
 \begin{figure}[#1]%
  \let\QCTOptA\empty
  \let\QCTOptB\empty
  \let\QCBOptA\empty
  \let\QCBOptB\empty
  \ifOverFrame
    #4
    \ifx\QCTOptA\empty
    \else
      \ifx\QCTOptB\empty
        \caption{\QCTOptA}%
      \else
        \caption[\QCTOptB]{\QCTOptA}%
      \fi
    \fi
    \ifUnderFrame\else
      \label{#5}%
    \fi
  \else
    \UnderFrametrue%
  \fi
  \begin{center}\GRAPHIC{#7}{#6}{#2}{#3}{\z@}\end{center}%
  \ifUnderFrame
    #4
    \ifx\QCBOptA\empty
      \caption{}%
    \else
      \ifx\QCBOptB\empty
        \caption{\QCBOptA}%
      \else
        \caption[\QCBOptB]{\QCBOptA}%
      \fi
    \fi
    \label{#5}%
  \fi
  \end{figure}%
 }%
%
%
%
%
%
\newcount\dispkind%

\def\makeactives{
  \catcode`\"=\active
  \catcode`\;=\active
  \catcode`\:=\active
  \catcode`\'=\active
  \catcode`\~=\active
}
\bgroup
   \makeactives
   \gdef\activesoff{%
      \def"{\string"}
      \def;{\string;}
      \def:{\string:}
      \def'{\string'}
      \def~{\string~}
    }
\egroup

\def\FRAME#1#2#3#4#5#6#7#8{%
 \bgroup
 \@ifundefined{bbl@deactivate}{}{\activesoff}
 \ifnum\draft=\@ne
   \wasdrafttrue
 \else
   \wasdraftfalse%
 \fi
 \def\LaTeXparams{}%
 \dispkind=\z@
 \def\LaTeXparams{}%
 \doFRAMEparams{#1}%
 \ifnum\dispkind=\z@\IFRAME{#2}{#3}{#4}{#7}{#8}{#5}\else
  \ifnum\dispkind=\@ne\DFRAME{#2}{#3}{#7}{#8}{#5}\else
   \ifnum\dispkind=\tw@
    \edef\@tempa{\noexpand\FFRAME{\LaTeXparams}}%
    \@tempa{#2}{#3}{#5}{#6}{#7}{#8}%
    \fi
   \fi
  \fi
  \ifwasdraft\draft=1\else\draft=0\fi{}%
  \egroup
 }%
%

\def\TEXUX#1{"texux"}

%
%
%
%
%
%
%
\def\func#1{\mathop{\rm #1}}%
\def\limfunc#1{\mathop{\rm #1}}%

%
\long\def\QQQ#1#2{%
     \long\expandafter\def\csname#1\endcsname{#2}}%
\@ifundefined{QTP}{\def\QTP#1{}}{}
\@ifundefined{QEXCLUDE}{\def\QEXCLUDE#1{}}{}
\@ifundefined{Qlb}{}{}
\@ifundefined{Qlt}{}{}
\long\def\QQA#1#2{}%
\def\QTR#1#2{{\csname#1\endcsname #2}}
\def\EXPAND#1[#2]#3{}%
\def\NOEXPAND#1[#2]#3{}%
\def\LaTeXparent#1{}%
\def\ChildStyles#1{}%
\def\ChildDefaults#1{}%
\def\QTagDef#1#2#3{}%
%
\@ifundefined{StyleEditBeginDoc}{}{}
%
\def\QQfnmark#1{\footnotemark}

%
\def\makeatletter\input gnuindex.sty\makeatother\makeindex{\makeatletter\input gnuindex.sty\makeatother\makeindex}%
\@ifundefined{INDEX}{\def\INDEX#1#2{}{}}{}%
\@ifundefined{SUBINDEX}{\def\SUBINDEX#1#2#3{}{}{}}{}%
\@ifundefined{initial}%
   {\def\initial#1{\bigbreak{\raggedright\large\bf #1}\kern 2\p@\penalty3000}}%
   {}%
\@ifundefined{entry}{}{}%
\@ifundefined{primary}{}{}%
\@ifundefined{secondary}{}{}%
\@ifundefined{ZZZ}{}{\makeatletter\input gnuindex.sty\makeatother\makeindex\makeatletter}%
%
\@ifundefined{abstract}{%
 \def\abstract{%
  \if@twocolumn
   \section*{Abstract (Not appropriate in this style!)}%
   \else \small 
   \begin{center}{\bf Abstract\vspace{-.5em}\vspace{\z@}}\end{center}%
   \quotation 
   \fi
  }%
 }{%
 }%
\@ifundefined{endabstract}{\def\endabstract
  {\if@twocolumn\else\endquotation\fi}}{}%
\@ifundefined{maketitle}{\def\maketitle#1{}}{}%
\@ifundefined{affiliation}{\def\affiliation#1{}}{}%
\@ifundefined{proof}{}{}%
\@ifundefined{endproof}{}{}%
\@ifundefined{newfield}{\def\newfield#1#2{}}{}%
\@ifundefined{chapter}{\def\chapter#1{\par(Chapter head:)#1\par }%
 \newcount\c@chapter}{}%
\@ifundefined{part}{\def\part#1{\par(Part head:)#1\par }}{}%
\@ifundefined{section}{\def\section#1{\par(Section head:)#1\par }}{}%
\@ifundefined{subsection}{\def\subsection#1%
 {\par(Subsection head:)#1\par }}{}%
\@ifundefined{subsubsection}{\def\subsubsection#1%
 {\par(Subsubsection head:)#1\par }}{}%
\@ifundefined{paragraph}{\def\paragraph#1%
 {\par(Subsubsubsection head:)#1\par }}{}%
\@ifundefined{subparagraph}{\def\subparagraph#1%
 {\par(Subsubsubsubsection head:)#1\par }}{}%
\@ifundefined{therefore}{}{}%
\@ifundefined{backepsilon}{}{}%
\@ifundefined{yen}{}{}%
\@ifundefined{registered}{%
   \def\registered{\relax\ifmmode{}\r@gistered
                    \else$\m@th\r@gistered$\fi}%
 \def\r@gistered{^{\ooalign
  {\hfil\raise.07ex\hbox{$\scriptstyle\rm\text{R}$}\hfil\crcr
  \mathhexbox20D}}}}{}%
\@ifundefined{Eth}{}{}%
\@ifundefined{eth}{}{}%
\@ifundefined{Thorn}{}{}%
\@ifundefined{thorn}{}{}%
%
\@ifundefined{degree}{}{}%
%
\newdimen\theight
\def\Column{%
 \vadjust{\setbox\z@=\hbox{\scriptsize\quad\quad tcol}%
  \theight=\ht\z@\advance\theight by \dp\z@\advance\theight by \lineskip
  \kern -\theight \vbox to \theight{%
   \rightline{\rlap{\box\z@}}%
   \vss
   }%
  }%
 }%
\def\qed{%
 \ifhmode\unskip\nobreak\fi\ifmmode\ifinner\else\hskip5\p@\fi\fi
 \hbox{\hskip5\p@\vrule width4\p@ height6\p@ depth1.5\p@\hskip\p@}%
 }%
\def\miss{\hbox{\vrule height2\p@ width 2\p@ depth\z@}}%
%
%
\def\tcol#1{{\baselineskip=6\p@ \vcenter{#1}} \Column}  %
%
%
%
%
%

\def\newfmtname{LaTeX2e}
\def\chkcompat{%
   \if@compatibility
   \else
     \usepackage{latexsym}
   \fi
}

\ifx\fmtname\newfmtname
  \DeclareOldFontCommand{\rm}{\normalfont\rmfamily}{\mathrm}
  \DeclareOldFontCommand{\sf}{\normalfont\sffamily}{\mathsf}
  \DeclareOldFontCommand{\tt}{\normalfont\ttfamily}{\mathtt}
  \DeclareOldFontCommand{\bf}{\normalfont\bfseries}{\mathbf}
  \DeclareOldFontCommand{\it}{\normalfont\itshape}{\mathit}
  \DeclareOldFontCommand{\sl}{\normalfont\slshape}{\@nomath\sl}
  \DeclareOldFontCommand{\sc}{\normalfont\scshape}{\@nomath\sc}
  \chkcompat
\fi

%

\def\alpha{\Greekmath 010B }%
\def\beta{\Greekmath 010C }%
\def\gamma{\Greekmath 010D }%
\def\delta{\Greekmath 010E }%
\def\epsilon{\Greekmath 010F }%
\def\zeta{\Greekmath 0110 }%
\def\eta{\Greekmath 0111 }%
\def\theta{\Greekmath 0112 }%
\def\iota{\Greekmath 0113 }%
\def\kappa{\Greekmath 0114 }%
\def\lambda{\Greekmath 0115 }%
\def\mu{\Greekmath 0116 }%
\def\nu{\Greekmath 0117 }%
\def\xi{\Greekmath 0118 }%
\def\pi{\Greekmath 0119 }%
\def\rho{\Greekmath 011A }%
\def\sigma{\Greekmath 011B }%
\def\tau{\Greekmath 011C }%
\def\upsilon{\Greekmath 011D }%
\def\phi{\Greekmath 011E }%
\def\chi{\Greekmath 011F }%
\def\psi{\Greekmath 0120 }%
\def\omega{\Greekmath 0121 }%
\def\varepsilon{\Greekmath 0122 }%
\def\vartheta{\Greekmath 0123 }%
\def\varpi{\Greekmath 0124 }%
\def\varrho{\Greekmath 0125 }%
\def\varsigma{\Greekmath 0126 }%
\def\varphi{\Greekmath 0127 }%

\def\nabla{\Greekmath 0272 }
\def\FindBoldGroup{%
   {\setbox0=\hbox{$\mathbf{x\global\edef\theboldgroup{\the\mathgroup}}$}}%
}

\def\Greekmath#1#2#3#4{%
    \if@compatibility
        \ifnum\mathgroup=\symbold
           \mathchoice{\mbox{\boldmath$\displaystyle\mathchar"#1#2#3#4$}}%
                      {\mbox{\boldmath$\textstyle\mathchar"#1#2#3#4$}}%
                      {\mbox{\boldmath$\scriptstyle\mathchar"#1#2#3#4$}}%
                      {\mbox{\boldmath$\scriptscriptstyle\mathchar"#1#2#3#4$}}%
        \else
           \mathchar"#1#2#3#4%
        \fi 
    \else 
        \FindBoldGroup
        \ifnum\mathgroup=\theboldgroup 
           \mathchoice{\mbox{\boldmath$\displaystyle\mathchar"#1#2#3#4$}}%
                      {\mbox{\boldmath$\textstyle\mathchar"#1#2#3#4$}}%
                      {\mbox{\boldmath$\scriptstyle\mathchar"#1#2#3#4$}}%
                      {\mbox{\boldmath$\scriptscriptstyle\mathchar"#1#2#3#4$}}%
        \else
           \mathchar"#1#2#3#4%
        \fi     	    
	  \fi}

\newif\ifGreekBold  \GreekBoldfalse
\let\SAVEPBF=\pbf
\def\pbf{\GreekBoldtrue\SAVEPBF}%

\@ifundefined{theorem}{\newtheorem{theorem}{Theorem}}{}
\@ifundefined{lemma}{\newtheorem{lemma}[theorem]{Lemma}}{}
\@ifundefined{corollary}{\newtheorem{corollary}[theorem]{Corollary}}{}
\@ifundefined{conjecture}{}{}
\@ifundefined{proposition}{\newtheorem{proposition}[theorem]{Proposition}}{}
\@ifundefined{axiom}{}{}
\@ifundefined{remark}{\newtheorem{remark}{Remark}}{}
\@ifundefined{example}{}{}
\@ifundefined{exercise}{}{}
\@ifundefined{definition}{}{}

\@ifundefined{mathletters}{%
  \newcounter{equationnumber}  
  \def\mathletters{%
     \addtocounter{equation}{1}
     \edef\@currentlabel{\theequation}%
     \setcounter{equationnumber}{\c@equation}
     \setcounter{equation}{0}%
     \edef\theequation{\@currentlabel\noexpand\alph{equation}}%
  }
  
}{}

\@ifundefined{BibTeX}{%
    \def\BibTeX{{\rm B\kern-.05em{\sc i\kern-.025em b}\kern-.08em
                 T\kern-.1667em\lower.7ex\hbox{E}\kern-.125emX}}}{}%
\@ifundefined{AmS}%
    {\def\AmS{{\protect\usefont{OMS}{cmsy}{m}{n}%
                A\kern-.1667em\lower.5ex\hbox{M}\kern-.125emS}}}{}%
\@ifundefined{AmSTeX}{}{}%
%

%
%
\ifx\ds@amstex\relax
   \message{amstex already loaded}\makeatother 
\else
   \@ifpackageloaded{amstex}%
      {\message{amstex already loaded}\makeatother }
      {}
   \@ifpackageloaded{amsgen}%
      {\message{amsgen already loaded}\makeatother }
      {}
\fi
%
%
%
%
\let\DOTSI\relax
\def\RIfM@{\relax\ifmmode}%
\def\FN@{\futurelet\next}%
\newcount\intno@
\def\iint{\DOTSI\intno@\tw@\FN@\ints@}%
\def\iiint{\DOTSI\intno@\thr@@\FN@\ints@}%
\def\iiiint{\DOTSI\intno@4 \FN@\ints@}%
\def\idotsint{\DOTSI\intno@\z@\FN@\ints@}%
\def\ints@{\findlimits@\ints@@}%
\newif\iflimtoken@
\newif\iflimits@
\def\findlimits@{\limtoken@true\ifx\next\limits\limits@true
 \else\ifx\next\nolimits\limits@false\else
 \limtoken@false\ifx\ilimits@\nolimits\limits@false\else
 \ifinner\limits@false\else\limits@true\fi\fi\fi\fi}%
\def\multint@{\int\ifnum\intno@=\z@\intdots@                          
 \else\intkern@\fi                                                    
 \ifnum\intno@>\tw@\int\intkern@\fi                                   
 \ifnum\intno@>\thr@@\int\intkern@\fi                                 
 \int}
\def\multintlimits@{\intop\ifnum\intno@=\z@\intdots@\else\intkern@\fi
 \ifnum\intno@>\tw@\intop\intkern@\fi
 \ifnum\intno@>\thr@@\intop\intkern@\fi\intop}%
\def\intic@{%
    \mathchoice{\hskip.5em}{\hskip.4em}{\hskip.4em}{\hskip.4em}}%
\def\negintic@{\mathchoice
 {\hskip-.5em}{\hskip-.4em}{\hskip-.4em}{\hskip-.4em}}%
\def\ints@@{\iflimtoken@                                              
 \def\ints@@@{\iflimits@\negintic@
   \mathop{\intic@\multintlimits@}\limits                             
  \else\multint@\nolimits\fi                                          
  \eat@}
 \else                                                                
 \def\ints@@@{\iflimits@\negintic@
  \mathop{\intic@\multintlimits@}\limits\else
  \multint@\nolimits\fi}\fi\ints@@@}%
\def\intkern@{\mathchoice{\!\!\!}{\!\!}{\!\!}{\!\!}}%
\def\plaincdots@{\mathinner{\cdotp\cdotp\cdotp}}%
\def\intdots@{\mathchoice{\plaincdots@}%
 {{\cdotp}\mkern1.5mu{\cdotp}\mkern1.5mu{\cdotp}}%
 {{\cdotp}\mkern1mu{\cdotp}\mkern1mu{\cdotp}}%
 {{\cdotp}\mkern1mu{\cdotp}\mkern1mu{\cdotp}}}%
%
%
%
\def\RIfM@{\relax\protect\ifmmode}
\def\text{\RIfM@\expandafter\text@\else\expandafter\mbox\fi}
\let\nfss@text\text
\def\text@#1{\mathchoice
   {\textdef@\displaystyle\f@size{#1}}%
   {\textdef@\textstyle\tf@size{\firstchoice@false #1}}%
   {\textdef@\textstyle\sf@size{\firstchoice@false #1}}%
   {\textdef@\textstyle \ssf@size{\firstchoice@false #1}}%
   \glb@settings}

\def\textdef@#1#2#3{\hbox{{%
                    \everymath{#1}%
                    \let\f@size#2\selectfont
                    #3}}}
\newif\iffirstchoice@
\firstchoice@true
%
%
%
%
%
\def\Let@{\relax\iffalse{\fi\let\\=\cr\iffalse}\fi}%
\def\vspace@{\def\vspace##1{\crcr\noalign{\vskip##1\relax}}}%
\def\multilimits@{\bgroup\vspace@\Let@
 \baselineskip\fontdimen10 \scriptfont\tw@
 \advance\baselineskip\fontdimen12 \scriptfont\tw@
 \lineskip\thr@@\fontdimen8 \scriptfont\thr@@
 \lineskiplimit\lineskip
 \vbox\bgroup\ialign\bgroup\hfil$\m@th\scriptstyle{##}$\hfil\crcr}%
\def\Sb{_\multilimits@}%
\def\endSb{\crcr\egroup\egroup\egroup}%
\def\Sp{^\multilimits@}%

%
%
%
\newdimen\ex@
\ex@.2326ex
\def\rightarrowfill@#1{$#1\m@th\mathord-\mkern-6mu\cleaders
 \hbox{$#1\mkern-2mu\mathord-\mkern-2mu$}\hfill
 \mkern-6mu\mathord\rightarrow$}%
\def\leftarrowfill@#1{$#1\m@th\mathord\leftarrow\mkern-6mu\cleaders
 \hbox{$#1\mkern-2mu\mathord-\mkern-2mu$}\hfill\mkern-6mu\mathord-$}%
\def\leftrightarrowfill@#1{$#1\m@th\mathord\leftarrow
\mkern-6mu\cleaders
 \hbox{$#1\mkern-2mu\mathord-\mkern-2mu$}\hfill
 \mkern-6mu\mathord\rightarrow$}%
\def\overrightarrow{\mathpalette\overrightarrow@}%
\def\overrightarrow@#1#2{\vbox{\ialign{##\crcr\rightarrowfill@#1\crcr
 \noalign{\kern-\ex@\nointerlineskip}$\m@th\hfil#1#2\hfil$\crcr}}}%

\def\overleftarrow{\mathpalette\overleftarrow@}%
\def\overleftarrow@#1#2{\vbox{\ialign{##\crcr\leftarrowfill@#1\crcr
 \noalign{\kern-\ex@\nointerlineskip}$\m@th\hfil#1#2\hfil$\crcr}}}%
\def\overleftrightarrow{\mathpalette\overleftrightarrow@}%
\def\overleftrightarrow@#1#2{\vbox{\ialign{##\crcr
   \leftrightarrowfill@#1\crcr
 \noalign{\kern-\ex@\nointerlineskip}$\m@th\hfil#1#2\hfil$\crcr}}}%
\def\underrightarrow{\mathpalette\underrightarrow@}%
\def\underrightarrow@#1#2{\vtop{\ialign{##\crcr$\m@th\hfil#1#2\hfil
  $\crcr\noalign{\nointerlineskip}\rightarrowfill@#1\crcr}}}%

\def\underleftarrow{\mathpalette\underleftarrow@}%
\def\underleftarrow@#1#2{\vtop{\ialign{##\crcr$\m@th\hfil#1#2\hfil
  $\crcr\noalign{\nointerlineskip}\leftarrowfill@#1\crcr}}}%
\def\underleftrightarrow{\mathpalette\underleftrightarrow@}%
\def\underleftrightarrow@#1#2{\vtop{\ialign{##\crcr$\m@th
  \hfil#1#2\hfil$\crcr
 \noalign{\nointerlineskip}\leftrightarrowfill@#1\crcr}}}%


\def\qopnamewl@#1{\mathop{\operator@font#1}\nlimits@}
\let\nlimits@\displaylimits
\def\setboxz@h{\setbox\z@\hbox}

\def\varlim@#1#2{\mathop{\vtop{\ialign{##\crcr
 \hfil$#1\m@th\operator@font lim$\hfil\crcr
 \noalign{\nointerlineskip}#2#1\crcr
 \noalign{\nointerlineskip\kern-\ex@}\crcr}}}}

 \def\rightarrowfill@#1{\m@th\setboxz@h{$#1-$}\ht\z@\z@
  $#1\copy\z@\mkern-6mu\cleaders
  \hbox{$#1\mkern-2mu\box\z@\mkern-2mu$}\hfill
  \mkern-6mu\mathord\rightarrow$}
\def\leftarrowfill@#1{\m@th\setboxz@h{$#1-$}\ht\z@\z@
  $#1\mathord\leftarrow\mkern-6mu\cleaders
  \hbox{$#1\mkern-2mu\copy\z@\mkern-2mu$}\hfill
  \mkern-6mu\box\z@$}

\def\projlim{\qopnamewl@{proj\,lim}}
\def\injlim{\qopnamewl@{inj\,lim}}
\def\varinjlim{\mathpalette\varlim@\rightarrowfill@}
\def\varprojlim{\mathpalette\varlim@\leftarrowfill@}
\def\varliminf{\mathpalette\varliminf@{}}
\def\varliminf@#1{\mathop{\underline{\vrule\@depth.2\ex@\@width\z@
   \hbox{$#1\m@th\operator@font lim$}}}}
\def\varlimsup{\mathpalette\varlimsup@{}}
\def\varlimsup@#1{\mathop{\overline
  {\hbox{$#1\m@th\operator@font lim$}}}}

%
%
%
%
%
%
%
%
%
%
%
%
%
%
%
%
%
%
%
%
%
%
%

%
%
%
%
%
%
%
%
%
%
%
%
%
%
%
%
%
%
%
%
%
%

%
%
%
%
%
%
%
%
%
%
%
%
%
%
%
%
%
%
%
%
%
%
%
%
\begingroup \catcode `|=0 \catcode `[= 1
\catcode`]=2 \catcode `\{=12 \catcode `\}=12
\catcode`\\=12 
|gdef|@alignverbatim#1\end{align}[#1|end[align]]
|gdef|@salignverbatim#1\end{align*}[#1|end[align*]]

|gdef|@alignatverbatim#1\end{alignat}[#1|end[alignat]]
|gdef|@salignatverbatim#1\end{alignat*}[#1|end[alignat*]]

|gdef|@xalignatverbatim#1\end{xalignat}[#1|end[xalignat]]
|gdef|@sxalignatverbatim#1\end{xalignat*}[#1|end[xalignat*]]

|gdef|@gatherverbatim#1\end{gather}[#1|end[gather]]
|gdef|@sgatherverbatim#1\end{gather*}[#1|end[gather*]]

|gdef|@gatherverbatim#1\end{gather}[#1|end[gather]]
|gdef|@sgatherverbatim#1\end{gather*}[#1|end[gather*]]

|gdef|@multilineverbatim#1\end{multiline}[#1|end[multiline]]
|gdef|@smultilineverbatim#1\end{multiline*}[#1|end[multiline*]]

|gdef|@arraxverbatim#1\end{arrax}[#1|end[arrax]]
|gdef|@sarraxverbatim#1\end{arrax*}[#1|end[arrax*]]

|gdef|@tabulaxverbatim#1\end{tabulax}[#1|end[tabulax]]
|gdef|@stabulaxverbatim#1\end{tabulax*}[#1|end[tabulax*]]

|endgroup

\def\align{\@verbatim \frenchspacing\@vobeyspaces \@alignverbatim
You are using the "align" environment in a style in which it is not defined.}

\@namedef{align*}{\@verbatim\@salignverbatim
You are using the "align*" environment in a style in which it is not defined.}
\expandafter\let\csname endalign*\endcsname =\endtrivlist

\def\alignat{\@verbatim \frenchspacing\@vobeyspaces \@alignatverbatim
You are using the "alignat" environment in a style in which it is not defined.}

\@namedef{alignat*}{\@verbatim\@salignatverbatim
You are using the "alignat*" environment in a style in which it is not defined.}
\expandafter\let\csname endalignat*\endcsname =\endtrivlist

\def\xalignat{\@verbatim \frenchspacing\@vobeyspaces \@xalignatverbatim
You are using the "xalignat" environment in a style in which it is not defined.}

\@namedef{xalignat*}{\@verbatim\@sxalignatverbatim
You are using the "xalignat*" environment in a style in which it is not defined.}
\expandafter\let\csname endxalignat*\endcsname =\endtrivlist

\def\gather{\@verbatim \frenchspacing\@vobeyspaces \@gatherverbatim
You are using the "gather" environment in a style in which it is not defined.}

\@namedef{gather*}{\@verbatim\@sgatherverbatim
You are using the "gather*" environment in a style in which it is not defined.}
\expandafter\let\csname endgather*\endcsname =\endtrivlist

\def\multiline{\@verbatim \frenchspacing\@vobeyspaces \@multilineverbatim
You are using the "multiline" environment in a style in which it is not defined.}

\@namedef{multiline*}{\@verbatim\@smultilineverbatim
You are using the "multiline*" environment in a style in which it is not defined.}
\expandafter\let\csname endmultiline*\endcsname =\endtrivlist

\def\arrax{\@verbatim \frenchspacing\@vobeyspaces \@arraxverbatim
You are using a type of "array" construct that is only allowed in AmS-LaTeX.}

\def\tabulax{\@verbatim \frenchspacing\@vobeyspaces \@tabulaxverbatim
You are using a type of "tabular" construct that is only allowed in AmS-LaTeX.}

\@namedef{arrax*}{\@verbatim\@sarraxverbatim
You are using a type of "array*" construct that is only allowed in AmS-LaTeX.}
\expandafter\let\csname endarrax*\endcsname =\endtrivlist

\@namedef{tabulax*}{\@verbatim\@stabulaxverbatim
You are using a type of "tabular*" construct that is only allowed in AmS-LaTeX.}
\expandafter\let\csname endtabulax*\endcsname =\endtrivlist


\def\@@eqncr{\let\@tempa\relax
    \ifcase\@eqcnt \def\@tempa{& & &}\or \def\@tempa{& &}%
      \else \def\@tempa{&}\fi
     \@tempa
     \if@eqnsw
        \iftag@
           \@taggnum
        \else
           \@eqnnum\stepcounter{equation}%
        \fi
     \fi
     \global\tag@false
     \global\@eqnswtrue
     \global\@eqcnt\z@\cr}

 \def\endequation{%
     \ifmmode\ifinner 
      \iftag@
        \addtocounter{equation}{-1} 
        $\hfil
           \displaywidth\linewidth\@taggnum\egroup \endtrivlist
        \global\tag@false
        \global\@ignoretrue   
      \else
        $\hfil
           \displaywidth\linewidth\@eqnnum\egroup \endtrivlist
        \global\tag@false
        \global\@ignoretrue 
      \fi
     \else   
      \iftag@
        \addtocounter{equation}{-1} 
        \eqno \hbox{\@taggnum}
        \global\tag@false%
        $$\global\@ignoretrue
      \else
        \eqno \hbox{\@eqnnum}
        $$\global\@ignoretrue
      \fi
     \fi\fi
 } 

 \newif\iftag@ \tag@false
 
 \def\tag{\@ifnextchar*{\@tagstar}{\@tag}}
 \def\@tag#1{%
     \global\tag@true
     \global\def\@taggnum{(#1)}}
 \def\@tagstar*#1{%
     \global\tag@true
     \global\def\@taggnum{#1}%
}


\makeatother

\ifpdf
\hypersetup{
  pdftitle={Convergence and Semi-convergence of a class of constrained
block iterative methods},
  pdfauthor={M. Mirzapour, Andrzej Cegielski and T. Elfving}
}
\fi





\begin{document}

\maketitle


\begin{abstract}
In this paper, we analyze the convergence 
properties of projected non-stationary block iterative methods (P-BIM)
aiming to find a constrained solution to large linear, usually both noisy and
ill-conditioned, systems of equations.
We split
the error of the $k$th iterate into noise error and iteration error, and
 consider each error separately.
The iteration error is treated for a more general algorithm, also
suited for solving
split feasibility problems in Hilbert space. The results for P-BIM come
out as a special case.
The algorithmic step involves projecting onto closed convex sets.
When these sets are
polyhedral, and of finite dimension, it is shown that the algorithm
converges linearly.
We further
derive  an upper bound for the noise error of P-BIM. Based on this
bound, we suggest a new strategy for choosing relaxation parameters,
 which assist in speeding up
the reconstruction process and improving the quality of obtained images.
The relaxation parameters may depend on the noise.
The performance of the suggested  strategy is shown by examples taken
from the field of image reconstruction from projections.
\end{abstract}

\begin{keywords}
  Landweber type iteration, 
Block Iterative Method, Split Feasibility Problem,
Relaxation Parameters, Semi-convergence,
 Constraints, Tomographic Imaging
\end{keywords}

\begin{AMS}
  68Q25, 68R10, 68U05
\end{AMS}


\section{Introduction}

Among reconstruction methods in computerized tomography (CT) iterative methods
are particularly suited for their ability to incorporate constraints
(e.g. nonnegativity) on the sought solution. This is important in
situations with few and/or noisy data, as in limited data CT when
exposition to a low dose of X-rays is an issue.
After discretization of the underlying integral transform a large, sparse,
unstructured and ill-posed (sensitive to noise) linear system occurs:
\begin{equation}\label{sys:equat}
Ax=b,
\end{equation}
where $A\in \mathbb{R}^{m\times n}$ and $b\in \mathbb{R}^{m}$ are known,
 $x\in \mathbb{R}^{n}$ is the unknown image to be estimated. We will, in connection with each algorithm/problem
discuss possible consistency assumptions on the rhs $b$.


\subsection{Some background}
The first reported use of iteration in CT seems to be the paper \cite{gordon1970algebraic}. There,
in its basic form, the given method turned out to be identical to the Kaczmarz
method \cite{kaczmarz1937}. Then for each equation in (\ref{sys:equat}) the new iterate is constructed
such that the previous equation is satisfied. This is done for each of the
$m$ equations in turn, and then the process is repeated. For more on the history see \cite{gordon2010stop}. Somewhat later
in \cite{gilbert1972iterative} a fully simultaneous method, SIRT, was proposed where the new
iterate is constructed by adding to the previous one the gradient of the least
squares functional corresponding to (\ref{sys:equat}). Later block-iterative versions were
developed and analyzed, where, instead of using a single row, blocks of rows
are used in each iteration. A very general form of block-iteration is analyzed
in \cite{aharoni1989block}. For more on the development of block-iteration see, e.g. the recent
survey \cite{elfving2018row}, and further in Subsection \ref{BIM-section}.
To cope with constraints it was, early on, suggested to project the iterate
onto the nonnegative orthant, either after each iterative step or after a whole
cycle, i.e. after completing a full sweep through the matrix $A$. The order
in which the rows/blocks are used is called control, and can have quite an
impact of the behavior of the method \cite{herman2009fundamentals}. In cyclic control the rows/blocks
are picked up in their original order in the matrix. For some other controls,
e.g. almost cyclic control see \cite{C2013}. A recent strategy is stochastic control
which has attracted much interest see, e.g. \cite{Ne2019,NP2014,RPT2020}. We will however not
consider this control in our paper.
\subsection{Simultaneous Algorithms}\label{sec1.1}
Let $M$ and $N$ be given symmetric positive definite (SPD) matrices,
 and denote by $Q^{1/2}$  the square root of an SPD matrix $Q$.
Moreover $\{\lambda_{k}\}_{k=0}^{\infty}$ is a sequence of
  positive relaxation parameters, and  $\tilde{A}=M^{1/2}AN^{1/2}$.
The simultaneous iterative reconstruction technique (SIRT) is a class of iterative
and gradient-based methods defined as
\begin{equation}\label{general:landweber}
x^{k+1}=x^{k}+
\frac{\lambda_{k}}{\|\tilde{A}\|^{2}}
NA^{T}M\left(b-Ax^{k}\right)~~~~k=0,1,2,\ldots,
\end{equation}
see e.g. \cite{HANSEN2018air}. Here $\|\tilde A\|=\|\tilde A\|_2$ (note that
$\|\tilde A\|^2=\|\tilde A^T\tilde A\|$).
By premultiplying (\ref{general:landweber}) by $N^{-1/2}$ and putting $N^{-1/2}x^k=y^k$
one gets the equivalent iteration
\begin{equation*}
y^{k+1}=y^{k}+\frac{\lambda_k}{\|\tilde A\|^2}\tilde A^T(M^{1/2}b-\tilde Ay^k),
\end{equation*}
which as is well known converges for $\lambda_k \in [\varepsilon, 2-\varepsilon]$, where $\varepsilon$ is small positive constant. For the stationary
case, $\lambda_k=\lambda$ this is also a necessary condition.
It follows that the sequence $\{x^k\}$
converges towards an $M-$weighted least squares solution of (\ref{sys:equat}) provided
$\lambda_k \in [\varepsilon, 2-\varepsilon]$. Hence  no consistency assumption is needed
on $b$. For a more general result (for the nonstationary case) see \cite{Qu2008}.
Both $M$ and
$N$ can have  an efficacious impact on the initial speed of convergence.
A computational analysis of SIRT applied to CT-problems appears in \cite{Gregor2008}.

Let diag$(B)$ be a diagonal matrix with
$B_j$ being the $j$th diagonal element, and let Id denote
 the identity matrix of appropriate dimension (or in the case of infinite
dimensional Hilbert spaces the identity operator).
Further $a^{i}$ is $i$th row of
 $A$, $||.||$ is the Euclidean 2-norm, and $||.||_W$ is the weighted Euclidean
norm. Here $W$ is a given SPD matrix.

Some well-known choices
for
 $M$ and $N$ yielding fully simultaneous iterations
 are listed below:

\begin{itemize}
  \item $N=M=$Id, lead to the Landweber method
\cite{bertero1998introduction,engl1996regularization}.
  \item $N=$Id and $M=D_{C}=\frac{1}{m} diag\left(\frac{1}{\|a^{i}\|^2}\right)$
  lead to the Cimmino method \cite{cimmino1938}.
    \item $N=$Id and
$M=D_{W}=\frac{1}{m} diag\left(\frac{1}{\|a^{i}\|_{W}^2}\right)$ lead to CAV
(Component Averaging Method) \cite{censor2001component}.
     Here $W=diag(w_j)$  where $w_j$
equals  the number
of nonzero elements in the  $j$th column of $A$.
  \item $N=W^{-1}$ and $M=mD_C$ lead to DROP
(Diagonally Relaxed Orthogonal Projection)
   method \cite{censor2008diagonally}.
  \item $N=diag(\mbox{row sums})^{-1}$ and $M=diag (\mbox{column  sums})^{-1}$
lead
   to SART (Simultaneous Algebraic Reconstruction Technique)
\cite{avinash2001principles}.
  \item $N=$Id and $M=(2-\lambda)(D+\lambda L^{T})D(D+\lambda L)^{-1}$ where
   $L$ is the left triangular part of $AA^T$, $D$ its diagonal and taking
$\lambda_k=\lambda$  fixed, lead to
the symmetric Kaczmarz's method \cite{EN2009}.
\end{itemize}
 \subsection{Projected SIRT}
Consider the linear system of equations (\ref{sys:equat}) subject to $x\in C$ where $C$ denotes a closed convex set
and $P_C$
the operator of metric projection onto the set $C$.
 Inserting a  projection onto a convex set 
  after each iteration  can reduce the error and improve the quality of the
reconstructed image, see, e.g.
\cite{NM2015,piana1997projected} and Figure \ref{cons} below.
 We will now assume that $N=\textrm{Id}$ (but remark on the case $N\neq \textrm{Id}$
at the end of Section \ref{Conv:Analysis}).
Put
\begin{equation}
\hat{A}=M^{1/2}A, \ \hat{b}=M^{1/2}b.
\end{equation}
Then the
P(Projected)-SIRT Algorithm is (note that $\|\hat{A}\|^2=\|A^TMA\|$)
\begin{algorithm}
\caption{P-SIRT Algorithm}
\label{pro-algt1}
\begin{algorithmic}[1]
\State $x^{0}\in \mathbb{R}^n$ is an arbitrary starting point.

\For{$k\geq 0$ }
        \vspace{0.2cm}
%
$x^{k+1}=U(x^k)=P_{C}\left(x^{k}+
 \frac{\lambda_{k}}{\|\hat{A}\|^2}
\hat{A}^{T}\left(\hat{b}-\hat{A}x^{k}\right)\right)$
                \vspace{0.2cm}
        \EndFor
\end{algorithmic}
\end{algorithm}

Let $\limfunc{Fix}\hat{U}$ denote the set of fixed points of an operator
$\hat{U} \ : \ \mathbb{R}^n \rightarrow \mathbb{R}^n$.
The following characterization (adapted to our problem formulation) is shown
in \cite[Proposition 2.1]{Byr02} (also given in
\cite[Proposition 4.7.2]{C2013}),
\begin{equation} \label{fixP}
\limfunc{Fix}U=\arg\min_{x\in C}\|b-Ax\|^2_M.
\end{equation}

We have the following convergence result for Algorithm \ref{pro-algt1}.

\begin{theorem}\label{Theorem:1}

Let $\{x^{k}\}_{k=0}^{\infty}$ be the sequence  generated by Algorithm
 \ref{pro-algt1}, and let for $k\geq 0, \
\lambda_{k}\in (\varepsilon, 2-\varepsilon)$ for some small $\varepsilon > 0$.
%
Then for an
 arbitrary $x^{0}\in \mathbb{R}^n$ the sequence
$\{x^{k}\}_{k=0}^{\infty}$ converges to a fixed point of $U$
provided such a fixed point exists.
\end{theorem}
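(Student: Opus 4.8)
The plan is to recognize $U$ as a composition of averaged operators and then run the standard Fej\'er-monotonicity plus asymptotic-regularity argument, taking care of the fact that the relaxation parameters $\lambda_k$ vary with $k$. First I abbreviate $L=\|\hat A\|^{2}$ and introduce the unrelaxed Landweber operator
\[
T(x)=x-\tfrac{1}{L}\hat A^{T}(\hat Ax-\hat b),
\]
so that the algorithmic step reads $x^{k+1}=U_{\lambda_k}(x^{k})$, where $U_{\lambda}:=P_{C}\circ V_{\lambda}$ and $V_{\lambda}:=(1-\lambda)\,\mathrm{Id}+\lambda T$ is the $\lambda$-relaxation of $T$. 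Since $T$ is affine with symmetric linear part $\mathrm{Id}-\tfrac1L\hat A^{T}\hat A$, and $0\preceq \tfrac1L\hat A^{T}\hat A\preceq \mathrm{Id}$ (because $\|\hat A^{T}\hat A\|=\|\hat A\|^{2}=L$), a one-line spectral computation shows that $T$ is firmly nonexpansive, equivalently $\tfrac12$-averaged. Its fixed point set is $\mathrm{Fix}\,T=\{x:\hat A^{T}(\hat Ax-\hat b)=0\}=\arg\min_{x}\|b-Ax\|_{M}^{2}$, and the metric projection $P_{C}$ is likewise firmly nonexpansive with $\mathrm{Fix}\,P_{C}=C$.

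Next I record the consequences for the iteration operators. The relaxation $V_{\lambda}$ is $\tfrac{\lambda}{2}$-averaged, and the composition $U_{\lambda}=P_{C}\circ V_{\lambda}$ of two averaged operators is again averaged; the crucial point is that, because $\lambda_{k}\in(\varepsilon,2-\varepsilon)$, the corresponding averaging constants $\alpha_{k}$ are bounded away from $1$ by a fixed $\alpha_{0}<1$ independent of $k$. By the characterization \eqref{fixP}, and since $\lambda_{k}\in(0,2)$, every $U_{\lambda_k}$ has the \emph{same} fixed point set $F:=\arg\min_{x\in C}\|b-Ax\|_{M}^{2}=\mathrm{Fix}\,U$, which is nonempty by hypothesis. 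Fixing any $z\in F$, the averagedness of $U_{\lambda_k}$ yields
\[
\|x^{k+1}-z\|^{2}\le\|x^{k}-z\|^{2}-\rho_{0}\,\|x^{k+1}-x^{k}\|^{2},
\qquad \rho_{0}:=\tfrac{1-\alpha_{0}}{\alpha_{0}}>0 .
\]
In particular $\{x^{k}\}$ is Fej\'er monotone with respect to $F$, hence bounded, and $\|x^{k}-z\|$ converges.

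Telescoping the displayed inequality over $k$ gives $\sum_{k}\|x^{k+1}-x^{k}\|^{2}<\infty$, so $\|x^{k+1}-x^{k}\|\to0$ (asymptotic regularity). By boundedness I extract a subsequence $x^{k_{j}}\to x^{*}$ and, passing to a further subsequence, $\lambda_{k_{j}}\to\lambda^{*}\in[\varepsilon,2-\varepsilon]$. Since $\|x^{k_{j}+1}-x^{k_{j}}\|\to0$ we also have $x^{k_{j}+1}\to x^{*}$, and the joint continuity of $(\lambda,x)\mapsto P_{C}(V_{\lambda}(x))$ (here $P_{C}$ is $1$-Lipschitz and $V_{\lambda}(x)$ is jointly continuous in $(\lambda,x)$) gives $x^{*}=P_{C}(V_{\lambda^{*}}(x^{*}))$, i.e. $x^{*}\in\mathrm{Fix}\,U_{\lambda^{*}}=F$ because $\lambda^{*}\in(0,2)$. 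Finally, combining $x^{*}\in F$ with the already-established convergence of the monotone sequence $\|x^{k}-x^{*}\|$ and the fact that $\|x^{k_{j}}-x^{*}\|\to0$, the whole sequence converges to $x^{*}$.

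I expect the main obstacle to be the nonstationarity of the relaxation parameters. It forces two adjustments to the textbook averaged-operator argument: the averagedness estimate must be stated with a modulus $\rho_0$ that is \emph{uniform} in $k$ (which is exactly what the condition $\lambda_{k}\in(\varepsilon,2-\varepsilon)$ secures, $\varepsilon$ keeping $\alpha_k$ away from $1$), and the demiclosedness step must cope with the moving operators $U_{\lambda_k}$ by passing to a convergent subsequence of the parameters and invoking continuity. The feature that makes everything cohere is that \eqref{fixP} renders $\mathrm{Fix}\,U_{\lambda}$ independent of $\lambda\in(0,2)$, so the common target set $F$ survives throughout the proof.
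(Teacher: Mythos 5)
Your argument is correct, but it is worth noting that the paper does not actually prove Theorem \ref{Theorem:1} at all: its ``proof'' consists of citing \cite[Theorem 2.1]{Byr02} for the stationary case and \cite{Mou10,NM2015} for the nonstationary one. What you have written is a complete, self-contained derivation of the result. Your route --- identify $T$ as firmly nonexpansive via the spectral bound $0\preceq \tfrac1L\hat A^{T}\hat A\preceq \mathrm{Id}$, pass to the $\lambda$-relaxation, compose with $P_C$ to get a uniformly averaged family, use \eqref{fixP} to see that the target set $F$ is $\lambda$-independent, and then run Fej\'er monotonicity, asymptotic regularity, and a subsequence/continuity argument --- is essentially the standard mechanism underlying Byrne's CQ analysis, so in spirit you are reproving the cited result rather than finding a new path. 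It is also a finite-dimensional, simultaneous ($p=1$) specialization of what the paper does in Section \ref{Conv:Analysis}: there the authors work in Hilbert space with $\alpha$-SQNE operators and cutters (Theorem \ref{t-1}), where the uniform modulus $\rho_k\geq \varepsilon/(4-2\varepsilon)$ plays the role of your $\rho_0$ and weak regularity (demiclosedness) replaces your compactness-plus-continuity step. Your handling of the nonstationary parameters by extracting a convergent subsequence $\lambda_{k_j}\to\lambda^{*}$ and using joint continuity of $(\lambda,x)\mapsto P_C(V_\lambda(x))$ is a clean finite-dimensional substitute for the weak-regularity machinery of \cite{CRZ18}, and it correctly exploits that $\lambda^{*}\in[\varepsilon,2-\varepsilon]\subset(0,2)$ keeps the fixed-point set equal to $F$. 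The one place to be explicit if you polish this further is the uniform averagedness constant of the composition $P_C\circ V_\lambda$: you should state which composition rule you are invoking (e.g. $\alpha=\tfrac{2\max\{1/2,\lambda/2\}}{1+\max\{1/2,\lambda/2\}}$) so that the bound $\alpha\le\alpha_0<1$ for $\lambda\le 2-\varepsilon$ is visible, since that is exactly where the hypothesis $\lambda_k\in(\varepsilon,2-\varepsilon)$ enters.
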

\begin{proof} For $\lambda_k=\lambda$ see \cite[Theorem 2.1]{Byr02}.
For the non-stationary case see \cite{Mou10, NM2015}.
\end{proof}
Note that the same interval for the relaxation parameters is valid both for
SIRT and P-SIRT.
\begin{remark}
When $ker(A)=\emptyset$ then $U$ has a unique fixed point for any $C$.
When $ker(A)$ is not empty, e.g. when $A$ is underdetermined, then $U$ has
a fixed point when $C$ is compact, e.g. $C$ equals the unit hypercube (as in our
experiments). One may also note that with $C$ the rangespace of $A^T$ the unique
fixed point of $U$ is the pseudo-inverse solution.
\end{remark}




\subsection{Block-iterative Algorithms}\label{BIM-section}
There are two issues related to the
practical use of SIRT-methods on large data problems. One is the need for computer memory,
and the other the often slow rate of initial rate of convergence.
By partitioning the matrix into row-blocks and iterating sequentially
over the blocks one can improve on both. A careful study of implementation
of block-iterative methods on a modern multicore platform is presented in \cite{sorensen2014multicore}.
We mention also the possibility of performing the iterations in parallel
over the blocks \cite{sorensen2014multicore,NA2015}. However, here we do not consider this case.
The use of block-iteration has been quite successful in CT-applications,
where it is often referred to as Ordered Subset Iteration \cite{byrne1996block,Hudson1994}.
A thorough convergence analysis of block-iteration used in CT
is \cite{JW2003}.  A Landweber-Kaczmarz type block-iteration is proposed
and analyzed in \cite{haltmeier2009}.
Block-iterative versions of the examples listed in
Section \ref{sec1.1} are also presented in the corresponding references.

Let the matrix $A$
 be partitioned into $p$ blocks of
equations, which may contain common equations but each equation
should appear  at least in one of the blocks.
Choose $A_{t}\textrm{ and } b^{t}$ as the $t$th row-block of $A$ and
$b,$ respectively.
Let
$\{M_{t}\}_{t=1}^p$   be a set of given SPD
weight matrices.
Further $\{C_t\}_{t=1}^p$ is a {family} of closed convex sets.
Then \eqref{sys:equat} subject to $x\in C$ is equivalent to the following block problems:
\begin{equation}
\text{find }x\in C_{t}\text{ satisfying }A_{t}x=b^{t}\text{, }t=1,2,...,p%
\text{.}  \label{e-xinCt}
\end{equation}
Defining $C=\bigcap_{t=1}^{p} C_{t}$ problem \eqref{e-xinCt} can be also written as find $x\in C$ satisfying $Ax=b$. In our study, we prefer, however, form \eqref{e-xinCt}, because in algorithms for solving \eqref{e-xinCt} we need only a sequential access to the data. Recently, a more general form of this problem has been defined in \cite[ Eq. (1.3)]{reich2020split}, see also \cite[Algorithm 1]{yukawa2010multi}.

Define cyclic control of the data by
\begin{equation} \label{cycc}
i_{k}=[k]:=k(\func{mod}p)+1.
\end{equation}
Put
\begin{equation}
\hat{A}_{[k]}=M_{[k]}^{1/2}A_{[k]}, \
\hat{b}^{[k]}=M_{[k]}^{1/2}b^{[k]}.
\end{equation}
%

With these notations the
Projected Block-Iterative Method (P-BIM) is
\begin{algorithm}
\caption{Projected Block Iterative Method (P-BIM)}
\label{BIP:algt}
\begin{algorithmic}[1]
\State $x^{0}\in \mathbb{R}^n$ is an arbitrary starting point
   \For{$k\geq 0$}
$x^{k+1}=U_k(x^{k})=
P_{C_{[k]}}\left (x^{k}+\frac{\lambda_k}{\|\hat{A}_{[k]}\|^2}\hat{A}_{[k]}^T
\left(\hat{b}^{[k]}-\hat{A}_{[k]}x^{k}\right )\right ) $
\EndFor
 \end{algorithmic}
\end{algorithm}

Note that the $k-$dependence is in the relaxation parameters, and the cyclic
access of the data $\{C_t,A_t,M_t,b^{t}\}, \ t=1,2,\dots,p$.
A special case of the above algorithm has been studied in \cite[Eq. (7)]{NK2015}, where  $C_t=\mathbb{R}^n$, $t=1,2, \cdots ,p$.
A parallel gradient based projection algorithm has been studied in \cite[Algorithm 4]{CZ2014} and \cite{reich2020split}. The difference with respect to our algorithm is that they use the full gradient, whereas in our approach only a part of the gradient (of the underlying least squares functional) is used, which gives rise to the block structure. In the optimization literature such methods are sometimes called
incremental methods. The method $(1.4)$ and $(1.5)$ in \cite{nedic2001incremental} with $f=\|Ax-b\|^{2}_{M}$,
and assuming the relaxation parameter is constant during a whole cycle is identical
to P-BIM. In \cite{nedic2001incremental} several relaxation strategies are investigated. The one most similar
to our choice is using a constant parameter $\lambda_{k}=\lambda$. Here however, the convergence
result is rather weak see \cite[Proposition 2.1]{nedic2001incremental}.

We next characterize the fixed points of the operator $U_k$ in P-BIM.
Similarly as in (\ref{fixP}) we get
\begin{equation}
\limfunc{Fix}U_k=\arg\min_{x\in C_{[k]}}\|b^{[k]}-A_{[k]}x\|^2_{M_{[k]}}.
\end{equation}
Put
$$
\hat{U}_t(x)=
P_{C_{t}}\left (x+\frac{\lambda}{\|\hat{A}_{t}\|^2}\hat{A}_{t}^T
\left(\hat{b}^{t}-\hat{A}_{t}x\right )\right ),~t=1,2,\dots,p.
$$
Then
$$
\limfunc{Fix}\hat{U}_t=
\arg\min_{x\in C_{t}}\|b^{t}-A_{t}x\|^2_{M_{t}}, \
t=1,2,\dots,p.
$$
It holds
$$
\bigcap_{k=0}^{\infty}\limfunc{Fix}U_{k}=
\bigcap_{t=1}^{p}\limfunc{Fix}\hat{U}_{t}.
$$

The following result will be shown in
 Section \ref{Conv:Analysis} in Hilbert space settings.
\begin{theorem}\label{main:thm}
Let $\varepsilon\in (0,1)$. Assume that $\lambda_k\in[\varepsilon, 2-\varepsilon]$.
If $\bigcap_{t=1}^{p}\limfunc{Fix}\hat{U}_{t}\neq\emptyset$,
then the sequence $\{x^k\}_{k=0}^{\infty}$ produced by the Algorithm P-BIM
converges to $ x^{*}\in \bigcap_{t=1}^{p}\limfunc{Fix}\hat{U}_{t}$. Further if $\{C_t\}$
is polyhedral the iteration converges linearly.
\end{theorem}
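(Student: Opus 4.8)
The plan is to recognize each operator $U_k$ as a composition of two averaged nonexpansive operators sharing the common fixed-point set $F:=\bigcap_{t=1}^{p}\operatorname{Fix}\hat U_t$, and then to run the standard Fej\'er-monotonicity/demiclosedness machinery for nonstationary iterations with cyclic control, adding a linear error bound for the polyhedral case.

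First I would rewrite the Landweber step. Writing $f_t(x)=\tfrac12\|\hat b^{t}-\hat A_t x\|^2$, its gradient is $\|\hat A_t\|^2$-Lipschitz, so $V_t:=\mathrm{Id}-\frac{1}{\|\hat A_t\|^2}\nabla f_t$ is firmly nonexpansive, i.e. $\tfrac12$-averaged. Hence the inner map in $U_k$ equals $T_k=(1-\lambda_k)\mathrm{Id}+\lambda_k V_{[k]}=(1-\tfrac{\lambda_k}{2})\mathrm{Id}+\tfrac{\lambda_k}{2}R_{[k]}$ with $R_{[k]}$ nonexpansive, so $T_k$ is $\tfrac{\lambda_k}{2}$-averaged. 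Since $\lambda_k\in[\varepsilon,2-\varepsilon]$, its averaging constant lies in $[\varepsilon/2,1-\varepsilon/2]$, uniformly bounded away from $1$. As $P_{C_{[k]}}$ is $\tfrac12$-averaged, $U_k=P_{C_{[k]}}\circ T_k$ is averaged with constant bounded away from $1$ uniformly in $k$; and because $\operatorname{Fix}U_k=\arg\min_{x\in C_{[k]}}\|\cdots\|$ does not depend on $\lambda_k\in(0,2)$, the relation $\bigcap_{k}\operatorname{Fix}U_k=F$ recorded before the theorem holds.

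Next, the convergence. For any $z\in F$ nonexpansiveness gives $\|x^{k+1}-z\|\le\|x^k-z\|$, so $\{x^k\}$ is Fej\'er monotone with respect to $F$ and bounded. The averaged-operator inequality $\|U_k x-z\|^2\le\|x-z\|^2-\frac{1-\alpha_k}{\alpha_k}\|x-U_kx\|^2$ then telescopes to $\sum_k\|x^k-x^{k+1}\|^2<\infty$, whence $\|x^k-x^{k+1}\|\to0$. Given a weakly convergent subsequence $x^{k_j}\rightharpoonup\bar x$, I would look at a whole cycle $x^{k_j},\dots,x^{k_j+p}$: all these subsequences share the limit $\bar x$ since consecutive differences vanish, and as $l$ runs over a cycle $[k_j+l]$ runs over all blocks; passing to a further subsequence with $\lambda_{k_j+l}$ convergent, demiclosedness at $0$ of $\mathrm{Id}-\hat U_t$ (Browder, valid for averaged maps, the fixed-point set being $\lambda$-independent) forces $\bar x\in\operatorname{Fix}\hat U_t$ for every $t$, i.e. $\bar x\in F$. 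A Fej\'er-monotone sequence all of whose weak cluster points lie in $F$ converges weakly to a point $x^\ast\in F$; in finite dimension this is ordinary convergence.

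Finally, the linear rate when every $C_t$ is polyhedral, so that we work in $\mathbb R^n$. I would combine the quadratic descent with a linear error bound. Over one cycle the displayed inequality yields $\operatorname{dist}^2(x^{k+p},F)\le\operatorname{dist}^2(x^k,F)-c\sum_{l=0}^{p-1}\|x^{k+l}-x^{k+l+1}\|^2$ with $c>0$ fixed. The crux is the companion bound $\sum_{l=0}^{p-1}\|x^{k+l}-U_{k+l}(x^{k+l})\|^2\ge\delta^2\operatorname{dist}^2(x^k,F)$, valid uniformly on the bounded trajectory. For polyhedral $C_t$ each $\operatorname{Fix}\hat U_t$ is polyhedral and each $\hat U_t$ (an affine Landweber step followed by projection onto a polyhedron) is piecewise affine; a single-step residual $\|x-\hat U_t x\|$ then bounds $\operatorname{dist}(x,\operatorname{Fix}\hat U_t)$ from below, and the bounded linear regularity of a finite family of polyhedra (Bauschke--Borwein--Li, via Hoffman's lemma) turns $\max_t\operatorname{dist}(x,\operatorname{Fix}\hat U_t)$ into a lower bound for $\operatorname{dist}(x,F)$. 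Substituting gives $\operatorname{dist}^2(x^{k+p},F)\le(1-c\delta^2)\operatorname{dist}^2(x^k,F)$, and Fej\'er monotonicity promotes this geometric decay of the distance into linear convergence of $\{x^k\}$ to $x^\ast$.

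I expect this last error bound to be the main obstacle. Two points need care: the residuals are evaluated at the successive points $x^{k+l}$ rather than at a single $x^k$, which I would handle via the Lipschitz (nonexpansive) dependence of the piecewise-affine maps together with $\|x^{k+l}-x^{k+l+1}\|\to0$, or by passing to the one-cycle composite operator and proving its metric subregularity at $F$ directly; and polyhedrality is essential, since bounded linear regularity---and hence the linear rate---can fail for general closed convex $C_t$.
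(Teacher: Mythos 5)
Your proposal is correct and follows essentially the same route as the paper: the paper likewise decomposes $U_k$ into a projection composed with a relaxed Landweber operator, establishes that the composition is (strongly quasi-)nonexpansive with the right fixed-point set, gets weak convergence from Fej\'er monotonicity plus demiclosedness (``weak regularity''), and obtains the linear rate from the linear regularity of the polyhedral families $\{C_i,A_i^{-1}(Q_i)\}$ via Bauschke--Borwein. The only real difference is packaging: the paper works with general cutters in a block split-feasibility setting and delegates the nonstationary Fej\'er/demiclosedness machinery and the one-cycle error bound you identify as the main obstacle to cited results of Cegielski--Reich--Zalas, whereas you reconstruct that machinery directly.
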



\subsection{Noise-error}
A common situation is when  the right-hand side
$b$ is corrupted by noise, i.e. $b=\bar{b}+\delta b$,
where $\bar{b}$ denotes the exact right-hand side.
Let $\bar{x}^k$ be the iteration vector
using the exact right-hand side $\bar{b}$ and let $x^*$
be a fixed point of the unperturbed iteration.
The total error can be decomposed into two terms
  \begin{equation}
 x^k-x^* = (x^k-\bar{x}^k) + (\bar{x}^k-x^*).
  \end{equation}
The first term is called the \emph{noise error},
and the second the \emph{iteration error}.
During the first iterations of a convergent method the iteration error
dominates,
and hence the total error decreases -- but after a while the noise
error starts to
grow resulting in so-called semi-convergence, as will be demonstrated in
 Section \ref{Numerical}.
This   phenomenon originally defined in \cite[Section IV.1]{natterer1986mathematics} is further studied  in
\cite{natterer1986mathematics,engl1996regularization,bertero1998introduction,
herman2009fundamentals}.
%
It has been observed experimentally that the minimum error, i.e.
$\min_{k\geq 0}\|x^{k}-x^{*}\|$,
  is almost independent of both relaxation parameters and the weight matrix.
However the rate of decrease of the iteration error and the rate of
increase of the noise error are affected by both.
 The noise-error measures the deviation due to errors in the data. Here we will only consider errors in the rhs $b$. Possible errors in $A$ are usually caused by modelling errors, and should then be dealt with using model refinements. We will study how the noise-error evolves as a function of the iteration index $k$. Rather trivially it is proportional to $k$. However we will derive a bound proportional to $\sqrt{k}$, see \eqref{UpperNew}. We next describe some earlier contributions.
In \cite{elfving2010}, an upper bound for the
noise error of SIRT with a fixed relaxation parameter was derived.
Based on  minimizing
this  upper bound, two strategies
for choosing relaxation parameters
were suggested. The obtained sequences of
 relaxation parameters are both non-negative and non-ascending.
Later in \cite{EHN2012}
these   strategies were analyzed and used in P-SIRT. 
  An upper bound for the noise error of the block version of SIRT was derived
in \cite{NK2015}, and  the above two strategies for picking relaxation
parameters together with a third choice were studied.
Recently, the stationary case of SIRT and P-SIRT
were studied in \cite{Elfving2014}, \cite{KL-14}
when the M-matrix is non-symmetric.
This case includes the classical Kaczmarz method \cite{kaczmarz1937}.


\subsection{Organization}\label{motivations}
In Section \ref{Conv:Analysis} we  define and study a
sequential block-iterative iteration (of which P-BIM is a special case)
 adapted for solving split feasibility problems in Hilbert space.
The proof of Theorem \ref{main:thm} then follows from a more general result
in Section \ref{Conv:Analysis}. We further study the noise error of
P-BIM in Section \ref{SIRT:Sec}, and derive  a new upper  bound.
In particular we extend the noise error analysis of block-iteration
to {\it projected} block-iteration, and also to the case
 when the relaxation parameters are allowed to depend  on the noise.
In Section \ref{relaxation:par} we propose a new rule
for picking relaxation parameters, and present its properties.
We demonstrate the performance  of P-BIM using this and
other relaxation strategies with examples taken from
tomographic imaging in Section \ref{Numerical}.


\section{Hilbert space analysis}\label{Conv:Analysis}

In this section we will first formulate a quite general feasibility problem.
Then we continue to study algorithms, and their convergence properties for
the problem. It will be shown that P-BIM comes out as a special case.

Let $\mathcal{H}$ be a real Hilbert and $C_{i}\subseteq \mathcal{H}%
,i=1,2,...,m,$ be closed convex subsets. The \textit{convex feasibility
problem} (CFP) is to find $x\in \bigcap_{i=1}^{m}C_{i}$ if such a point
exists. CFP is a classical problem and was studied within the last decades by
many researchers, see, e.g. \cite{BB96, C2013, BC17} and the references
therein, where several methods for solving the CFP and their convergence
properties were presented. First we recall some notions regarding operators
in a Hilbert space. Let $X\subseteq \mathcal{H}$ be a nonempty convex set
and let $T:X\rightarrow \mathcal{H}$ be an operator with a fixed point. Then
$T$ is called $\alpha $-\textit{strongly quasi-nonexpansive} ($\alpha -$%
SQNE), where $\alpha \geq 0$, if
\[
\Vert Tx-z\Vert ^{2}\leq \Vert x-z\Vert ^{2}-\alpha \Vert Tx-x\Vert ^{2},\
\forall x\in X,\text{ and }z\in \limfunc{Fix}T,
\]%
\cite[Definition 2.1.38]{C2013}. The operator $T$ is called a \textit{cutter}
\cite[Definition 2.1.30]{C2013} if for all $x\in X$ and for all $z\in
\limfunc{Fix}T$ it holds%
\[
\langle x-Tx,z-Tx\rangle \leq 0\text{.}
\]%
By \cite[Section 11]{Goebel1984}, a firmly nonexpansive operator having a fixed point is a cutter, see e.g., \cite[Section 2.2]{C2013} or \cite[Section 4.1]{BC17} for a definition of a firmly nonexpansive operator.

Further, $T$ is called \textit{weakly regular} if $\limfunc{Id}-T$ is demi-closed at $%
0$, i.e., $x_{k}\rightharpoonup x$ and $x^{k}-Tx_{k}\rightarrow 0$ yields $%
x\in \limfunc{Fix}T$. Here $\rightharpoonup $ is a symbol for weak
convergence. $T$ is called \textit{linearly regular} if there is a constant $%
\delta >0$ such that for all $x\in X$ it holds
\[
d(x,\limfunc{Fix}T)\leq \delta \Vert Tx-x\Vert \text{,}
\]%
cf. \cite[Definition 3.1]{CRZ18}, where $d(x,C):=\inf_{y\in C}\Vert x-y\Vert
$ denotes the distance of $x$ to a subset $C$. In a similar way one can
define weak and linear regularity of sequences of operators, see \cite[%
Definition 4.1]{CRZ18} for details.

\subsection{Split feasibility problem}

Let $\mathcal{H},\mathcal{H}_{0}$ be real Hilbert spaces, $C\subseteq
\mathcal{H}$ and $Q\subseteq \mathcal{H}_{0}$ be closed convex subsets and $%
A:\mathcal{H}\rightarrow \mathcal{H}_{0}$ be a nonzero bounded linear
operator. The \textit{split feasibility problem }(SFP) is to
\[
\text{find }x\in C\text{ such that }Ax\in Q\text{.}
\]%
A more general form of the SFP is to
\begin{equation}\label{F1}
\text{minimize}~ \Vert P_{Q}(Ax)-Ax\Vert ^{2}~ \text{subject to}~ x\in C,
\end{equation}
or, equivalently, to find a fixed point of the operator
$T:\mathcal{H}\rightarrow \mathcal{H}$ defined by $T(x)=P_{C}(x+\frac{1}{%
\Vert A\Vert ^{2}}A^{\ast }(P_{Q}(Ax)-Ax)$. The SFP was introduced by Censor
and Elfving \cite{CE-1995} in the finite dimensional settings. Byrne \cite%
{Byr02} proved the weak convergence of the so called $CQ$-method, $%
x^{k+1}=T_{\lambda }(x^{k})$, to $\limfunc{argmin}_{x\in C}\Vert
P_{Q}(Ax)-Ax\Vert $ which is a solution of the SFP. Here $\lambda \in (0,2)$
is a relaxation parameter and $\Vert A\Vert $ denotes the spectral norm of $A
$. In the last $20$ years the SFP and its variants was intensively studied by
many researchers, see, e.g.  \cite{Mou11, Xu10, WX11, Ceg15, CRZ20,Reich2021} to
mention some of them.

\subsection{Block SFP}

Let $\mathcal{H},\mathcal{H}_{i}$, $i\in I:=\{1,2,\dots ,p\}$ be Hilbert
spaces. Consider a family of problems:
\begin{equation}
\text{find }x\in C_{i}\text{ such that }A_{i}x\in Q_{i}\text{,}  \label{p1}
\end{equation}%
$i\in I$, where $C_{i}\subseteq \mathcal{H},Q_{i}\subseteq \mathcal{H}_{i}$
are nonempty closed convex subsets, and $A_{i}:\mathcal{H}\rightarrow
\mathcal{H}_{i}$ are nonzero bounded linear operators, $i\in I$. We call
this family of problems, \textit{the block SFP}. Denote by
\begin{equation}
F_{i}:=C_{i}\cap A_{i}^{-1}(Q_{i})\text{,}
\end{equation}%
the solution set of problem (\ref{p1}), $i\in I$. Define
\[
\mathcal{H}_{0}:=\mathcal{H}_{1}\times \mathcal{H}_{2}\times \dots \times
\mathcal{H}_{p},
\]%
and a bounded linear operator $A:\mathcal{H}\rightarrow \mathcal{H}_{0}$ by
\begin{equation}
Ax=(A_{1}x,A_{2}x,\dots ,A_{p}x).
\end{equation}%
Put
\begin{equation}
Q=Q_{1}\times Q_{2}\times ,\dots \times Q_{p},\ C=C_{1}\cap C_{2}\cap \dots
\cap C_{p}\text{,}
\end{equation}%
and suppose that there is a common solution of all problems (\ref{p1}), $%
i\in I$, i.e.,
\begin{equation}
\Omega :=\bigcap_{i=1}^{p}F_{i}=C\cap A^{-1}(Q)\neq \emptyset .
\label{e-omega}
\end{equation}%
The family of problems (\ref{p1}), $i\in I$, can be written as the following
split feasibility problem \cite{Byr02,C2013,CE-2005}
\begin{equation}\label{sfp}
\text{find }x\in C\text{ such that }Ax\in Q\text{.}
\end{equation}

This problem formulation includes problem (\ref{e-xinCt}) by setting $%
Q_{i}=\{b^{i}\}$, $i\in I$. By picking $Q=\{b\}$ (and using the M-norm) problem \eqref{F1} becomes
minimization of $\Vert Ax-b\Vert _{M}$ subject to $x\in C$, i.e. problem \eqref{fixP}.
Similarly when $p>1$ we retrieve the corresponding block-problem as expressed in Theorem
\ref{main:thm}. In fact one can reformulate the block SFP into an optimization problem
(as for the case $p=1$, and problem \eqref{F1}) see \cite{CE-2005} for details.
Several extensions of \eqref{sfp} have been proposed, \cite{CS09, BCGR12}.
In, e.g. \cite{CS09} the sets $C,Q$ were extended to fixed points sets of
certain operators, $S$ and $T$. With $S=P_{C}$, $T=P_{Q}$ the original
formulation is retrieved. An interesting extension is compressed sensing,
where a sparse solution of a consistent linear system is sought. Here one
uses $l_{1}$-regularization which yields the $Q$-lasso problem $\min_{x\in
C}\Vert \left(I-P_{Q}\right)Ax\Vert^{2}_{2}+\gamma \Vert x\Vert _{1}$. With $C=%
\mathbb{R}
^{n}$ and $Q=\{b\}$ the original lasso problem \cite{Tib96} is recovered.
For some further work along these lines see \cite{MG18}.

\bigskip

In order to prove Theorem \ref{main:thm}, we present below some general convergence
results which follow from \cite{CRZ18}. Let $S_{i}:\mathcal{H}%
_{i}\rightarrow \mathcal{H}_{i}$ be a cutter with $\limfunc{Fix}S_{i}=Q_{i}$%
, $i\in I$. Define an operator $T_{i}:\mathcal{H}\rightarrow \mathcal{H}$ by
\begin{equation}
T_{i}(x)=\mathcal{L}\{S_{i}\}(x):=x+\frac{1}{\Vert A_{i}\Vert ^{2}}%
A_{i}^{\ast }(S_{i}(A_{i}x)-A_{i}x)\text{,}
\end{equation}%
i.e., $T_{i}$ is the Landweber operator corresponding to $S_{i}$, $i\in I$
(cf. \cite[Definition 4.1]{CRZ20}). Then the $\lambda $-relaxation of $T_{i}$
is $T_{i,\lambda }:=(1-\lambda )\limfunc{Id}+\lambda T_{i}$. Hence
\begin{equation}
T_{i,\lambda }(x)=\mathcal{L}\{S_{i,\lambda }\}(x):=x+\frac{\lambda }{\Vert
A_{i}\Vert ^{2}}A_{i}^{\ast }(S_{i}(A_{i}x)-A_{i}x)\text{,}  \label{e-Til}
\end{equation}%
where $\lambda \in (0,2)$, $i\in I$. By \cite[Lemma 3.1]{WX11} and by (\ref%
{e-omega}), $T_{i}$ is a cutter and $\limfunc{Fix}T_{i,\lambda }=\limfunc{Fix%
}T_{i}=A_{i}^{-1}(Q_{i})\neq \emptyset $. Let $U_{i}$ be a cutter with $%
\limfunc{Fix}U_{i}=C_{i}$, $i\in I$, and $U_{i,\mu }$ be the $\mu $%
-relaxation of $U_{i}.$ Define
\begin{equation}
V_{i}^{\lambda ,\mu }:=U_{i,\mu }T_{i,\lambda }\text{,}
\end{equation}%
$i\in I$, where $\lambda ,\mu \in (0,2)$. Let $\{i_{k}\}_{k=0}^{\infty
}\subseteq \{1,2,...,p\}$ be a control sequence and $x^{k}$ be generated by
the iteration
\begin{equation}
x^{k+1}=U_{i_{k},\mu _{k}}(x^{k}+\frac{\lambda _{k}}{\Vert A_{i_{k}}\Vert
^{2}}A_{i_{k}}^{\ast }((S_{i_{k}}(A_{i_{k}}x^{k})-A_{i_{k}}x^{k})))
\label{e-xk-ac1}
\end{equation}%
or shorter%
\begin{equation}
x^{k+1}=V_{k}x^{k}\text{,}  \label{e-xkac}
\end{equation}%
where $V_{k}=V_{i_{k}}^{\lambda _{k},\mu _{k}}$, $\lambda _{k},\mu _{k}\in
\lbrack \varepsilon ,2-\varepsilon ]$, $k\geq 0$, for some $\varepsilon \in
(0,1)$ and $x^{0}\in \mathcal{H}$ is arbitrary. Recall that a sequence $%
\{i_{k}\}_{k=0}^{\infty }\subseteq I=\{1,2,...,p\}$ is called an almost
cyclic control if there is a constant $s\geq p$ such that for any $k\geq 0$
the set $\{i_{k+1,}i_{k+2},...,i_{k+s}\}\subseteq I$ (cf. \cite[%
Definition 5.6.10]{C2013}). Theorems \ref{t-1} and \ref{t-2} presented below
apply results of \cite{CRZ18}.

\begin{theorem}
\label{t-1}Let $S_{i}$ and $U_{i}$ be weakly regular cutters, $i\in I$, $%
\lambda _{k},\mu _{k}\in \lbrack \varepsilon ,2-\varepsilon ]$, $k\geq 0$,
for some $\varepsilon \in (0,1)$ and $\{i_{k}\}_{k=0}^{\infty }$ be an
almost cyclic control. Then the sequence $x^{k}$ generated by iteration (\ref%
{e-xkac}) converges weakly to some $x^{\ast }\in \Omega $.
\end{theorem}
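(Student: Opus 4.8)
The plan is to verify that the iteration \eqref{e-xkac} fits the abstract framework of \cite{CRZ18} for sequences of strongly quasi-nonexpansive (SQNE) operators under almost cyclic control, and then to quote the corresponding weak-convergence theorem. The work therefore splits into (i) identifying the algebraic and regularity properties of the operators $V_i^{\lambda,\mu}=U_{i,\mu}T_{i,\lambda}$, and (ii) checking that the sequence $\{V_k\}$, with $V_k=V_{i_k}^{\lambda_k,\mu_k}$, meets the hypotheses of that theorem.

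First I would record the building blocks. By \cite[Lemma 3.1]{WX11} together with \eqref{e-omega}, $T_i$ is a cutter with $\limfunc{Fix}T_{i,\lambda}=\limfunc{Fix}T_i=A_i^{-1}(Q_i)$, and by assumption $U_i$ is a cutter with $\limfunc{Fix}U_i=C_i$. A $\lambda$-relaxation of a cutter with $\lambda\in[\varepsilon,2-\varepsilon]$ is $\frac{2-\lambda}{\lambda}$-SQNE, hence $\alpha$-SQNE with $\alpha\geq\frac{\varepsilon}{2-\varepsilon}>0$ uniformly in $k$ (see \cite[Section 2.1]{C2013}); thus both $T_{i,\lambda_k}$ and $U_{i,\mu_k}$ are SQNE with a constant bounded away from $0$. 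Since $\Omega=\bigcap_i F_i\neq\emptyset$ supplies a common fixed point, the composition $V_i^{\lambda,\mu}=U_{i,\mu}T_{i,\lambda}$ is again SQNE and its fixed-point set is the intersection $\limfunc{Fix}U_{i,\mu}\cap\limfunc{Fix}T_{i,\lambda}=C_i\cap A_i^{-1}(Q_i)=F_i$, by the composition rule for SQNE operators with a common fixed point (\cite[Section 2.1]{C2013}). In particular $\bigcap_{i\in I}\limfunc{Fix}V_i^{\lambda,\mu}=\Omega\neq\emptyset$, so $\{x^k\}$ is Fej\'er monotone with respect to $\Omega$, hence bounded with weak cluster points, and $\|x^{k+1}-x^k\|\to 0$ because the SQNE constants are bounded below.

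The decisive step is weak regularity. I would argue that $S_i$ weakly regular forces its Landweber transform $T_i=\mathcal{L}\{S_i\}$, and therefore its relaxation $T_{i,\lambda}$, to be weakly regular as well; this is precisely the demiclosedness-preserving property of the Landweber transform established in \cite{CRZ18} (cf. \cite[Definition 4.1]{CRZ20}). Combined with the weak regularity of $U_{i,\mu}$ inherited from that of $U_i$, the composition $V_i^{\lambda,\mu}$ is weakly regular. Consequently, under the almost cyclic control $\{i_k\}$, the whole sequence $\{V_k\}$ is a weakly regular sequence of operators in the sense of \cite[Definition 4.1]{CRZ18}. The almost cyclic assumption is exactly what makes this work: every index $i\in I$ recurs within each window of $s$ consecutive steps, so along any weakly convergent subsequence one combines $\|x^{k+1}-x^k\|\to 0$ with demiclosedness to conclude that the weak limit is fixed by each $V_i^{\lambda,\mu}$, hence lies in $\bigcap_i F_i=\Omega$.

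Finally, having checked that $\{V_k\}$ is a weakly regular sequence of SQNE operators with $\bigcap_k\limfunc{Fix}V_k\supseteq\Omega\neq\emptyset$ and that the relaxation parameters stay in $[\varepsilon,2-\varepsilon]$, I would invoke the weak-convergence theorem of \cite{CRZ18}: Fej\'er monotonicity together with the fact that every weak cluster point lies in $\Omega$ yields, via Opial's lemma, weak convergence of the entire sequence to a single $x^{\ast}\in\Omega$. I expect the main obstacle to be the regularity bookkeeping of the third paragraph --- verifying that weak regularity survives the Landweber transform and the composition, and that almost cyclic control upgrades the per-operator regularity into regularity of the sequence --- rather than the Fej\'er/Opial mechanics, which are routine once the hypotheses are in place.
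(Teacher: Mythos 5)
Your proposal follows essentially the same route as the paper's proof: establish that each $V_{i_k}^{\lambda_k,\mu_k}=U_{i_k,\mu_k}T_{i_k,\lambda_k}$ is $\rho_k$-SQNE with $\rho_k$ bounded away from zero, verify that weak regularity of $S_i$ and $U_i$ propagates through the Landweber transform, the relaxations, and the composition (the paper does this per index $i$ along the subsequences $n_k^i$ singled out by the almost cyclic control, citing \cite[Proposition 4.7 and Corollary 5.5]{CRZ18}), and then invoke the weak-convergence theorem \cite[Theorem 6.2(i)]{CRZ18}. The only cosmetic difference is that you sketch the Fej\'er/Opial mechanics explicitly, whereas the paper leaves them inside the cited theorem; the substance is identical.
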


\begin{proof}
Let $\lambda ,\mu \in (0,2)$. Because $T_{i}$ is a cutter, its $\lambda $%
-relaxation $T_{i,\lambda }$ defined by (\ref{e-Til}) is $\alpha $-SQNE, $%
i\in I$, where $\alpha =(2-\lambda )/\lambda $ (see, e.g., \cite[Theorem
2.1.39]{C2013}) and $\limfunc{Fix}T_{i,\lambda }=A_{i}^{-1}(Q_{i})\neq
\emptyset $, because $A_{i}^{-1}(Q_{i})\neq \emptyset $, $i=1,2,...,p$.
Similarly, $U_{i,\mu }$ is $\beta $-SQNE, where $\beta =(2-\mu )/\mu $. By $%
C_{i}\cap A_{i}^{-1}(Q_{i})\neq \emptyset $ (see (\ref{e-omega})), the
operator $V_{i}^{\lambda ,\mu }$ is $\rho $-SQNE, where $\rho =(\alpha
^{-1}+\beta ^{-1})^{-1}$ and $\limfunc{Fix}V_{i}^{\lambda ,\mu }=C_{i}\cap
A_{i}^{-1}(Q_{i})$ \cite[Corollary 2.1.47 and Theorem 2.1.26(ii)]{C2013}, $%
i\in I$. Thus, one can easily check that $V_{i_{k}}^{\lambda _{k},\mu _{k}}$
is $\rho _{k}$-SQNE, where $\rho _{k}\geq \varepsilon /(4-2\varepsilon )>0$.
Let $n_{-1}^{i}=-1$ and
\begin{equation}
n_{k}^{i}=\min \{n>n_{k-1}^{i}:i_{n}=i\}\text{,}  \label{e-nki}
\end{equation}%
$i\in I$, $k\geq 0$. Because $\{i_{k}\}_{k=0}^{\infty }$ is almost cyclic, $%
n_{k}^{i}$ is well defined and there is $s\geq p$ such that $%
n_{k+1}^{i}-n_{k}^{i}\leq s$ for all $k\geq 0$, $i=1,2,...,p$. The sequences
$\{T_{i,\lambda _{n_{k}^{i}}}\}_{k=0}^{\infty }$ and $\{U_{i,\mu
_{n_{k}^{i}}}\}_{k=0}^{\infty }$ are weakly regular \cite[Proposition 4.7]%
{CRZ18}. By $V_{n_{k}^{i}}=V_{i}^{\lambda _{n_{k}^{i}},\mu
_{n_{k}^{i}}}=U_{i,\mu _{n_{k}^{i}}}T_{i,\lambda _{n_{k}^{i}}}$, the
sequence $\{V_{n_{k}^{i}}\}_{k=0}^{\infty }$ is weakly regular \cite[%
Corollary 5.5]{CRZ18}. Now \cite[Theorem 6.2(i)]{CRZ18} yields the weak
convergence of $x^{k}$ to $x^{\ast }\in \Omega $.
\end{proof}

\bigskip

Setting $S_{i}=P_{Q_{i}}$, $x\in \mathcal{H}$, $U_{i}=P_{C_{i}}$, $\mu _{k}=1
$ and $\{i_{k}\}_{k=0}^{\infty }$ a cyclic control, i.e., $i_{k}=[k]:=k(%
\func{mod}p)+1$, we obtain the following.

\begin{corollary}
Let $\lambda _{k}\in \lbrack \varepsilon ,2-\varepsilon ]$, $k\geq 0$, for
some $\varepsilon \in (0,1)$. Then the sequence $x^{k}$ generated by
iteration
\begin{equation}
x^{k+1}=P_{C_{[k]}}(x^{k}+\frac{\lambda _{k}}{\Vert A_{[k]}\Vert ^{2}}%
A_{[k]}^{\ast }((P_{Q_{[k]}}(A_{[k]}x^{k})-A_{[k]}x^{k})).  \label{e-xk-CQ}
\end{equation}%
converges weakly to some $x^{\ast }\in \Omega $.
\end{corollary}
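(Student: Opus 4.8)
The plan is to deduce this Corollary directly from Theorem~\ref{t-1} by verifying that the stated specialization $S_i=P_{Q_i}$, $U_i=P_{C_i}$, $\mu_k\equiv 1$, and cyclic control $i_k=[k]$ satisfies every hypothesis of that theorem, and that under these choices the abstract recursion \eqref{e-xk-ac1}--\eqref{e-xkac} reduces verbatim to \eqref{e-xk-CQ}. Thus the argument is pure verification: once Theorem~\ref{t-1} is available, no new estimate is needed.

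First I would confirm that the metric projections are weakly regular cutters. Since $Q_i$ and $C_i$ are nonempty closed convex subsets of Hilbert spaces, the projections $P_{Q_i}$ and $P_{C_i}$ are firmly nonexpansive with $\limfunc{Fix}P_{Q_i}=Q_i$ and $\limfunc{Fix}P_{C_i}=C_i$; these fixed-point sets are nonempty because the standing assumption $\Omega=\bigcap_{i} F_i\neq\emptyset$ in \eqref{e-omega} forces each $Q_i$ and $C_i$ to be nonempty. By the remark recalled above following \cite[Section~11]{Goebel1984}, a firmly nonexpansive operator with a fixed point is a cutter, so $P_{Q_i}$ and $P_{C_i}$ are cutters. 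The only genuinely analytic ingredient is weak regularity, namely that $\limfunc{Id}-P_{Q_i}$ and $\limfunc{Id}-P_{C_i}$ are demi-closed at $0$. This is the classical demiclosedness property of metric projections (equivalently, of firmly nonexpansive operators) in Hilbert space, valid for each $i\in I$. I would single out this demiclosedness as the crux of the verification, although it is entirely standard.

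Next I would check the remaining structural hypotheses. With $\mu_k\equiv 1$ the $\mu$-relaxation collapses to the operator itself, $U_{i,\mu_k}=(1-1)\limfunc{Id}+1\cdot U_i=U_i=P_{C_i}$, and $1\in[\varepsilon,2-\varepsilon]$ for every $\varepsilon\in(0,1)$, so the admissibility condition on the relaxation parameters is met (the hypothesis $\lambda_k\in[\varepsilon,2-\varepsilon]$ is assumed in the statement). Moreover, cyclic control $i_k=[k]=k(\func{mod}p)+1$ is a special case of almost cyclic control: taking $s=p$, any $p$ consecutive indices $\{i_{k+1},\dots,i_{k+p}\}$ run through all of $I=\{1,\dots,p\}$, so the covering requirement holds for every $k\geq0$.

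Finally I would substitute these choices into the abstract iteration. Replacing $S_{i_k}$ by $P_{Q_{[k]}}$, $U_{i_k,\mu_k}$ by $P_{C_{[k]}}$, and $A_{i_k}$ by $A_{[k]}$ in \eqref{e-xk-ac1} turns it into $x^{k+1}=P_{C_{[k]}}(x^{k}+\tfrac{\lambda_k}{\|A_{[k]}\|^2}A_{[k]}^{\ast}(P_{Q_{[k]}}(A_{[k]}x^{k})-A_{[k]}x^{k}))$, which is precisely \eqref{e-xk-CQ}. With all hypotheses of Theorem~\ref{t-1} verified, that theorem yields the weak convergence of $\{x^{k}\}$ to some $x^{\ast}\in\Omega$, completing the proof.
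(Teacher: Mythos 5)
Your proposal is correct and follows the same route as the paper: the paper's proof also reduces the Corollary to Theorem~\ref{t-1} by invoking the weak regularity (demiclosedness, via Opial's lemma) of metric projections as nonexpansive operators, with the cyclic control and $\mu_k=1$ specializations handled exactly as you describe. Your write-up simply makes explicit the verification steps the paper leaves implicit.
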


\begin{proof}
The metric projection, as a nonexpansive operator is weakly regular \cite[%
Lemma 2]{Opi67}. Thus, the Corollary follows directly from Theorem \ref{t-1}.
\end{proof}

\bigskip

Now taking $Q_{[k]}=\{\hat{b}^{[k]}\}$ and $A_{[k]}=\hat{A}_{[k]}$ in (\ref%
{e-xk-CQ}) and assuming $\mathcal{H}_{i}$ being of finite dimension we
retrieve P-BIM and this proves Theorem \ref{main:thm}.

\bigskip

Next suppose that $C_{i}$ is a polyhedral set (the intersection of a finite
family of half spaces), $U_{i}=P_{C_{i}}$, $\mathcal{H}_{i}$ is finite
dimensional, $Q_{i}=\{b^{i}\}$ and $S_{i}=P_{Q_{i}}$, $i\in I$. Then $%
P_{Q_{i}}(y)=b^{i}$, $y\in \mathcal{H}_{i}$, the range of $A_{i}$ is closed,
$A_{i}^{-1}(Q_{i})$ is polyhedral, $i\in I$, and iteration (\ref{e-xk-ac1})
can be written as
\begin{equation}
x^{k+1}=P_{C_{i_{k}},\mu _{k}}(x^{k}+\frac{\lambda _{k}}{\Vert
A_{i_{k}}\Vert ^{2}}A_{i_{k}}^{\ast }(b_{i_{k}}-A_{i_{k}}x^{k}))\text{.}
\label{e-xk-lin}
\end{equation}%
Denote $b=(b^{1},b^{2},...b^{p})\in \mathcal{H}_{0}$. Then $%
A^{-1}(Q)=\{x\in \mathcal{H}:Ax=b\}$ and the solution set has the form $%
\Omega =\{x\in C:Ax=b\}$.

\begin{theorem}
\label{t-2}Let $C_{i}$, $i\in I$, be polyhedral, $\lambda _{k},\mu _{k}\in
\lbrack \varepsilon ,2-\varepsilon ]$, $k\geq 0$, for some $\varepsilon \in
(0,1)$ and $\{i_{k}\}_{k=0}^{\infty }\subseteq I$  be almost cyclic. Then the sequence $%
x^{k}$ generated by iteration (\ref{e-xk-lin}) converges linearly to some $%
x^{\ast }\in \Omega $.
\end{theorem}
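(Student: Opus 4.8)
The plan is to upgrade the \emph{weak} regularity exploited in Theorem~\ref{t-1} to \emph{linear} regularity and then invoke the linearly convergent counterpart of \cite[Theorem 6.2]{CRZ18}. From the proof of Theorem~\ref{t-1} we already know that each $V_{i_k}^{\lambda_k,\mu_k}$ is $\rho_k$-SQNE with the uniform lower bound $\rho_k\geq \varepsilon/(4-2\varepsilon)>0$, so the sequence $\{V_{i_k}\}_{k=0}^\infty$ is uniformly SQNE. The only missing ingredient for a linear rate is that this sequence be linearly regular relative to $\Omega$. Thus the whole argument reduces to producing a Hoffman-type error bound and transporting it through the relaxations, the composition, and the almost cyclic control.

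First I would treat the two building blocks separately. Since $Q_i=\{b^i\}$, the operator $S_i=P_{Q_i}$ is constant and $T_{i,\lambda}(x)=x+\frac{\lambda}{\|A_i\|^2}A_i^{\ast}(b^i-A_ix)$ is an affine relaxed Landweber operator with $\limfunc{Fix}T_{i,\lambda}=A_i^{-1}(Q_i)=:V_i$, an affine subspace whose linear part has closed range (finite dimension). Using $\|A_i^{\ast}y\|\geq \sigma_i\|y\|$ for $y$ in the range of $A_i$, where $\sigma_i>0$ is the smallest nonzero singular value of $A_i$, together with $\|A_ix-b^i\|\geq \sigma_i\,d(x,V_i)$, one gets $\|T_{i,\lambda}x-x\|\geq \frac{\varepsilon\sigma_i^2}{\|A_i\|^2}\,d(x,V_i)$, i.e.\ $\{T_{i,\lambda}\}$ is linearly regular with a constant uniform over $\lambda\in[\varepsilon,2-\varepsilon]$. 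For the second block, $U_{i,\mu}=(1-\mu)\limfunc{Id}+\mu P_{C_i}$ satisfies $\|U_{i,\mu}x-x\|=\mu\,d(x,C_i)$ exactly, so $d(x,\limfunc{Fix}U_{i,\mu})=d(x,C_i)\leq \varepsilon^{-1}\|U_{i,\mu}x-x\|$; hence $\{U_{i,\mu}\}$ is linearly regular as well.

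Next I would compose the two. The fixed point set of $V_i^{\lambda,\mu}=U_{i,\mu}T_{i,\lambda}$ is $F_i=C_i\cap V_i$, and since $C_i$ is polyhedral and $V_i$ is affine, the pair $\{C_i,V_i\}$ is linearly regular by Hoffman's error bound for finitely many polyhedra (cf.\ \cite{BB96}). Combining the linear regularity of the two factors with the linear regularity of the pair of their fixed point sets yields linear regularity of $V_i^{\lambda,\mu}$, uniformly over $\lambda,\mu\in[\varepsilon,2-\varepsilon]$, via the composition statement for linearly regular operators in \cite{CRZ18}. Finally, because every $F_i$ is polyhedral, the finite collection $\{F_1,\dots,F_p\}$ is itself linearly regular, and together with the almost cyclic control this makes $\{V_{i_k}\}_{k=0}^\infty$ linearly regular relative to $\Omega=\bigcap_i F_i$ in the sense of \cite[Definition 4.1]{CRZ18}. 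Applying the linear-rate part of \cite[Theorem 6.2]{CRZ18} to the uniformly SQNE, linearly regular sequence $\{V_{i_k}\}$ then gives linear convergence of $x^k$ to some $x^{\ast}\in\Omega$.

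The main obstacle I anticipate is the bookkeeping in the composition and control steps: one must verify that the individual constants $\delta_i$ and the Hoffman constant of the collection $\{F_i\}$ assemble into a \emph{single} modulus uniform over the relaxation parameters and over the almost cyclic sweeps, since otherwise the per-step contraction factors could tend to $1$ and destroy the linear rate. Securing that uniform modulus is exactly where the polyhedral hypothesis does the real work, through the closed range of each $A_i$ and Hoffman's bound, and where care is needed to quote the regularity-of-\emph{sequences} results of \cite{CRZ18} rather than their single-operator analogues.
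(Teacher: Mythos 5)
Your proposal is correct and follows essentially the same route as the paper's proof: uniform SQNE-ness of the $V_{i_k}^{\lambda_k,\mu_k}$, linear regularity of the sequences of relaxed Landweber and projection operators, linear regularity of the polyhedral families $\{C_i,A_i^{-1}(Q_i)\}$ and $\{F_i\}$ via \cite[Corollary 5.26]{BB96}, composition via \cite[Corollary 5.5(iii)]{CRZ18}, and the conclusion from \cite[Theorem 6.2(iii)]{CRZ18}. The only difference is that you derive the linear regularity of $T_{i,\lambda}$ and $U_{i,\mu}$ by hand from the smallest nonzero singular value, where the paper simply cites \cite[Proposition 4.7]{CRZ18}.
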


\begin{proof}
Similarly as in the proof of Theorem \ref{t-1}, $V_{i_{k}}^{\lambda _{k},\mu
_{k}}$ is $\rho _{k}$-SQNE, where $\rho _{k}\geq \varepsilon
/(4-2\varepsilon )>0$. Moreover, the sequences $\{T_{i,\lambda
_{n_{k}^{i}}}\}_{k=0}^{\infty }$ and $\{U_{i,\mu
_{n_{k}^{i}}}\}_{k=0}^{\infty }$ are linearly regular \cite[Proposition 4.7]%
{CRZ18}. Because, $C_{i}\ $and $A_{i}^{-1}(Q_{i})$ are polyhedral sets, the
family $\{C_{i},A_{i}^{-1}(Q_{i})\}$ is linearly regular, $i\in I$ \cite[%
Corollary 5.26]{BB96}. Thus, the sequences $\{V_{n_{k}^{i}}\}_{k=0}^{\infty }
$, $i\in I$, are linearly regular \cite[Corollary 5.5(iii)]{CRZ18} (for a
definition of a linearly regular sequence of operators, see \cite[Definition
4.1]{CRZ18}). Because $F_{i}:=C_{i}\cap A_{i}^{-1}(Q_{i})$, $i\in I$, are
polyhedral sets, the family $\{F_{i}:i\in I\}$ is linearly regular \cite[%
Corollary 5.26]{BB96}. Now \cite[Theorem 6.2(iii)]{CRZ18} yields the linear
convergence of $x^{k}$ to $x^{\ast }\in \Omega $.
\end{proof}

We end this section by considering the case $N\neq \textrm{Id}$.
For ease of notation  we only consider the simultaneous case.
Let $A:\mathcal{H}_{1}\rightarrow \mathcal{H}_{2}$ be a bounded linear
operator and $b\in \mathcal{H}_{2}$. Define $T_{A,b}:\mathcal{H}%
_{1}\rightarrow \mathcal{H}_{1}$ by
\begin{equation}
T_{A,b}(x)=x+\frac{1}{\Vert A\Vert ^{2}}A^{\ast }(b-Ax)\text{,}
\end{equation}%
$x\in \mathcal{H}_{1}$. The operator $T_{A,b}$
is firmly nonexpansive  (\cite[Definition 2.2.1]{C2013}),
 and \newline $\limfunc{Fix}%
T_{A,b}=\{x\in \mathcal{H}_{1}:A^{\ast }(b-Ax)=0\}$, see, e.g., \cite[Lemma
4.6.2 and Theorem 4.6.3]{C2013}. In particular, if $A^{-1}(\{b\})\neq
\emptyset $ then $\limfunc{Fix}T_{A,b}=A^{-1}(\{b\})$ \cite[Lemma 3.1]{WX11}.%

Let $N:\mathcal{H}_{1}\rightarrow \mathcal{H}_{1}$ and $M:\mathcal{H}%
_{2}\rightarrow \mathcal{H}_{2}$ be strongly positive symmetric operators
and $N^{\frac{1}{2}}$ and $M^{\frac{1}{2}}$ their square roots. It is well
known that $N^{\frac{1}{2}}$ and $M^{\frac{1}{2}}$ are also strongly
positive definite symmetric operators \cite[page 618]{Zei90}. Let $\tilde{A}%
:=M^{\frac{1}{2}}AN^{\frac{1}{2}}$ and $\hat{b}:=M^{\frac{1}{2}}b$. Define $%
\hat{T}:=T_{\tilde{A},\hat{b}\,}$, i.e.,

\[
\hat{T}(x)=x+\frac{1}{\Vert \tilde{A}\Vert ^{2}}\tilde{A}^{\ast }
(\hat{b}-\tilde{A}%
x)=x+\frac{1}{\Vert M^{\frac{1}{2}}AN^{\frac{1}{2}}\Vert ^{2}}N^{\frac{1}{2}%
}A^{\ast }M(b-AN^{\frac{1}{2}}x)\text{.}
\]%
Clearly, $\hat{T}$ is firmly nonexpansive and
\[
\limfunc{Fix}\hat{T}=\{x\in \mathcal{H}_{1}:\hat{A}^{\ast }(\hat{b}-\hat{A}%
x)=0\}=\{x\in \mathcal{H}_{1}:N^{\frac{1}{2}}A^{\ast }M(b-AN^{\frac{1}{2}%
}x)=0\}\text{.}
\]

Let $\hat{T}_{\lambda }$ denote the $\lambda $-relaxation of $\hat{T}$,
where $\lambda \in \lbrack 0,2]$, and recall the definition of an averaged
operator (\cite[Definition 2.2.16]{C2013}).
\begin{proposition}
The operator $V:\mathcal{H}_{1}\rightarrow \mathcal{H}_{1}$ defined by $%
V=P_{C}\hat{T}_{\lambda }$, where $\lambda \in (0,2)$, is $\frac{2}{%
4-\lambda }$-averaged.
\end{proposition}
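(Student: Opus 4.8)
The plan is to realize $V=P_{C}\hat{T}_{\lambda}$ as the composition of two averaged operators whose averagedness constants are known explicitly, and then to read off the constant for the composition from the standard averagedness calculus.

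First I would identify the two factors and their constants. The metric projection $P_{C}$ onto a closed convex set is firmly nonexpansive, hence $\frac{1}{2}$-averaged. The operator $\hat{T}$ was shown just above to be firmly nonexpansive, so it too is $\frac{1}{2}$-averaged; writing $\hat{T}=\frac{1}{2}\limfunc{Id}+\frac{1}{2}R$ with $R=2\hat{T}-\limfunc{Id}$ nonexpansive, its $\lambda$-relaxation becomes $\hat{T}_{\lambda}=(1-\lambda)\limfunc{Id}+\lambda\hat{T}=(1-\frac{\lambda}{2})\limfunc{Id}+\frac{\lambda}{2}R$. Since $\frac{\lambda}{2}\in(0,1)$ for $\lambda\in(0,2)$, this exhibits $\hat{T}_{\lambda}$ as $\frac{\lambda}{2}$-averaged.

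Next I would apply the composition rule for averaged operators (as recorded, e.g., in \cite{C2013}): if $T_{1}$ is $\alpha_{1}$-averaged and $T_{2}$ is $\alpha_{2}$-averaged, then $T_{1}T_{2}$ is $\alpha$-averaged with
\[
\alpha=\frac{\alpha_{1}+\alpha_{2}-2\alpha_{1}\alpha_{2}}{1-\alpha_{1}\alpha_{2}}.
\]
Substituting $\alpha_{1}=\frac{1}{2}$ for $P_{C}$ and $\alpha_{2}=\frac{\lambda}{2}$ for $\hat{T}_{\lambda}$, the numerator collapses to $\frac{1}{2}$ and the denominator to $\frac{4-\lambda}{4}$, yielding $\alpha=\frac{2}{4-\lambda}$, exactly as claimed. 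I would also check that $\frac{2}{4-\lambda}\in(0,1)$ for $\lambda\in(0,2)$, which holds since then $4-\lambda>2$.

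The argument is short, and the only nontrivial ingredient is the composition formula for averaged operators; everything else is elementary bookkeeping. Accordingly, the main point to get right is the correct averagedness constant of each factor — in particular that relaxing a firmly nonexpansive operator by $\lambda$ produces a $\frac{\lambda}{2}$-averaged operator rather than a $\lambda$-averaged one. A convenient sanity check is the case $\lambda=1$: then $V=P_{C}\hat{T}$ is a composition of two firmly nonexpansive maps, and the formula returns the familiar value $\frac{2}{3}=\frac{2}{4-1}$.
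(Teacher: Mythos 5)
Your proof is correct and follows essentially the same route as the paper: both decompose $V$ into the firmly nonexpansive (i.e.\ $\tfrac12$-averaged) factor $P_{C}$ and the $\lambda$-relaxed firmly nonexpansive (i.e.\ $\tfrac{\lambda}{2}$-averaged) factor $\hat{T}_{\lambda}$, and then invoke the composition rule for averaged operators from \cite{C2013}; the paper merely phrases the intermediate step in the equivalent language of ``$\tfrac{4}{4-\lambda}$-relaxed firmly nonexpansive'' before converting to the averagedness constant $\tfrac{2}{4-\lambda}$. Your explicit arithmetic with the formula $\alpha=\frac{\alpha_{1}+\alpha_{2}-2\alpha_{1}\alpha_{2}}{1-\alpha_{1}\alpha_{2}}$ and the $\lambda=1$ sanity check are both accurate.
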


\begin{proof}
$V$ is $\frac{4}{4-\lambda }$-relaxed firmly nonexpansive being a
  composition
of a firmly nonexpansive operator $P_{C}$ and a $\lambda $-relaxed firmly nonexpansive
operator $\hat{T}_{\lambda }$ \cite[Theorem 2.2.37]{C2013}. Consequently, $V$
is $\frac{2}{4-\lambda }$-averaged \cite[Corollary 2.2.17]{C2013}.
\end{proof}

Consider now the iterates generated by $x^{k+1}=Vx^{k}.$
It follows that
$x^{k}\rightharpoonup x^{*}\in \limfunc{Fix}V$ provided $\limfunc{Fix}V
\neq \emptyset.$

\section{P-BIM and its noise error}\label{SIRT:Sec}
 In this section we will study
how errors in the unperturbed
right-hand-side
$\bar{b}$ affect the iterates in P-BIM. To this end let
\begin{equation}
  b=\bar{b}+\delta b.
\end{equation}
The noise will apart from the iterates also  possibly affect the relaxation
parameters (depending on their definition).
To cope with possible rank-deficiency we will, similarly as in
\cite{EHN2012,NK2015}
add a regularization term $\alpha\|x\|^{2}$.
%
Let
\begin{equation} \label{uk}
u^k(x,b^{[k]})= A_{[k]}^T M_{[k]}\left(b^{[k]}-A_{[k]} x\right)-\alpha x,
\end{equation}
where the last term is due to regularization.
Next we introduce the notations
\begin{equation}
\theta_k=\frac{\lambda_k}{\|\hat{A}_{[k]}\|^{2}+\alpha}, \
\bar{\theta}_k=\frac{\bar{\lambda}_k}{\|\hat{A}_{[k]}\|^{2}+\alpha}.
\end{equation}
We will in the sequel refer to
$\theta_k$ and $\bar{\theta}_k$ as the noisy and noise-free relaxation parameter,
respectively. Note that these are just a scaled version of the original
relaxation parameters $\lambda_k$ and $\bar{\lambda}_k$. For noise-free data
$\lambda_k=\bar{\lambda}_k$. Now the noise-free P-BIM becomes
\begin{equation} \label{P-BIM1}
\bar{x}^{k+1}=P_{C_{[k]}}\left(\bar{x}^{k}+
\bar{\theta}_k u^{k}(\bar{x}^{k},\bar{b}^{[k]})
\right).
\end{equation}
The noisy version of P-BIM is
\begin{equation} \label{P-BIMN}
x^{k+1}=P_{C_{[k]}}\left(x^{k}+\theta_k u^{k}(x^{k},b^{[k]})\right).
\end{equation}

Next define
\begin{equation}\label{def1}
\hat{\delta}=\max_{1\leq t\leq p}\|A_{t}^T M_{t}\delta b^{t}\|,
\end{equation}
\begin{equation}
\gamma_{k}=\max_{0\leq j\leq k} |\bar{\theta}_{j}-\theta_{j}|, \
\underline{\eta}_k= \min_{0\leq j\leq k} \theta_{j}, \
 \overline{\eta}_k= \max_{0\leq j\leq k} \theta_{j}.
\end{equation}

Let $\underline{\sigma}_t$ be the smallest \textit{nonzero} singular value
of $M_t^{1/2}A_{t}$, and define
\begin{equation} \label{singval}
\underline{\sigma}=\min_{1\leq t\leq p}\underline{\sigma}_t, \quad
\overline{\sigma}=\max_{1\leq t\leq p}\|M_t^{1/2}A_t\|,
\end{equation}
and
\begin{equation}\label{hat_u}
\hat{u}_k=\max_{0\leq s\leq k}\|u^{s}(\bar{x}^{s},\bar{b}^{[s]})\|.
\end{equation}

\begin{theorem}\label{upper:bound:block}
Assume that $\alpha=\underline{\sigma}^{2}$.
Then the  noise-error in P-BIM 
is bounded above by
\begin{equation}\label{UpperBl}
e^{k}_{N}:=\|x^{k}-\bar{x}^{k}\|\leq\frac{1}{\underline{\sigma}}
\left(\frac{\gamma_{k-1}\hat{u}_{k-1}	+
\overline{\eta}_{k-1}\hat{\delta}}{\underline{\eta}_{k-1}}\right)\Psi^{k}
(\underline{\sigma},\underline{\eta}_{k-1}),
\end{equation}
where $\Psi^{k}(x,y)$ is 
 defined by
\begin{equation}\label{P:si}
\Psi^{k}(x,y)\equiv \frac{1-(1-yx^{2})^{k}}{x}.
\end{equation}
\end{theorem}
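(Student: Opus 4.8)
The plan is to track the error vector $\delta^{k}:=x^{k}-\bar{x}^{k}$ through the recursion, starting from $\delta^{0}=0$ (both iterations share the starting point $x^{0}$), and to reduce the vector recursion to a scalar one of the form $\|\delta^{k+1}\|\le r\|\delta^{k}\|+C$ whose unrolling produces the geometric sum hidden inside $\Psi^{k}$. First I would use that $P_{C_{[k]}}$ is nonexpansive to discard the projection, so that from \eqref{P-BIM1} and \eqref{P-BIMN},
\[
\|\delta^{k+1}\|\le\left\|\bigl(x^{k}+\theta_{k}u^{k}(x^{k},b^{[k]})\bigr)-\bigl(\bar{x}^{k}+\bar{\theta}_{k}u^{k}(\bar{x}^{k},\bar{b}^{[k]})\bigr)\right\|.
\]
The central algebraic step is to add and subtract $\theta_{k}u^{k}(\bar{x}^{k},\bar{b}^{[k]})$ and to expand $u^{k}$ from \eqref{uk}. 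Writing $H_{[k]}=A_{[k]}^{T}M_{[k]}A_{[k]}$ and $\delta b^{[k]}=b^{[k]}-\bar{b}^{[k]}$, this splits the right-hand side into three pieces,
\[
\delta^{k+1}=B_{k}\delta^{k}+\theta_{k}A_{[k]}^{T}M_{[k]}\delta b^{[k]}+(\theta_{k}-\bar{\theta}_{k})\,u^{k}(\bar{x}^{k},\bar{b}^{[k]}),
\]
where $B_{k}=\limfunc{Id}-\theta_{k}(H_{[k]}+\alpha\limfunc{Id})$ is the (symmetric) linear iteration matrix, the second term is the propagated data noise, and the third is the mismatch between the noisy and noise-free relaxation parameters.

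The main step is the spectral bound on $B_{k}$. Since $H_{[k]}=\hat{A}_{[k]}^{T}\hat{A}_{[k]}$ is symmetric positive semidefinite, the eigenvalues of $B_{k}$ are $1-\theta_{k}(\sigma_{i}^{2}+\alpha)$, where the $\sigma_{i}$ are the singular values of $\hat{A}_{[k]}=M_{[k]}^{1/2}A_{[k]}$. The choice $\alpha=\underline{\sigma}^{2}$ is used precisely here: it makes the smallest eigenvalue of $H_{[k]}+\alpha\limfunc{Id}$ equal to $\underline{\sigma}^{2}$ (attained on the null space of $A_{[k]}$), so that the regularized operator is bounded below and $B_{k}$ contracts in every direction, with largest eigenvalue $1-\theta_{k}\underline{\sigma}^{2}<1$. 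Because $\theta_{k}(\|\hat{A}_{[k]}\|^{2}+\alpha)=\lambda_{k}\le 2-\varepsilon$, every eigenvalue of $B_{k}$ also exceeds $1-\lambda_{k}>-1$, and I would conclude $\|B_{k}\|\le 1-\theta_{k}\underline{\sigma}^{2}$.

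Given this bound, the three pieces are estimated by the quantities introduced before the theorem: $\|B_{k}\delta^{k}\|\le(1-\theta_{k}\underline{\sigma}^{2})\|\delta^{k}\|$, then $\theta_{k}\|A_{[k]}^{T}M_{[k]}\delta b^{[k]}\|\le\overline{\eta}_{k-1}\hat{\delta}$ by \eqref{def1}, and $|\theta_{k}-\bar{\theta}_{k}|\,\|u^{k}(\bar{x}^{k},\bar{b}^{[k]})\|\le\gamma_{k-1}\hat{u}_{k-1}$ by \eqref{hat_u}. For all indices $j\le k-1$ one has $\theta_{j}\ge\underline{\eta}_{k-1}$, so the scalar recursion $\|\delta^{j+1}\|\le r\|\delta^{j}\|+C$ holds with the uniform contraction factor $r=1-\underline{\eta}_{k-1}\underline{\sigma}^{2}$ and the uniform constant $C=\gamma_{k-1}\hat{u}_{k-1}+\overline{\eta}_{k-1}\hat{\delta}$. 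Unrolling from $\delta^{0}=0$ gives the finite geometric sum $\|\delta^{k}\|\le C\sum_{j=0}^{k-1}r^{j}=C\,\frac{1-r^{k}}{1-r}$, and substituting $1-r=\underline{\eta}_{k-1}\underline{\sigma}^{2}$ together with $\frac{1-r^{k}}{\underline{\sigma}}=\Psi^{k}(\underline{\sigma},\underline{\eta}_{k-1})$ reproduces \eqref{UpperBl} exactly.

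I expect the spectral bound $\|B_{k}\|\le 1-\theta_{k}\underline{\sigma}^{2}$ to be the one genuinely delicate point. The factor $1-\theta_{k}\underline{\sigma}^{2}$ comes from the near-null-space direction, whereas the top singular direction contributes the eigenvalue $1-\lambda_{k}$, whose modulus $\lambda_{k}-1$ can in principle exceed $1-\theta_{k}\underline{\sigma}^{2}$ when $\lambda_{k}$ is close to $2$ and the block is well conditioned; one must verify that the near-$1$ eigenvalue governs $\|B_{k}\|$, which is automatic in the ill-conditioned regime $\underline{\sigma}^{2}\ll\|\hat{A}_{[k]}\|^{2}$ relevant to the tomographic application. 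The remaining work---the $u^{k}$ expansion and the geometric-series bookkeeping with the $\min$/$\max$ definitions of $\underline{\eta}_{k-1},\overline{\eta}_{k-1},\gamma_{k-1},\hat{u}_{k-1}$---is routine.
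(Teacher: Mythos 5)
Your proposal is correct and takes essentially the same route as the paper's own proof: nonexpansiveness of $P_{C_{[k]}}$, the three-term decomposition with the iteration matrix $Q_k=(1-\theta_k\alpha)\limfunc{Id}-\theta_kA_{[k]}^TM_{[k]}A_{[k]}$, the spectral identity $\|Q_k\|=1-\theta_k\underline{\sigma}^2$ under $\alpha=\underline{\sigma}^2$, and the geometric-series unrolling with the uniform factor $1-\underline{\eta}_{k-1}\underline{\sigma}^2$. The one delicate point you flag---that the eigenvalue $1-\lambda_k$ at the top singular direction could in principle dominate $1-\theta_k\underline{\sigma}^2$---is precisely what the paper outsources to \cite[Lemma 1]{NK2015} via \eqref{q1-form}, so your argument sits at the same level of rigor as the published one.
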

\begin{proof}
 Since $P_{C}$ is  nonexpansive we get
\begin{align}
e^{k}_{N}
&\leq  \|x^{k-1}+\theta_{k-1}u^{k-1}(x^{k-1},b^{[k]})
-
\left(\bar{x}^{k-1}+
\bar{\theta}_{k-1}u^{k-1}(\bar{x}^{k-1},\bar{b}^{[k]})\right)\|
\nonumber
\\
&=
\|(x^{k-1}-\bar{x}^{k-1})+\theta_{k-1}(u^{k-1}(x^{k-1},b^{[k]})-
u^{k-1}(\bar{x}^{k-1},\bar{b}^{[k]})) \nonumber
\\
&+
(\theta_{k-1}-\bar{\theta}_{k-1})u^{k-1}(\bar{x}^{k-1}, \bar{b}^{[k]})\|.
\nonumber
\end{align}
Now
\begin{align}
u^{k-1}(x^{k-1},b^{[k]})-u^{k-1}(\bar{x}^{k-1},\bar{b}^{[k]})&= A_{[k-1]}^T M_{[k-1]}\left(b^{[k-1]}-A_{[k-1]} x^{k-1}\right)\nonumber \\
&\quad -A_{[k-1]}^T M_{[k-1]}\left(\bar{b}^{[k-1]}-A_{[k-1]} \bar{x}^{k-1}\right)\nonumber \\
&\quad  +\alpha(\bar{x}^{k-1}-x^{k-1}) \nonumber \\
&= A_{[k-1]}^T M_{[k-1]}\delta b^{[k-1]}\nonumber\\
&\quad +(\alpha \textrm{Id}+ A_{[k-1]}^T M_{[k-1]}A_{[k-1]})
(\bar{x}^{k-1}-x^{k-1}). \nonumber
\end{align}
Hence
\begin{eqnarray}
e^{k}_{N} &\leq&
\|\left((1-\theta_{k-1}\alpha)\textrm{Id}-\theta_{k-1}A_{[k-1]}^T M_{[k-1]}A_{[k-1]}\right)
(x^{k-1}-\bar{x}^{k-1}) \nonumber \\
&+&
\theta_{k-1}A_{[k-1]}^T M_{[k-1]}\delta b^{[k-1]}+
(\theta_{k-1}-\bar{\theta}_{k-1})
u^{k-1}(\bar{x}^{k-1}, \bar{b}^{[k-1]})\|.
\nonumber
\end{eqnarray}
Here
\begin{equation}
Q_{k}=\left((1-\theta_{k}\alpha)\textrm{Id}-
\theta_{k} A_{[k]}^{T}M_{[k]}A_{[k]}\right), \quad
q_{k}=\|Q_{k}\|.
\end{equation}

Then it follows
\[
e^{k}_{N} \leq
 q_{k-1} e_N^{k-1} + |\theta_{k-1}-\bar{\theta}_{k-1}|.
\|u^{k-1}(\bar{x}^{k-1},\bar{b}^{[k-1]})\|+\theta_{k-1} \hat{\delta}.
\]

 Assuming $e^{0}_N=x^{0}-\bar{x}^{0}=0$
it follows by induction
\begin{align}
e^{k}_{N}&\leq \sum^{k-2}_{s=0}|\theta_{s}-\bar{\theta}_{s}|.\|u^{s}(\bar{x}^{s},\bar{b}^{[s]})
\|\prod_{j=s+1}^{k-1}q_{j}\nonumber\\
&+ \hat{\delta} \sum^{k-2}_{s=0}\theta_{s}\prod_{j=s+1}^{k-1}q_{j} +
|\theta_{k-1}-\bar{\theta}_{k-1}|.\|u^{k-1}(\bar{x}^{k-1},\bar{b}^{[k-1]})\| +
\theta_{k-1} \hat{\delta}.
\nonumber \end{align}
With (\ref{hat_u}), and putting $\hat{q}_k=\max_{0\leq j\leq k}q_j$ we therefore get
\begin{align}
e^{k}_{N}&\leq (\gamma_{k-2}\hat{u}_{k-2}+\bar{\eta}_{k-2}\hat{\delta})
\sum^{k-2}_{s=0}\prod_{j=s+1}^{k-1}\hat{q}_{k-1}
+(\gamma_{k-1}\hat{u}_{k-1}+\bar{\eta}_{k-1}\hat{\delta}).
\end{align}
Now
\begin{equation} \label{trivb}
\sum^{k-2}_{s=0}
\prod_{j=s+1}^{k-1}\hat{q}_{k-1} =
\left(\hat{q}_{k-1}^{k-1}+\hat{q}_{k-1}^{k-2}+\ldots+\hat{q}_{k-1}^{2}+
\hat{q}_{k-1}^{1}\right).
\end{equation}
It follows from the properties of geometric progression that,
\begin{equation*}
e^{k}_{N}\leq \left(\gamma_{k-1}\hat{u}_{k-1}+\overline{\eta}_{k-1}\delta\right)
\frac{1-\hat{q}_{k-1}^{k}}{1-\hat{q}_{k-1}}.
\end{equation*}
We will now use the following result, derived
 in
\cite[Lemma 1]{NK2015}, for block-iteration, and \cite[Lemma 3.9]{EHN2012}
for simultaneous iteration,
\begin{equation}\label{q1-form}
\|Q_k\|=1-\theta_k\underline{\sigma}^2,
\textrm{ assuming } \alpha=\underline{\sigma}^2.
\end{equation}
Using (\ref{q1-form}) it holds
$\hat{q}_{k-1}=\max_{0\leq j\leq k-1}(1-\theta_j\underline{\sigma}^{2})
=1-\underline{\eta}_{k-1}\underline{\sigma}^{2}.$
It follows
$$
e^{k}_{N}\leq 
\left(\gamma_{k-1}\hat{u}_{k-1}+
\overline{\eta}_{k-1}\hat{\delta}\right )
\frac{1-(1-\underline{\eta}_{k-1}\underline{\sigma}^{2})
^{k}}{\underline{\eta}_{k-1}\underline{\sigma}^{2}}.
$$
Therefore the result follows using (\ref{P:si}).
\end{proof}

\begin{remark}
In the proof of Theorem \ref{upper:bound:block}, we assumed cyclic control. However, provided the same control sequence $\{i_k\}$ (where $1\leq i_k \leq p$) is used in both the noise-free (\ref{P-BIM1}), and the noisy (\ref{P-BIMN}) iteration it is obvious that Theorem \ref{upper:bound:block} also holds.  Examples of other controls are almost cyclic control as defined above and stochastic control \cite[Definition 5.1]{stoch2021}. Stochastic  control has proven to be quite successful and we refer to \cite{RPT2020,Ne2019,NP2014} for some recent developments.
\end{remark}

\begin{remark}
When the regularization parameter $\alpha$ is chosen positive
the convergence of P-BIM follows from the contraction mapping theorem
as follows.
By (\ref{q1-form}) $Q_k$ is a $\rho_k$-contraction with
$\rho_k=1-\theta_k\underline{\sigma}^{2}.$
Since $P_C$ is a nonexpansive it follows that $U_k$ (defined in Algorithm \ref{BIP:algt})
also is a contraction.
Because $\rho_k<\rho<1,$ it follows that $U_k$ is a
$\rho-$contraction. Thus, assuming feasibility, $x^{k}$ converges to
$x^{*}\in \cap_{k=1}^{\infty} \limfunc{Fix}U_k$.
\end{remark}

\begin{remark}
If $\lambda_k=\bar{\lambda}_k$, i.e. the relaxation parameters do not depend
on the noise, then $\gamma_k=0$, and
$\underline{\eta}_k=\min_{0\leq j \leq k} \theta_{j}, \overline{\eta}_k=\max_{0\leq j \leq k} \theta_{j}$
so that
\begin{equation}\label{UpperNew}
  \|x^k-\bar{x}^k\|\leq \frac{1}{\underline{\sigma}}\left(\frac{\overline{\eta}_{k-1}\hat{\delta}}{\underline{\eta}_{k-1}}\right)\Psi^{k}(\underline{\sigma},\underline{\eta}_{k-1})
\end{equation}
This bound, assuming decreasing relaxation parameters $\theta_{k}$,
coincides with the bound given in \cite[Theorem 3]{NK2015}.

Next we remind the reader of the inequality
$(1-(1-x^2)^k)/x \leq \sqrt{k}, x \in (0,1)$.
It follows
\begin{equation*}
  \Psi^{k}(\underline{\sigma},\underline{\eta}_{k-1})=\frac{1-(1-\underline{\eta}_{k-1}\underline{\sigma}^{2})^{k}}{\sqrt{\underline{\eta}_{k-1}}\underline{\sigma}}
\sqrt{\underline{\eta}_{k-1}}\leq \sqrt{\underline{\eta}_{k-1}}\sqrt{k},
\end{equation*}
provided $\underline{\sigma}\leq 1/\underline{\eta}_{k-1}$. Hence we arrive at the bound (with $c$ a small constant)

\begin{equation}
\|x^{k}-\bar{x}^{k}\|\leq \frac{c}{\underline{\sigma}}\sqrt{k},~~~~~~\underline{\sigma}\leq 1/\underline{\eta}_{k-1},
\end{equation}
cf \cite{Elfving2014,KL-14} for Kaczmarz’s method and \cite{engl1996regularization} for SIRT. Due to the factor $1/\underline{\sigma}$
this bound is unrealistically large, but we have not been able to sharpen it. See also \cite{elfving2018row} for a more detailed discussion.
\end{remark}

We next give an alternative upper bound for the noise error presented in
Theorem \ref{upper:bound:block}, when the relaxation parameters are
decreasing, i.e. $0< \theta_{k+1}\leq \theta_{k}$.
At first, following \cite[Propositions 2.3, 2.4]{elfving2010}, we consider
\begin{equation}\label{zeta:equ}
g_{k-1}(y)=(2k-1)y^{k-1}-(y^{k-2}+\cdots+y+1)
\end{equation}
which has a unique root $\zeta_{k} \in (0,1)$ and satisfies $0 < \zeta_{k} < \zeta_{k+1} < 1$
and $\lim_{k\to \infty}\zeta_{k} = 1$.
\begin{theorem}\label{thm:dec:relax}
If the relaxation parameters are decreasing and satisfy
\begin{equation}\label{lambda:less:sigma}
 0< \theta_{k}\leq \frac{1}{\|M_t^{1/2}A_{t}\|^{2}},
\end{equation}
then we have
\begin{equation}\label{decreasing:relaxation}
\|x^{k}-\bar{x}^{k}\|\leq\displaystyle\frac{1}
{\underline{\sigma}}\left(\frac{(\theta_{0}-\theta_{k-1})
\hat{u}_{k-1}+\theta_{0}\hat{\delta}}
{{\sqrt{\theta_{k-1}}}}\right)\frac{1-{\zeta}^{k}_{k}}{\sqrt{1-\zeta_{k}}}.
\end{equation}
\end{theorem}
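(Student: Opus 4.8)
The plan is to obtain (\ref{decreasing:relaxation}) as a sharpening of the bound (\ref{UpperBl}) already established in Theorem~\ref{upper:bound:block}, now taking advantage of the monotonicity of the relaxation parameters; there is no need to redo the induction, only to re-estimate the two $k$-dependent factors. Since $\{\theta_j\}$ is decreasing, for $0\le j\le k-1$ one has $\underline{\eta}_{k-1}=\theta_{k-1}$ and $\overline{\eta}_{k-1}=\theta_0$, so (\ref{UpperBl}) together with (\ref{P:si}) reads
\[
e^{k}_{N}\le \frac{1}{\underline{\sigma}}\cdot\frac{\gamma_{k-1}\hat{u}_{k-1}+\theta_0\hat{\delta}}{\theta_{k-1}}\cdot\frac{1-(1-\theta_{k-1}\underline{\sigma}^{2})^{k}}{\underline{\sigma}}.
\]

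Next I would set $y:=1-\theta_{k-1}\underline{\sigma}^{2}$. Hypothesis (\ref{lambda:less:sigma}) gives $\theta_{k-1}\underline{\sigma}^{2}\le\theta_{k-1}\overline{\sigma}^{2}\le 1$, so $y\in[0,1)$, and the identity $\theta_{k-1}\underline{\sigma}^{2}=1-y$ yields $\underline{\sigma}\sqrt{\theta_{k-1}}=\sqrt{1-y}$. Substituting this into the factors $1/\theta_{k-1}$ and $1/\underline{\sigma}$ rewrites the last display as
\[
e^{k}_{N}\le \frac{1}{\underline{\sigma}}\cdot\frac{\gamma_{k-1}\hat{u}_{k-1}+\theta_0\hat{\delta}}{\sqrt{\theta_{k-1}}}\cdot\frac{1-y^{k}}{\sqrt{1-y}}.
\]

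Two estimates then finish the proof. First, since both the noisy and the noise-free relaxation sequences are decreasing and lie in $[\theta_{k-1},\theta_0]$, each difference satisfies $|\theta_j-\bar{\theta}_j|\le\theta_0-\theta_{k-1}$, whence $\gamma_{k-1}\le\theta_0-\theta_{k-1}$; inserting this produces the numerator $(\theta_0-\theta_{k-1})\hat{u}_{k-1}+\theta_0\hat{\delta}$ appearing in (\ref{decreasing:relaxation}). Second, and this is the main obstacle, I would bound the $y$-factor by its worst case,
\[
\frac{1-y^{k}}{\sqrt{1-y}}\le\frac{1-\zeta_{k}^{k}}{\sqrt{1-\zeta_{k}}},\qquad y\in[0,1),
\]
with $\zeta_k$ the root of $g_{k-1}$ from (\ref{zeta:equ}). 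This is exactly where \cite[Propositions 2.3, 2.4]{elfving2010} are used: maximizing the left side is the same as maximizing $(1-y^{k})^{2}/(1-y)$ on $(0,1)$, and differentiation shows its stationarity condition reduces precisely to $g_{k-1}(y)=0$; since this has the unique root $\zeta_k\in(0,1)$, while the expression equals $1$ at $y=0$ and tends to $0$ as $y\to 1^{-}$, the value at $\zeta_k$ is the global maximum. Applying the inequality at $y=1-\theta_{k-1}\underline{\sigma}^{2}$ and combining with the $\gamma_{k-1}$ estimate gives (\ref{decreasing:relaxation}). The delicate points are verifying that the critical point of $(1-y^{k})^{2}/(1-y)$ is the global maximizer on $(0,1)$ and coincides with $\zeta_k$, and checking via (\ref{lambda:less:sigma}) that $y$ genuinely lands in $[0,1)$ so that this maximum is applicable.
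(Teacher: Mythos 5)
Your proposal is correct and follows essentially the same route as the paper: specialize the factors in \eqref{UpperBl} using monotonicity ($\underline{\eta}_{k-1}=\theta_{k-1}$, $\overline{\eta}_{k-1}=\theta_0$, $\gamma_{k-1}\le\theta_0-\theta_{k-1}$) and then invoke the maximization of $\Psi^{k}$ over $\sigma\in(0,1/\sqrt{\theta_{k-1}}]$, which your substitution $y=1-\theta_{k-1}\underline{\sigma}^{2}$ turns into maximizing $(1-y^{k})^{2}/(1-y)$ on $[0,1)$. The only difference is that the paper simply cites \cite[Propositions 2.3, 2.4]{elfving2010} for this maximum, whereas you re-derive it by checking that the stationarity condition is exactly $g_{k-1}(y)=0$ from \eqref{zeta:equ} — a harmless and correct addition.
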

\begin{proof}
It holds
$\underline{\eta}_{k-1}=\min \theta_s=\theta_{k-1}$,
$\overline{\eta}_{k-1}=\max \theta_s=\theta_{0}$ and
$\gamma_{k-1}\leq|\theta_{0} - \theta_{k-1}|$. Based on
\cite[Proposition 2.4]{elfving2010}, we have
\begin{equation}\label{max:Psi}
\displaystyle\max_{0<\sigma\leq 1/{\sqrt{\theta_{k-1}}}}
\Psi^{k}(\sigma,\theta_{k})\leq \sqrt{\theta_{k-1}}
\frac{1-\zeta^{k}_{k}}{\sqrt{1-\zeta_{k}}}
\end{equation}
where $\zeta_{k} \in (0,1)$ is the unique root of (\ref{zeta:equ}).
By applying (\ref{max:Psi}) in (\ref{UpperBl}) we obtain
(\ref{decreasing:relaxation}).
\end{proof}

\section{Relaxation parameters}\label{relaxation:par}

%
In \cite{NK2015} 
three step-size rules for
use in block iterative methods were studied
(the first two  were originally proposed in
\cite{elfving2010}, and in \cite{EHN2012} (constrained case)
for simultaneous iteration). The main idea with these rules is to control
the propagated noise component of the error. Here we use the formulation
(\ref{P-BIM1}) for P-BIM (to conform with the notations in the preceding
Section and in \cite{elfving2010}, \cite{EHN2012}).
%
 %
\begin{equation}\label{Psi1:Block}
\Psi_{1}-\textrm{rule}~~~~~~~~~~~ \theta_{k} = \left\{ \begin{array}{ll}
\displaystyle \sqrt{2}\overline{\sigma}^{-2},& ~~~~~~~~~~ \textrm{for}~~ k=0,1\vspace{0.1cm} \\
\displaystyle 2\overline{\sigma}^{-2}(1-\zeta_{k}), & ~~~~~~~~~~\textrm{for}~~ k\geq 2,
\end{array} \right.
\end{equation}
\begin{equation}\label{Psi2:Block}
\Psi_{2}-\textrm{rule}~~~~~~~~~~~ \theta_{k} = \left\{ \begin{array}{ll}
\displaystyle \sqrt{2}\overline{\sigma}^{-2},& ~~~~~~~~~~ \textrm{for}~~ k=0,1\vspace{0.1cm} \\
\displaystyle 2\overline{\sigma}^{-2}\frac{1-\zeta_{k}}{(1-\zeta_{k}^{k})^{2}},
& ~~~~~~~~~~\textrm{for} ~~k\geq 2,
\end{array} \right.
\end{equation}
\begin{equation}\label{Psi3}
\Psi_{3}-\textrm{rule}~~~~~~~~~~~ \theta_{k} = \left\{ \begin{array}{ll}
\displaystyle \sqrt{2}\overline{\sigma}^{-2},& ~~~~~~~~~~ \textrm{for}~~ k=0,1\vspace{0.1cm} \\
\displaystyle 2\overline{\sigma}^{-2}\frac{(1-\zeta_{k}^{k})^{2}}
{(1-\zeta_{k})^{1-r}}, & ~~~~~~~~~~\textrm{for} ~~k\geq 2.
\end{array} \right.
\end{equation}
Here $1<r\leq 2.$
All three rules are descending, i.e.
$0 < \theta_{k+1} < \theta_k, \ k \geq 2$. Also for these three rules the relaxation parameters
do not depend on the noise.
%


The following upper bounds for the noise error were derived in \cite{NK2015}
(for the simultaneous case see also \cite{EHN2012}).

\begin{equation}\label{Psi_upper}
\left\|x^{k}-\bar{x}^{k}   \right\|\leq  \left\{ \begin{array}{ll}
\displaystyle \frac{\beta_{\delta b}}{\underline{\sigma}}
\left(1-\zeta_{k}^{k}\right)/\left(1-\zeta_{k}\right),& ~~~~~~~ \Psi_{1}-\textrm{rule}\vspace{0.1cm} \\
\displaystyle \frac{\beta_{\delta b}}{\underline{\sigma}}\left(1-\zeta_{k}^{k}\right)^{2}/\left(1-\zeta_{k}\right),& ~~~~~~~ \Psi_{2}-\textrm{rule}\vspace{0.1cm} \\
\displaystyle \frac{\beta_{\delta b}}{\underline{\sigma}}\left(1-\zeta_{k}\right)^{\frac{-r}{2}}.& ~~~~~~~ \Psi_{3}-\textrm{rule}\vspace{0.1cm}
\end{array} \right.
\end{equation}
Here
\begin{equation}
\beta_{\delta b}=
\max_{1\leq t\leq p} \|M_{[t]}^{\frac{1}{2}}\delta b^{[t]}\|, \
  \beta_{b}=
\max_{1\leq t\leq p} \|M_{[t]}^{\frac{1}{2}}b^{[t]}\|.
\end{equation}
We next derive a new relaxation parameter rule which will be based
on the bound (\ref{decreasing:relaxation}).
We will then use the following heuristic
 $ \hat{u}_k\leq \hat{u}_0.$ This was fulfilled in all our numerical
experiments (but we have no formal proof). Also we use, for simplicity
$x^{0}=0$ (as in the experiments).
Hence
$$
\hat{u}_k\approx \hat{u}_0\leq \max_{1\leq t \leq p}\|A_tM_tb^{t}\|\leq
\overline{\sigma}\beta_{b}.
$$
Further using (\ref{def1})
$
\hat{\delta}
\leq \overline{\sigma}\beta_{\delta b}.
$
Summarizing we get
\begin{equation} \label{heuristic}
\hat{u}_k\approx  \bar{\sigma}\beta_{b}, \
\hat{\delta}\approx\bar{\sigma}\beta_{\delta b}.
\end{equation}
Again the basic idea is to control the noise error.
Therefore consider the function
\begin{equation}\label{gam}
\frac{\beta_{\delta b}}{\underline{\sigma}}
\left(1-\zeta_{k}^{k}\right)^{\frac{1}{2}}\left(1-\zeta_{k}
\right)^{\frac{-r}{2}}
\end{equation}

We  now enforce that the noise error bound (\ref{decreasing:relaxation})
equals
(\ref{gam}). It follows, also using
(\ref{heuristic})
$$ 
\frac{\overline{\sigma}}{\underline{\sigma}}\left(\frac{(\theta_{0}-
\theta_{k-1})\beta_{b}+\theta_{0}\beta_{\delta b}}
{{\sqrt{\theta_{k-1}}}}\right)\frac{1-{\zeta}^{k}_{k}}{\sqrt{1-\zeta_{k}}}=
\frac{\beta_{\delta b}}{\underline{\sigma}}
\left(1-\zeta_{k}^{k}\right)^{\frac{1}{2}}\left(1-\zeta_{k}
\right)^{\frac{-r}{2}}
$$
After simplification we get
%
$$
\overline{\sigma}
\left(\frac{(\theta_{0}-
\theta_{k-1})\beta_{b}+\theta_{0}\beta_{\delta b}}
{\sqrt{\theta_{k-1}}}\right)=
\beta_{\delta b}\frac{(1-\zeta_{k})^{\frac{1-r}{2}}}
{\sqrt{1-{\zeta}^{k}_{k}} }.
$$
By solving for $\theta_{k-1}$, and using that
 $\theta_0=\sqrt{2}/\overline{\sigma}^{2}$
 one gets (after elementary calculations)
$$\theta_{k-1}=
\frac{\mathcal{B}+\mathcal{Z}_{k,r}^{2}\beta_{\delta b}^{2}   -\mathcal{Z}_{k,r}\beta_{\delta b}
\sqrt{\mathcal{Z}_{k,r}^{2}\beta_{\delta b}^{2}+2\mathcal{B}}}{2\overline{\sigma}^2\beta_{b}^2},
$$
where
$$
\mathcal{B}=2\sqrt{2}\beta_b\left(\beta_{b}+\beta_{\delta b}\right), \
\mathcal{Z}_{k,r}=\frac{(1-\zeta_{k})^{\frac{1-r}{2}}}
{\sqrt{(1-\zeta_{k}^{k})}}.
$$
Summarizing we have the following rule
\begin{equation}\label{Gamma2bar}
\Gamma-\textrm{rule}~~~~~~~~ \theta_{k} = \left\{ \begin{array}{ll}
\displaystyle \frac{\sqrt{2}}{\overline{\sigma}^{2}},& ~~~~~~~ \textrm{for}~~ k=0,1\vspace{0.1cm} \\
\displaystyle \frac{\mathcal{B}+\mathcal{Z}_{k,r}^{2}\beta_{\delta b}^{2}   -\mathcal{Z}_{k,r}\beta_{\delta b} \sqrt{\mathcal{Z}_{k,r}^{2}\beta_{\delta b}^{2}+2\mathcal{B}}}{2\overline{\sigma}^2\beta_{b}^2}, & ~~~~~~~\textrm{for}~~ k\geq 2,
\end{array} \right.
\end{equation}
where $1< r \leq 2$.
Note that the parameters depend on the noise. With no noise present
$\theta_k=\theta_0, \ k\geq 1$.
We mention here the recent paper \cite[Proposition 5]{Bai-Buc}
where also knowledge of $\|\delta b\|$ is utilized.
By construction the noise upper bound for the $\Gamma-$rule
is
\begin{equation}\label{Gamma_upper}
\|x^{k}-\bar{x}^{k}\|\leq
\frac{\beta_{\delta b}}{\underline{\sigma}}
\left(1-\zeta_{k}^{k}\right)^{\frac{1}{2}}
\left(1-\zeta_{k}\right)^{\frac{-r}{2}}, ~~~~~~~ \Gamma-\textrm{rule}.
\end{equation}
Let $||x^{k}-\bar{x}^{k} ||_{\Omega}$ denote the {\it upper}
bound for
the noise error when $\Omega$-rule is used for choosing relaxation parameters.
Then the following inequalities obviusly hold,
\begin{equation}\label{upper:property}
  ||x^{k}-\bar{x}^{k} ||_{\Gamma}\leq ||x^{k}-\bar{x}^{k}||_{\Psi_{3}}
\textrm{ and }
 ||x^{k}-\bar{x}^{k}||_{\Psi_{2}}\leq ||x^{k}-\bar{x}^{k}||_{\Psi_{1}}~~~~~
\textrm{for}~~~ k\geq 0.
\end{equation}
The bounds for the $\Psi_3$ and the $\Psi_2$ rules respectively may intersect
see \cite[Figure 2]{NK2015}.
We next show that the $\Gamma-$rule also is a diminishing step-size strategy,
\begin{lemma} Let $r\in (1,2], \ k\geq 2$.
 The relaxation parameters in the $\Gamma$-rule
are descending, i.e.
$0 < \theta_{k+1} < \theta_k$.
\end{lemma}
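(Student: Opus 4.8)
The plan is to isolate the $k$-dependence. For $k\geq 2$ everything in the $\Gamma$-rule except $\mathcal{Z}_{k,r}$ is a fixed positive constant, so I would write $\theta_k=f(a_k)$ with $a_k=\mathcal{Z}_{k,r}\beta_{\delta b}\geq 0$ and numerator $\mathcal{B}+a^2-a\sqrt{a^2+2\mathcal{B}}$. First I would rationalize: putting $P=a^2+\mathcal{B}\geq\mathcal{B}>0$, one checks $a\sqrt{a^2+2\mathcal{B}}=\sqrt{P^2-\mathcal{B}^2}$, so that
\[
\mathcal{B}+a^2-a\sqrt{a^2+2\mathcal{B}}=P-\sqrt{P^2-\mathcal{B}^2}=\frac{\mathcal{B}^2}{P+\sqrt{P^2-\mathcal{B}^2}}.
\]
This makes both positivity of $\theta_k$ and strict monotonicity transparent: $P$ is strictly increasing in $a$, so $\theta_k$ is a strictly decreasing positive function of $a$. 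Since $\beta_{\delta b}>0$ (noise is present; for $\beta_{\delta b}=0$ the rule reduces to the constant $\theta_0$), $\theta_{k+1}<\theta_k$ is equivalent to $a_{k+1}>a_k$, i.e. to $\mathcal{Z}_{k+1,r}>\mathcal{Z}_{k,r}$. Thus it suffices to prove that $\phi(k):=\mathcal{Z}_{k,r}^{-2}=(1-\zeta_k^{k})(1-\zeta_k)^{r-1}$ is strictly decreasing.

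Next I would rewrite $\phi$ so the root $\zeta_k$ appears explicitly. From $g_{k-1}(\zeta_k)=0$ and the geometric-sum identity $\sum_{j=0}^{k-2}\zeta_k^{j}=(1-\zeta_k^{k-1})/(1-\zeta_k)$ one obtains $(2k-1)\zeta_k^{k-1}(1-\zeta_k)=1-\zeta_k^{k-1}$, hence
\[
\zeta_k^{k-1}=\frac{1}{1+(2k-1)(1-\zeta_k)},\qquad 1-\zeta_k^{k}=2k(1-\zeta_k)\,\zeta_k^{k-1},
\]
so that $\phi(k)=2k(1-\zeta_k)^{r}\zeta_k^{k-1}$. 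The plan is then to show \emph{separately} that $\zeta_k^{k-1}$ and $2k(1-\zeta_k)^{r}$ are both strictly decreasing; being positive, their product $\phi$ will be strictly decreasing. Writing $s_k:=(2k-1)(1-\zeta_k)$, the first factor is $\zeta_k^{k-1}=1/(1+s_k)$, which decreases as soon as $s_k$ is increasing.

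The main obstacle is the quantitative control of the root needed for the second factor. Its strict decrease is equivalent to the per-step estimate $(1-\zeta_{k+1})/(1-\zeta_k)<(k/(k+1))^{1/r}$; since $r>1$ and $1-\zeta_k$ behaves like $c/k$ (so the competing growth of the factor $2k$ is dominated), there is strict slack, but extracting it rigorously from the defining polynomial is the crux. I would handle this by root-localization: $g_k$ has its unique root at $\zeta_{k+1}$ with $g_k(0)=-1<0$ and $g_k(1)=k+1>0$, hence $g_k<0$ on $(0,\zeta_{k+1})$. Therefore, to prove $\zeta_{k+1}>1-\rho(1-\zeta_k)$ with $\rho=(k/(k+1))^{1/r}$ it suffices to verify $g_k\big(1-\rho(1-\zeta_k)\big)<0$; I would substitute this trial point and eliminate $\zeta_k^{k-1}$ via the relation above, reducing the claim to a one-variable inequality in $s_k$, which is feasible because $s_k$ stays bounded. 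The same localization applied to the trial point $1-\tfrac{2k-1}{2k+1}(1-\zeta_k)$ (showing $g_k>0$ there) yields $s_{k+1}>s_k$, giving the monotonicity of the first factor and completing the argument; the two resulting bounds on $(1-\zeta_{k+1})/(1-\zeta_k)$ are compatible precisely because $r>1$.
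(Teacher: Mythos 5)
Your algebraic reductions are correct and in fact cleaner than the paper's. The rationalization $\mathcal{B}+a^{2}-a\sqrt{a^{2}+2\mathcal{B}}=\mathcal{B}^{2}\big/\bigl(P+\sqrt{P^{2}-\mathcal{B}^{2}}\bigr)$ with $P=a^{2}+\mathcal{B}$ gives positivity of $\theta_k$ and strict decrease in $a=\mathcal{Z}_{k,r}\beta_{\delta b}$ in one line, where the paper instead computes $d\theta_k/d\mathcal{Z}_{k,r}$ and rules out the wrong sign by a squaring argument. Likewise the identities $\zeta_k^{k-1}=1/(1+s_k)$ and $1-\zeta_k^{k}=2k(1-\zeta_k)\zeta_k^{k-1}$ extracted from $g_{k-1}(\zeta_k)=0$ are verified correctly, and your factorization $\mathcal{Z}_{k,r}^{-2}=2k(1-\zeta_k)^{r}\,\zeta_k^{k-1}$ is a genuinely useful normal form. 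You have also put your finger on the real difficulty: $\mathcal{Z}_{k,r}$ depends on $k$ both through $\zeta_k$ and explicitly through the exponent in $\zeta_k^{k}$, and the explicit dependence pushes in the \emph{wrong} direction (for fixed $\zeta$, $1-\zeta^{k}$ increases with $k$). The paper's own proof handles this step by showing $d\mathcal{Z}_{k,r}/d\zeta_k>0$ and invoking the chain rule together with $\zeta_k<\zeta_{k+1}$, which addresses only the dependence through $\zeta_k$; your route is the one that would actually close that issue.

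The problem is that you do not close it. The entire content of the lemma, after the elementary algebra, is the inequality $\mathcal{Z}_{k+1,r}>\mathcal{Z}_{k,r}$, and at the decisive point your argument becomes a plan rather than a proof: you reduce it to the two sign conditions $g_k\bigl(1-\rho(1-\zeta_k)\bigr)<0$ with $\rho=(k/(k+1))^{1/r}$ and $g_k\bigl(1-\tfrac{2k-1}{2k+1}(1-\zeta_k)\bigr)>0$, and then assert that substituting and eliminating $\zeta_k^{k-1}$ yields a one-variable inequality in $s_k$ that is ``feasible because $s_k$ stays bounded.'' Neither sign condition is verified. For instance, the second one reduces (after using $\zeta_k^{k-1}=1/(1+s_k)$) to $\bigl(1-\tfrac{s_k}{2k+1}\bigr)^{k}>\bigl(1-\tfrac{s_k}{2k-1}\bigr)^{k-1}$, a genuine competition between a larger exponent and a smaller base that does not follow from any soft argument; the first condition is of the same nature and additionally involves $r$. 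Moreover, the boundedness of $s_k$ that you lean on is itself unproved at that stage (it would follow from the monotonicity you are trying to establish, or must be derived separately, e.g.\ from $(1-x)^{k-1}\le e^{-(k-1)x}$). Numerically both claims appear true ($s_2=2$, $s_3\approx 2.21$, $s_4\approx 2.30$, with $s_k\uparrow s^{*}\approx 2.51$), so the strategy is sound, but as written the proof has a hole exactly where the lemma's difficulty lives.
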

\begin{proof}
  Since $\zeta_{k}\in(0,1)$, we have
  \begin{equation*}
    \frac{d\mathcal{Z}_{k,r}}{d \zeta_{k}}=\left(\frac{r-1}{(1-\zeta_{k})}+
\frac{k \zeta_{k}^{k-1}}{\left(1-\zeta_{k}^{k}\right)}\right)
\frac{(1-\zeta_{k})^{\frac{1-r}{2}}}{2\sqrt{1-\zeta_{k}^{k}}}>0.
  \end{equation*}
  Showing that $\mathcal{Z}_{k,r}(\zeta_k)$ is increasing.
Next
\begin{equation*}
    \frac{d\mathcal{\theta}_{k}}{d \mathcal{Z}_{k,r}}=
\frac{1}{2\overline{\sigma}^2\beta_{b}
\sqrt{\beta_{\delta b}^{2}\mathcal{Z}_{k,r}^{2}+2\mathcal{B}}}
\tau_{k,r},
  \end{equation*}
  with
  \begin{equation*}
    \tau_{k,r}=\beta_{\delta b}\mathcal{Z}_{k,r}
\sqrt{\beta_{\delta b}^{2}\mathcal{Z}_{k,r}^{2}+2\mathcal{B}}-
\left(\mathcal{B}+\beta_{\delta b}^{2}\mathcal{Z}_{k,r}^{2}\right).
  \end{equation*}
We now show that $\tau_{k,r} < 0$. To this end assume the contrary, i.e.
$$
\beta_{\delta b}\mathcal{Z}_{k,r}
\sqrt{\beta_{\delta b}^{2}\mathcal{Z}_{k,r}^{2}+2\mathcal{B}}\geq
\left(\mathcal{B}+\beta_{\delta b}^{2}\mathcal{Z}_{k,r}^{2}\right),
$$
After squaring and simplifying one gets
${\mathcal{B}}^{2}\leq 0,$ a contradiction. Hence
$\frac{d\mathcal{\theta}_{k}}{d \mathcal{Z}_{k,r}}< 0.$
As noted previously $\zeta_k(k)$ is increasing. Therefore the result
follows by the chain rule.
\end{proof}

In Section \ref{Numerical}, in addition to the above rules
also the '$\theta-$opt'  rule is used.
This means finding that constant value of $\theta$
which give rise to the smallest relative error within a fixed number
of cycles ($cmax$). Here a cycle denotes one pass through all
$p$ row-blocks.
This value of $\theta$ is found by searching over the interval
$0<\theta_{opt}<2/\overline{\sigma}^2$.
This strategy requires knowledge of
the exact solution, so for real data one would first need to train
the algorithm using simulated data, see \cite[Section 6]{HANSEN2018air}.
In our tests we took $cmax=100$ and used the exact phantom for training.
This is clearly unrealistic but we think it is of interest to include this rule for comparison with our other rules.
\section{Numerical Results}\label{Numerical}

We will report on tests using  examples from the field of image
reconstruction from projections.
To create the projection matrix $A$ and the right hand side  $b$
the {\text{paralleltomo}} function in the MATLAB package AIR tools
\cite{HANSEN2018air} is used.
We take the Shepp-Logan phantom as
 the original image $x^{*}$ discretized
into  $365 \times 365$ square pixels.
In the first test problem we use $88$ views uniformly distributed
over $180$ degrees, and $516$ projections per view.
Since zero rows do not contribute to the reconstruction,
after identifying and removing these, the resulting matrix $A$
has  dimension $40796 \times 133225$ ({\it case-one}).
Although in our application
iterative methods  usually are more competitive the more  underdetermined
the system is 
this might not be the
case in other applications. Therefore we also consider taking more
rays (264) per projection leading to a matrix of dimension
$122388\times 133225$ ({\it case-two}).
 Apart from using
noise-free data we added  independent Gaussian noise of mean $0$ and
 relative noise-level ($\|\delta b\|/\|b\|$) $2\%$ and $5\%$
respectively.
 In the experiments we set $x^{0}=0$ and $C_{[t]}=[0,1]^{n}$
for $t=1, \ldots, p$.
We further partition the matrix $A$ and the
 right hand side $b$ into $8$ and $22$ blocks, respectively.
The largest and smallest (nonzero) singular values
of each block is estimated using the {\it{power method}} \cite{saad2003}.
We further used
    Cimmino's $M$-Matrix.
The error metric is
{\it{Relative Error}}  defined by
\begin{equation}
  \mbox{Relative Error}=\frac{\|x^{k}-x^{*}\|}{\|x^{*}\|}.
\end{equation}
All codes are written in MATLAB(R2015a) and conducted on a PC with a
Intel Core i7-7700K CPU @ 4.2 GHz and 16 GB RAM.

When using the $\Gamma-$rule we need to estimate $\|M^{\frac{1}{2}}\delta b\|.$
Therefore we randomly generated a new vector
$\overline{\delta b}$ analogous with original $\delta b$.
To show that the estimate $\|M^{\frac{1}{2}}\overline{\delta b}\|$
does not strongly influence the final results, for each case,
three random $\overline{\delta b}$ with different noise levels are generated
as follows.
We first generated
$ e= randn(size(b))$, then put $\overline{\delta b}= g\|b\|e/\|e\|$
where $g$ denotes guessed noise level. We take $g=0.01, 0.02$ and $0.03$
for $2\%$ noise and $g=0.03, 0.05$ and $0.07$  for  $5\%$ noise.
Table \ref{tableX1} 
shows exact
(i.e. $\|M^{\frac{1}{2}}\delta b\|$)
and estimated values (i.e. $\|M^{\frac{1}{2}}\overline{\delta b}\|$).
It will be seen  from Figures below 
that the error-curves are quite robust versus the estimates of the noise.

\begin{table}[]
\caption{Exact and estimated values for $\|M^{\frac{1}{2}}\delta b\|$.}
\begin{tabular}{ccc|c|c|c|}
\cline{4-6}
\multicolumn{1}{l}{}                                 & \multicolumn{1}{l}{}              & \multicolumn{1}{l|}{}                 & \multicolumn{3}{c|}{\textbf{Estimated value}}                                                                   \\ \hline
\multicolumn{1}{|c|}{\textbf{Problem/ noise}}                 & \multicolumn{1}{c|}{\textbf{\# of blocks}} & \textbf{Exact value} & \textbf{g=1\%}                      & \textbf{g=2\%}                      & \textbf{g=3\%}                      \\ \hline
\multicolumn{1}{|c|}{\multirow{2}{*}{case-one/ 2\%}} & \multicolumn{1}{c|}{8}            & 14.04                                 & 5.07                                & 10.14                               & 15.22                               \\ \cline{2-6}
\multicolumn{1}{|c|}{}                               & \multicolumn{1}{c|}{22}           & 19.71                                 & 8.70                                & 17.40                               & 26.10                               \\ \hline
\multicolumn{1}{|c|}{\multirow{2}{*}{case-two/ 2\%}} & \multicolumn{1}{c|}{8}            & 8.75                                  & 4.87                                & 9.56                                & 14.33                               \\ \cline{2-6}
\multicolumn{1}{|c|}{}                               & \multicolumn{1}{c|}{22}           & 15.82                                 & 7.59                                & 15.17                               & 22.77                               \\ \hline
\multicolumn{1}{l}{}                                 & \multicolumn{1}{l}{}              & \multicolumn{1}{l|}{}                 & \multicolumn{1}{l|}{\textbf{g=3\%}} & \multicolumn{1}{l|}{\textbf{g=5\%}} & \multicolumn{1}{l|}{\textbf{g=7\%}} \\ \hline
\multicolumn{1}{|c|}{case-one/ 5\%}                  & \multicolumn{1}{c|}{8}            & 28.36                                 & 19.29                               & 32.14                               & 45.01                               \\ \cline{2-6}
\multicolumn{1}{|c|}{}                               & \multicolumn{1}{c|}{22}           & 45.91                                 & 31.55                               & 52.59                               & 73.64                               \\ \hline
\multicolumn{1}{|c|}{\multirow{2}{*}{case-two/ 5\%}} & \multicolumn{1}{c|}{8}            & 15.10                                 & 8.95                                & 14.91                               & 20.88                               \\ \cline{2-6}
\multicolumn{1}{|c|}{}                               & \multicolumn{1}{c|}{22}           & 21.57                                 & 13.48                               & 22.47                               & 31.46                               \\ \hline
\end{tabular}
\label{tableX1}
\end{table}

Following \cite{NK2015}, we take $r=1.5$ in the $\Psi_{3}$ rule.
As noted above $r\in(1,2]$ in the $\Gamma-$rule.
\begin{figure}
    \centering \label{r-fig-iter}
\subfigure[]{\includegraphics[width=0.45\textwidth, height=4cm,
trim={1cm .75cm  1cm  0.5cm },clip]
{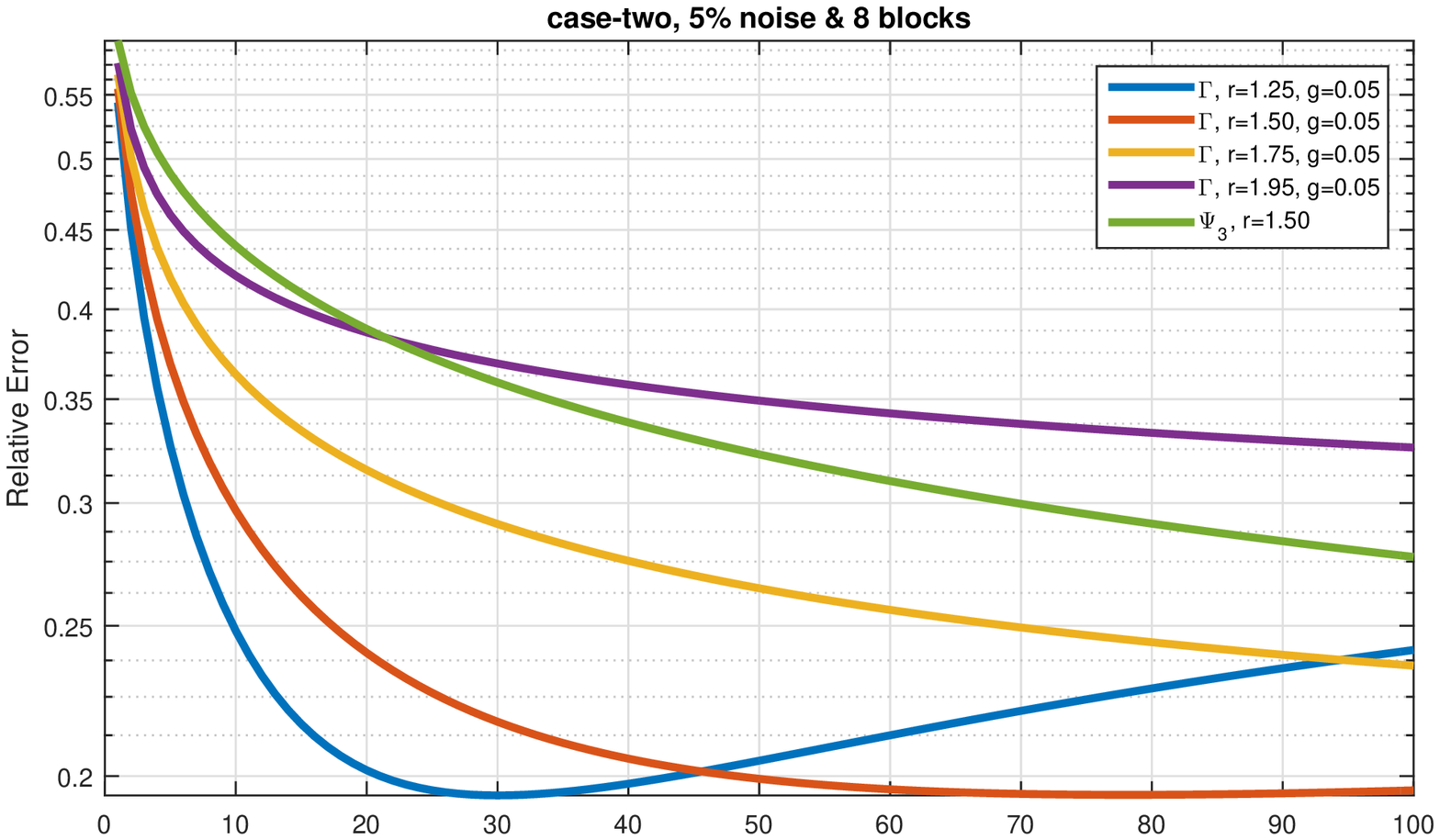}}
    \quad
\subfigure[]{\includegraphics[width=0.45
\textwidth, height=4cm, trim={1cm .75cm  1cm  0.5cm },clip]
{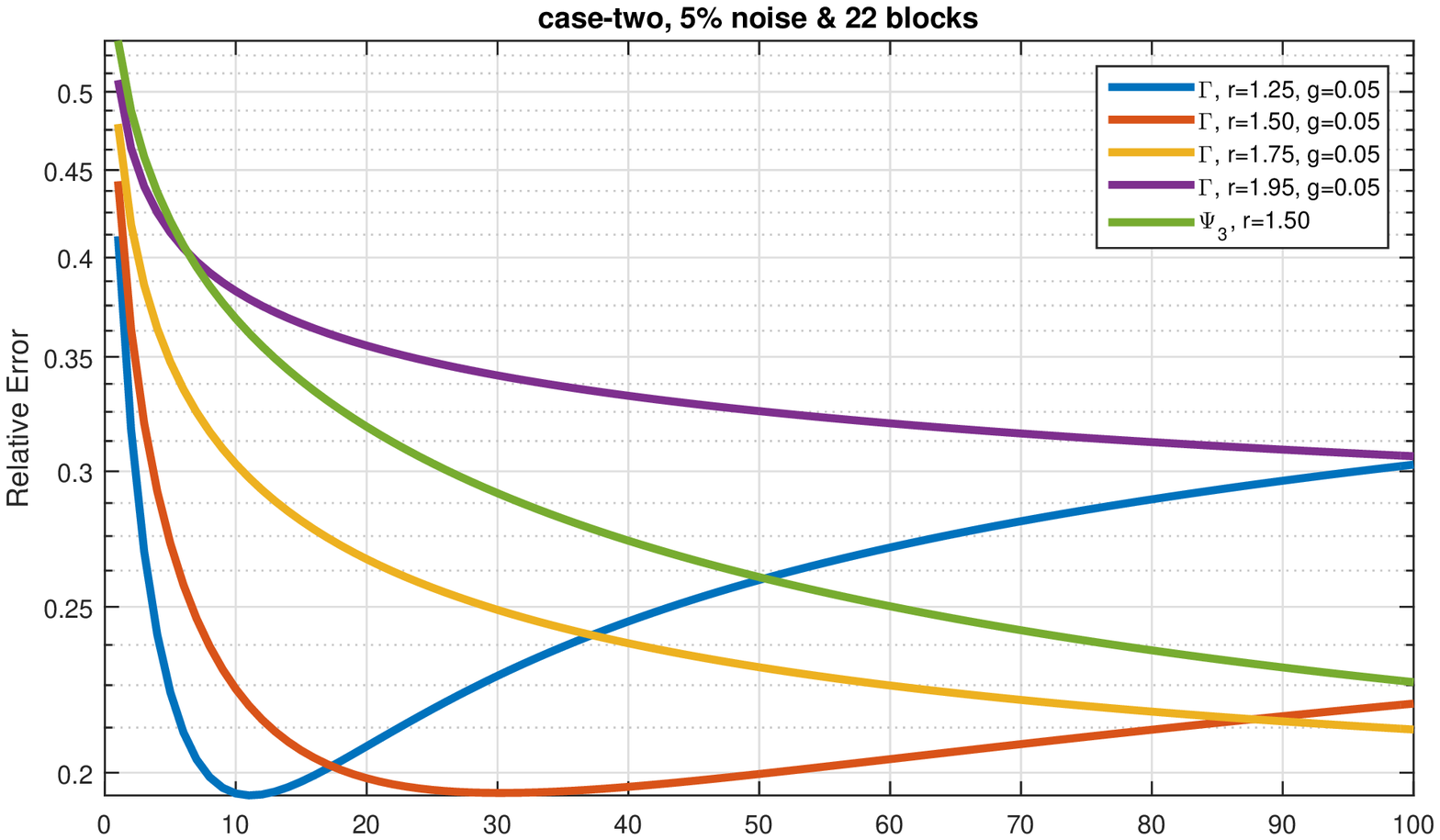}}
  \caption{The effect of using different values for $"r"$ on
$\Gamma$ rule.}
\end{figure}
In Figure \ref{r-fig-iter}
the effect of picking different $r-$values is
displayed. As seen, the choice of proper value of $r$ can have efficacious
impact
on the rate of convergence. The value of $r$ should also be chosen in such a
way that the property (\ref{lambda:less:sigma}) is satisfied, at least
after a few iterations. To insure this, we  take
 $r=1.5$ and $1.75$ for $2\%$ and $5\%$ noise, respectively.
Then (\ref{lambda:less:sigma}) will be satisfied, for $k \geq 2$.
We further remark that
our computational experience shows that the unregularized problem
$(\alpha=0)$ gives results that are indistinguishable from those of
the regularized problem with $\alpha=\underline{\sigma}^2$.
In the sequel we study relative error curves during
$cmax=100$ cycles, but will also extend to $cmax=500$ cycles to study
semiconvergence. Figure \ref{cons}  illustrates the importance of incorporating
constraints, especially for highly underdetermined systems,
during the iterations.

\begin{figure}  
  \centering \label{cons}
  \subfigure[]{\includegraphics[width=0.45\textwidth,height=4cm,
trim={1cm .75cm  1cm  0.5cm },clip]{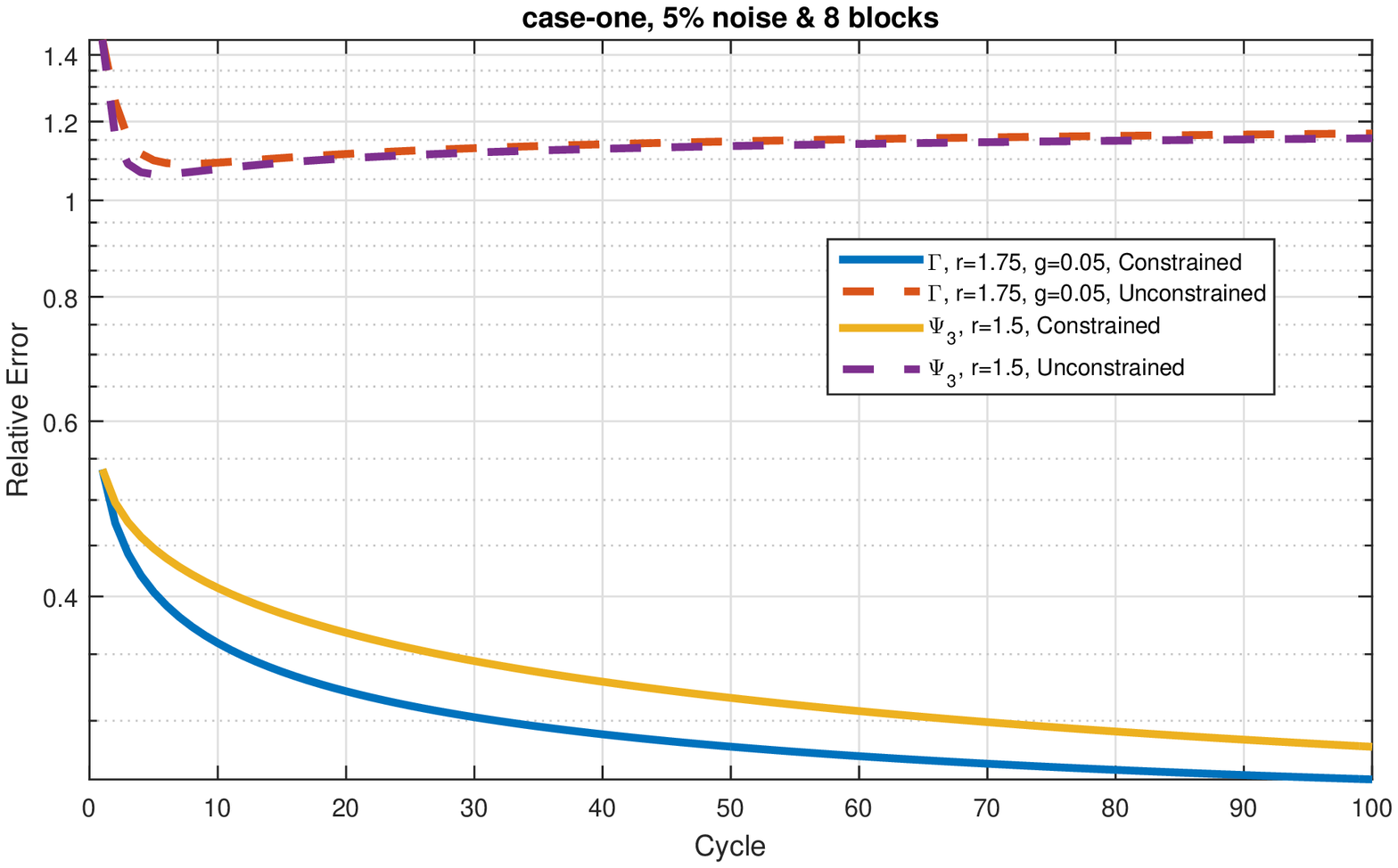}}
  \quad
  \subfigure[]{\includegraphics[width=0.45\textwidth,height=4cm,
trim={1cm .75cm  1cm  0.5cm },clip]{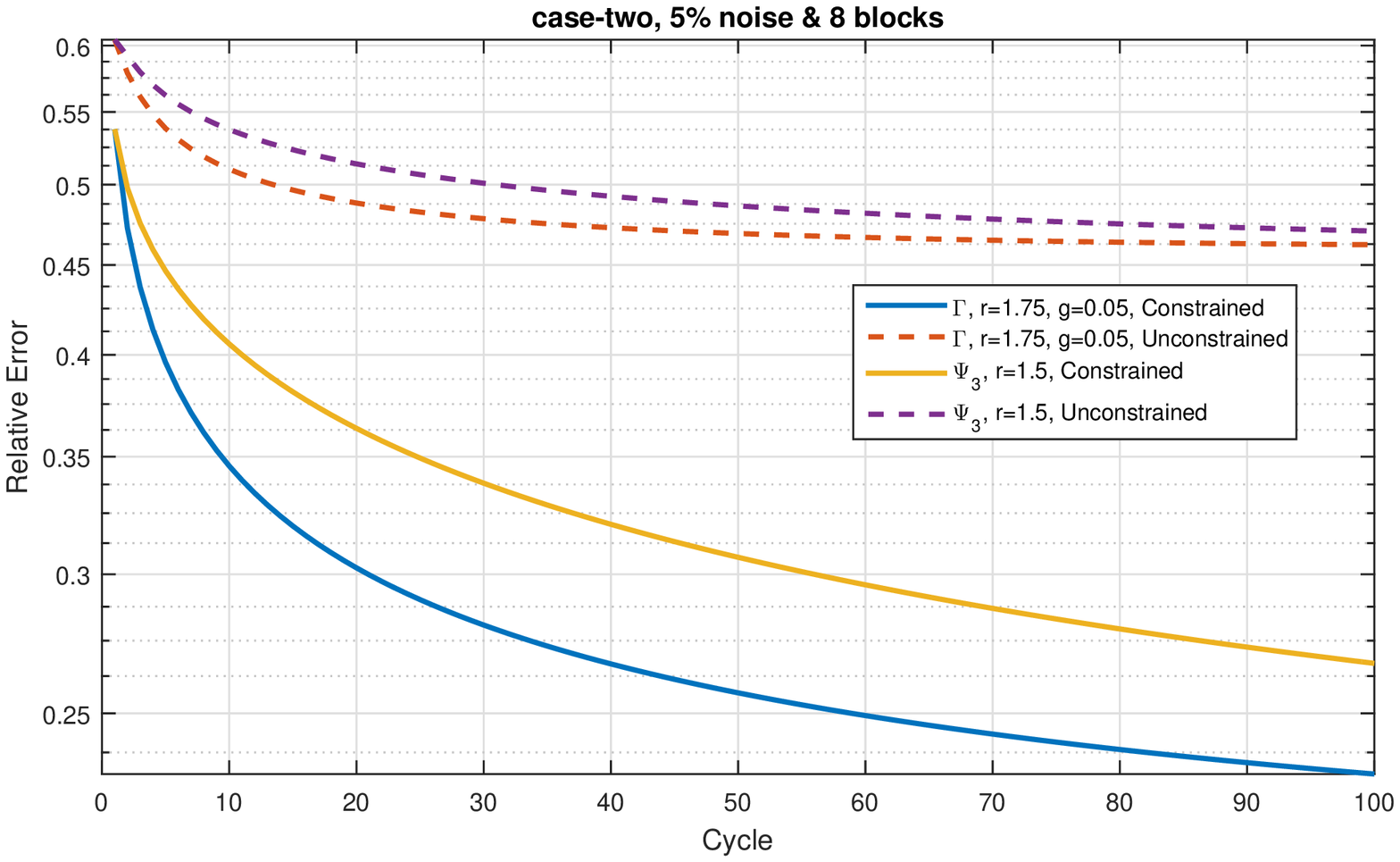}}
  \caption{The
effect of using constraints (solid curves) and not using constraints (dotted
curves) in $\Gamma$ and $\Psi_3$ rules.}
\end{figure}

We next discuss Figures \ref{c1-2n} and \ref{figC2n2}
where we display error-curves, and
 relaxation parameters for both case-one and case-two using
2$\%$ noise-level.
Since the relaxation
parameters showed a similar behavior for the same noise-level we only display
these for case-one.
We observe that the relative error is much smaller for the $\Gamma-$rule
than for the best $\Psi-$rule ($\Psi_3$). The reason for this behavior could be that the corresponding relaxation
parameters are bigger using the $\Gamma-$rule than using the $\Psi-$rule.
\begin{figure} 
    \centering \label{c1-2n}
    \subfigure[]{\includegraphics[width=0.45\textwidth,height=4cm, trim={1cm .75cm  1cm  0.5cm },clip]
{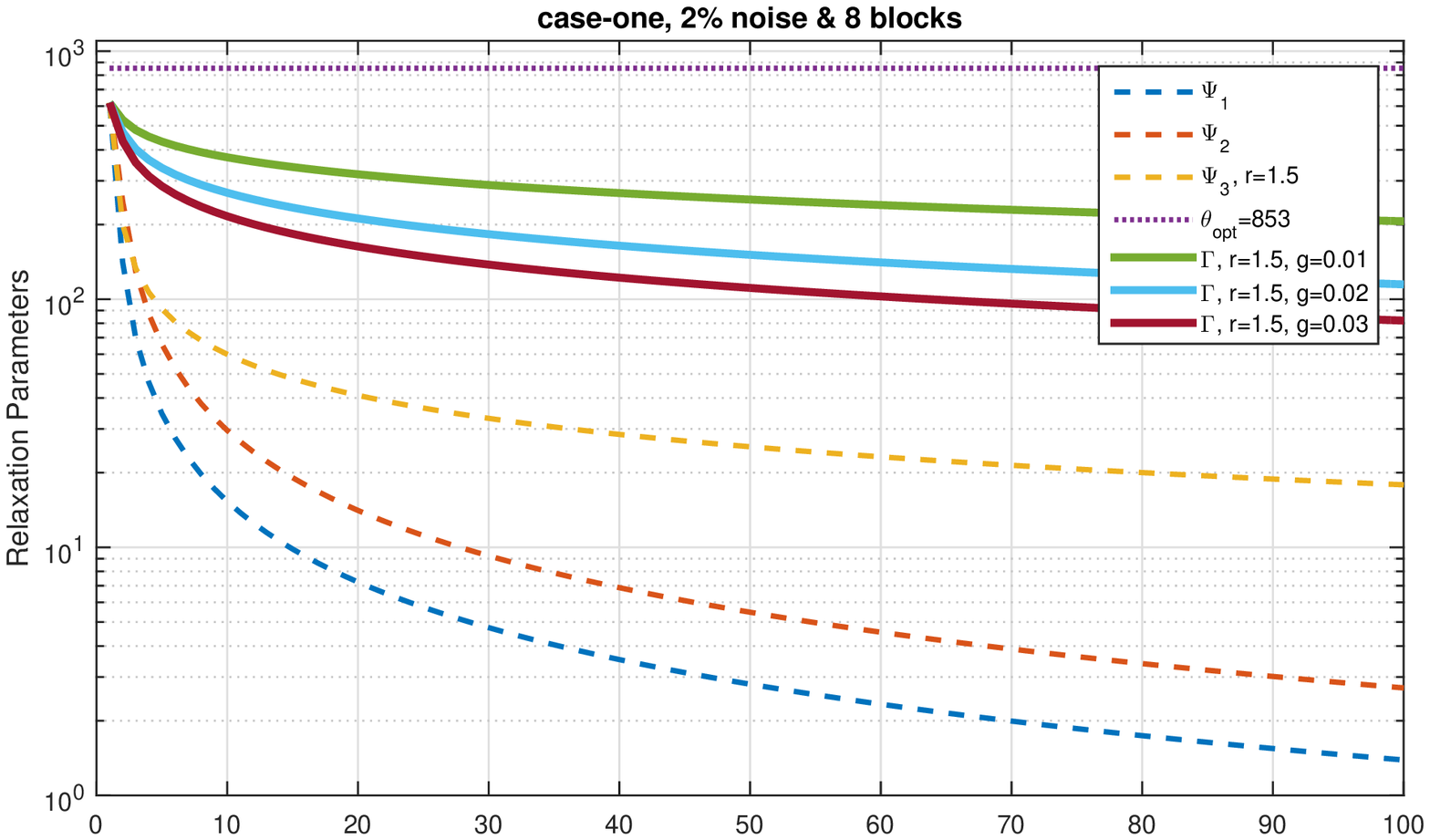}}
    \quad
    \subfigure[]{\includegraphics[width=0.45\textwidth,height=4cm, trim={1cm .75cm  1cm  0.5cm },clip]
{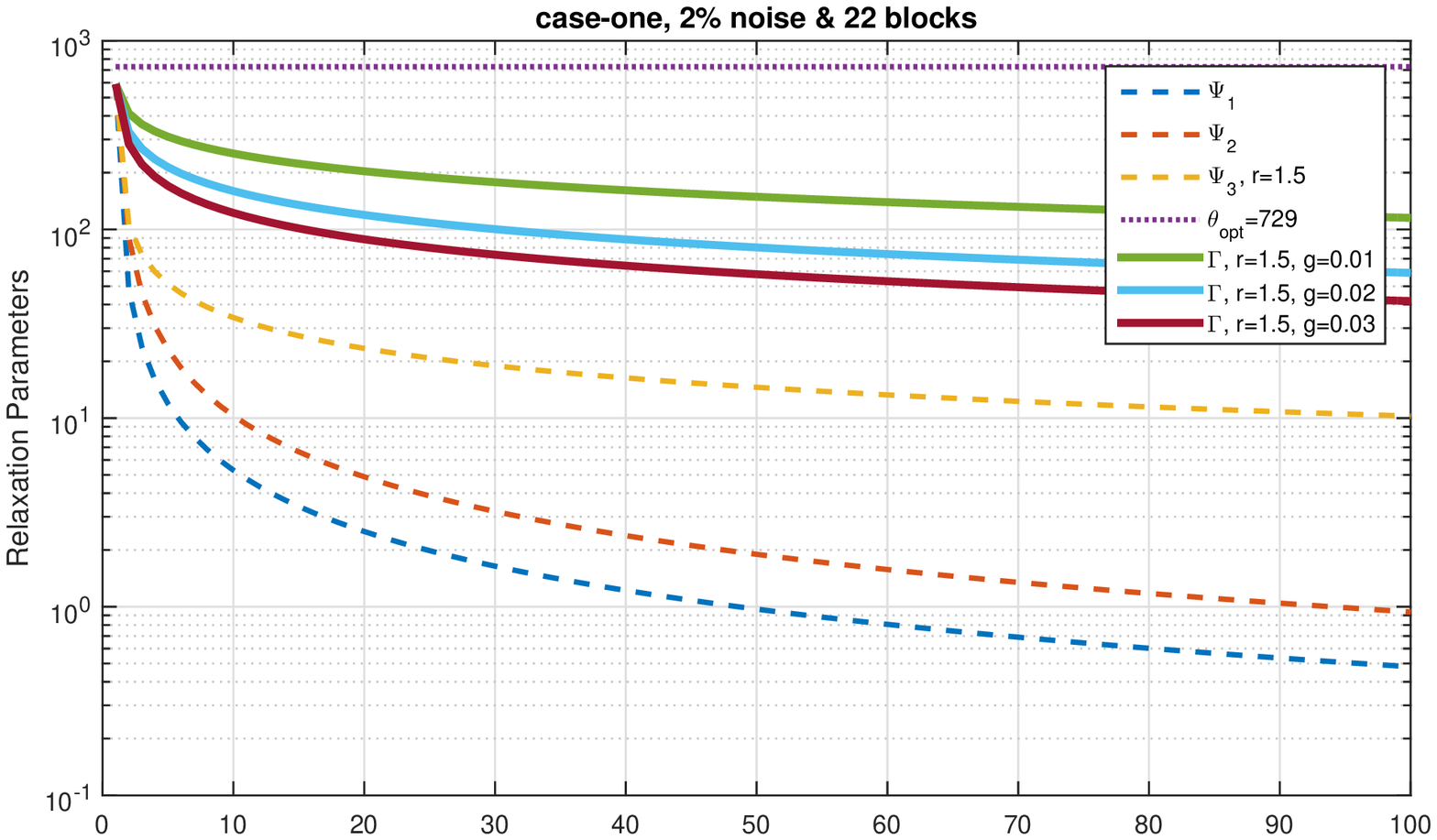}}
    \\
        \subfigure[]{\includegraphics
[width=0.45\textwidth,height=4cm, trim={1cm .75cm  1cm  0.5cm},clip]
{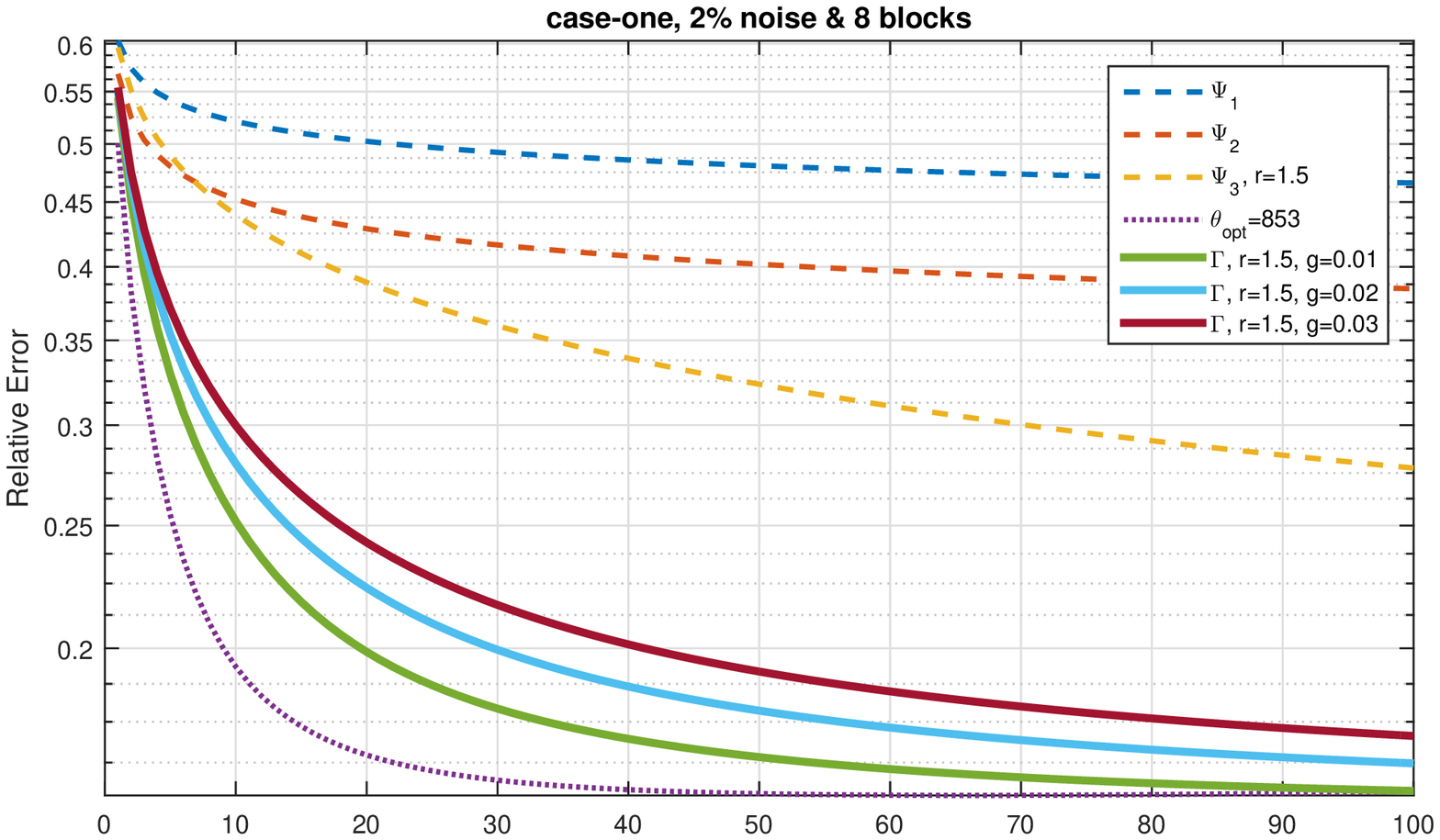}}\label{fig2c}
    \quad
    \subfigure[]{\includegraphics[width=0.45\textwidth,height=4cm, trim={1cm .75cm  1cm  0.3cm},clip]
{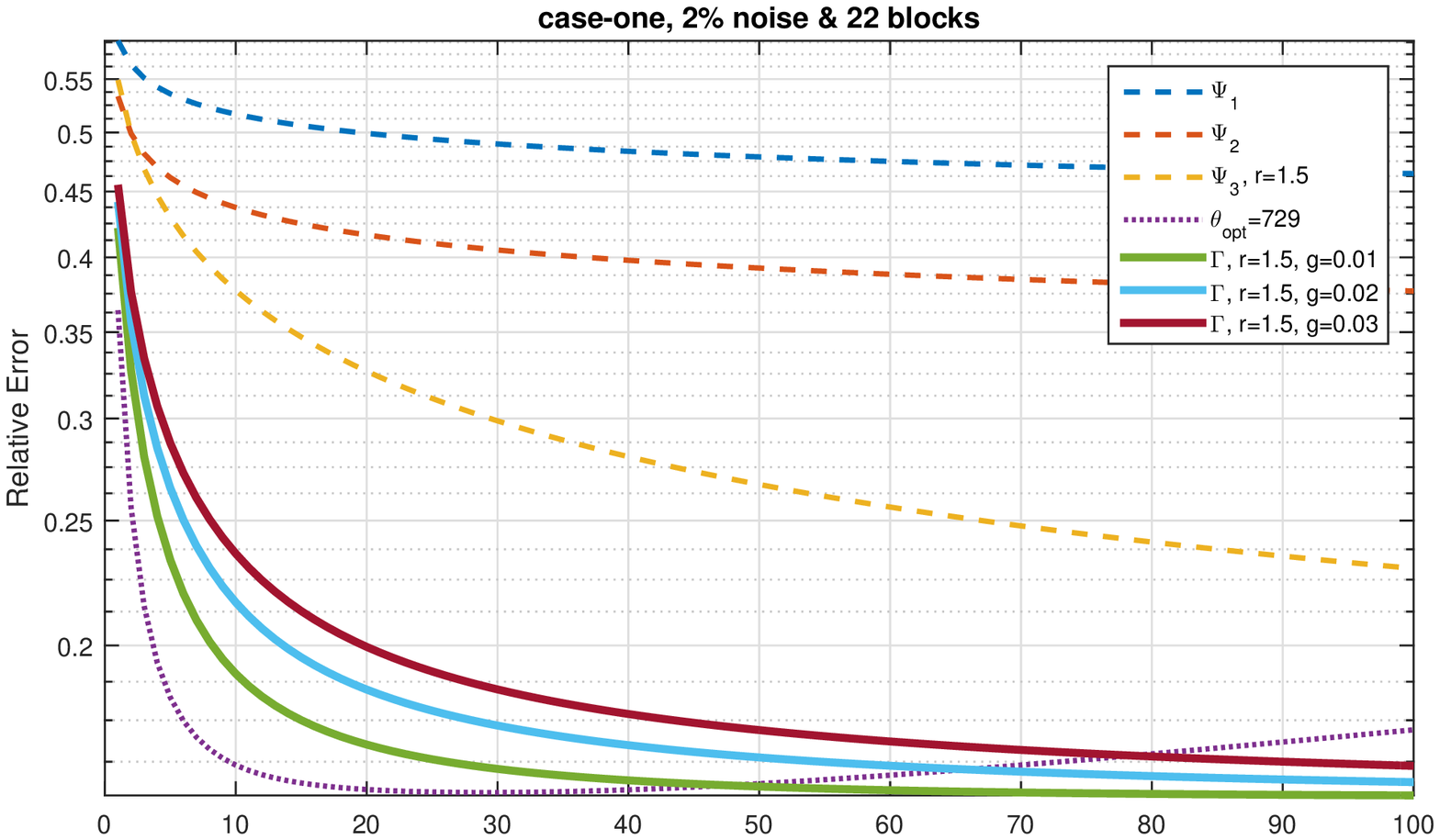}}
    \caption{Case-one with $2\%$ noise.
Relaxation parameter behavior (first row) and relative error history (second row) for $8$ blocks (left column) and $22$ blocks (right column).}
\end{figure}

\begin{figure} 
    \centering
        \subfigure[]{\includegraphics[width=0.45\textwidth,height=4cm,
trim={1cm .75cm  1cm  0.5cm},clip]
{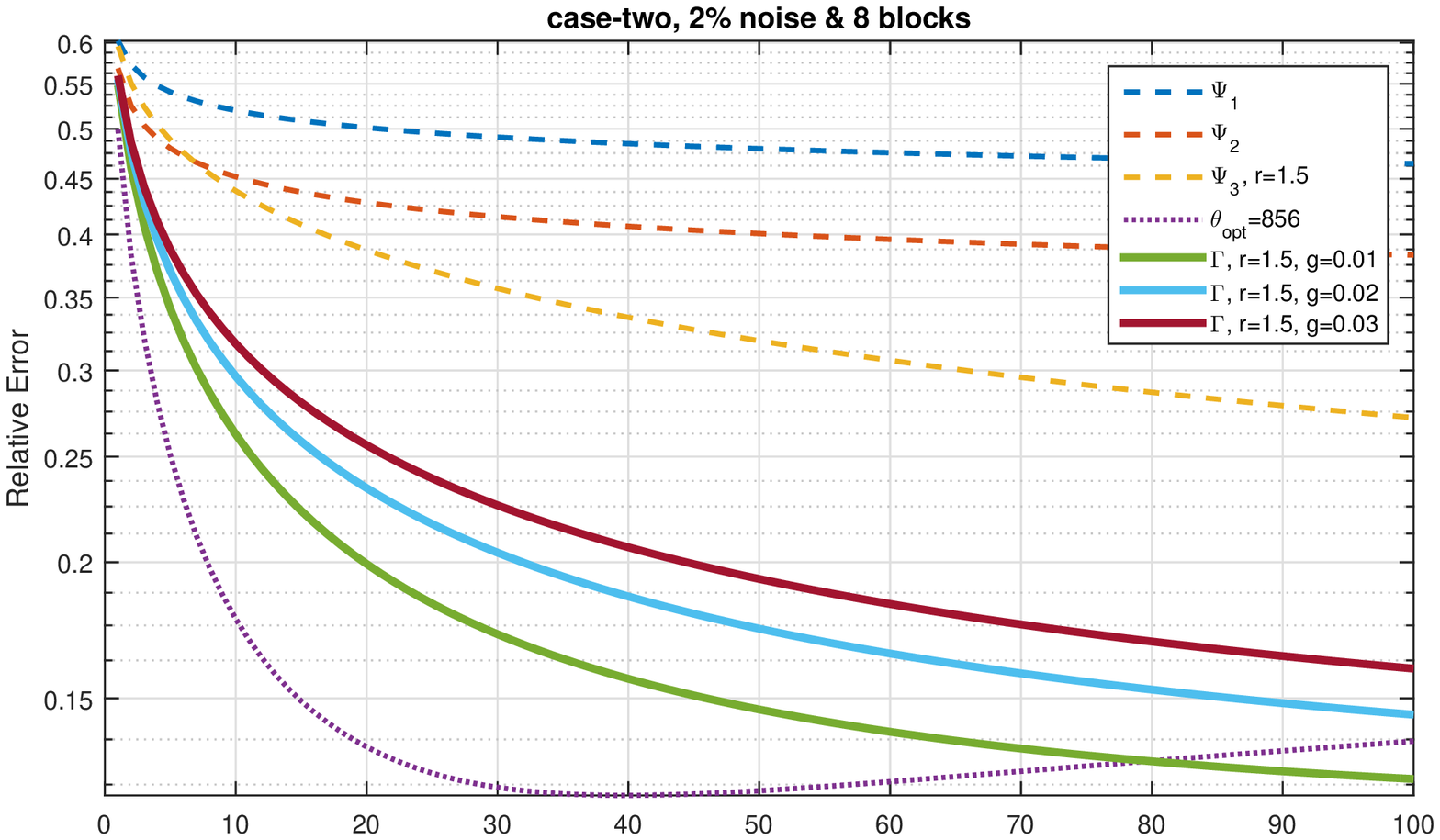}}
    \quad
    \subfigure[]{\includegraphics[width=0.45\textwidth,height=4cm,
trim={1cm .75cm  1cm  0.5cm},clip]
{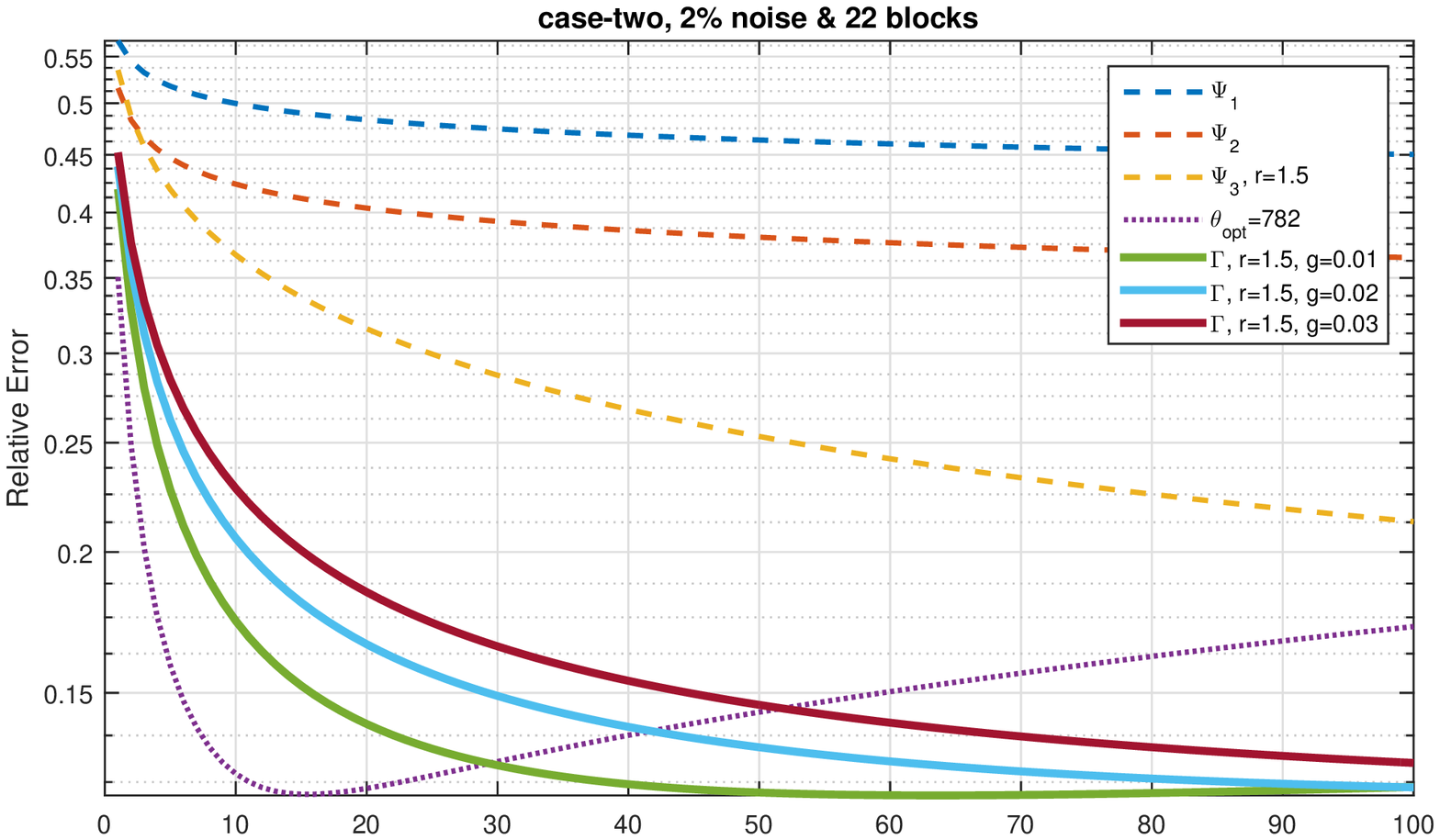}}
    \caption{Case-two with $2\%$ noise.
 Relative error history for $8$ blocks (left column) and $22$ blocks
 (right column).}
\label{figC2n2}
\end{figure}

In Figures \ref{c1-5n} and \ref{figC2n5} 
we show  error curves
and relaxation parameters (for case-two)  using 5$\%$ noise.
It is seen that now 
there is smaller difference in relative error
between the $\Gamma$-rule and the $\Psi_3$-rule.
We see from the figures  that  the
relaxation parameters using the two rules are quite close in value.
We have also listed the minimal error and corresponding cycle number in
Tables \ref{table3new} and \ref{table4new}.
\begin{figure} 
    \centering \label{c1-5n}
        \subfigure[]{\includegraphics[width=0.45\textwidth,height=4cm, trim={1cm .75cm  1cm  0.5cm},clip]
{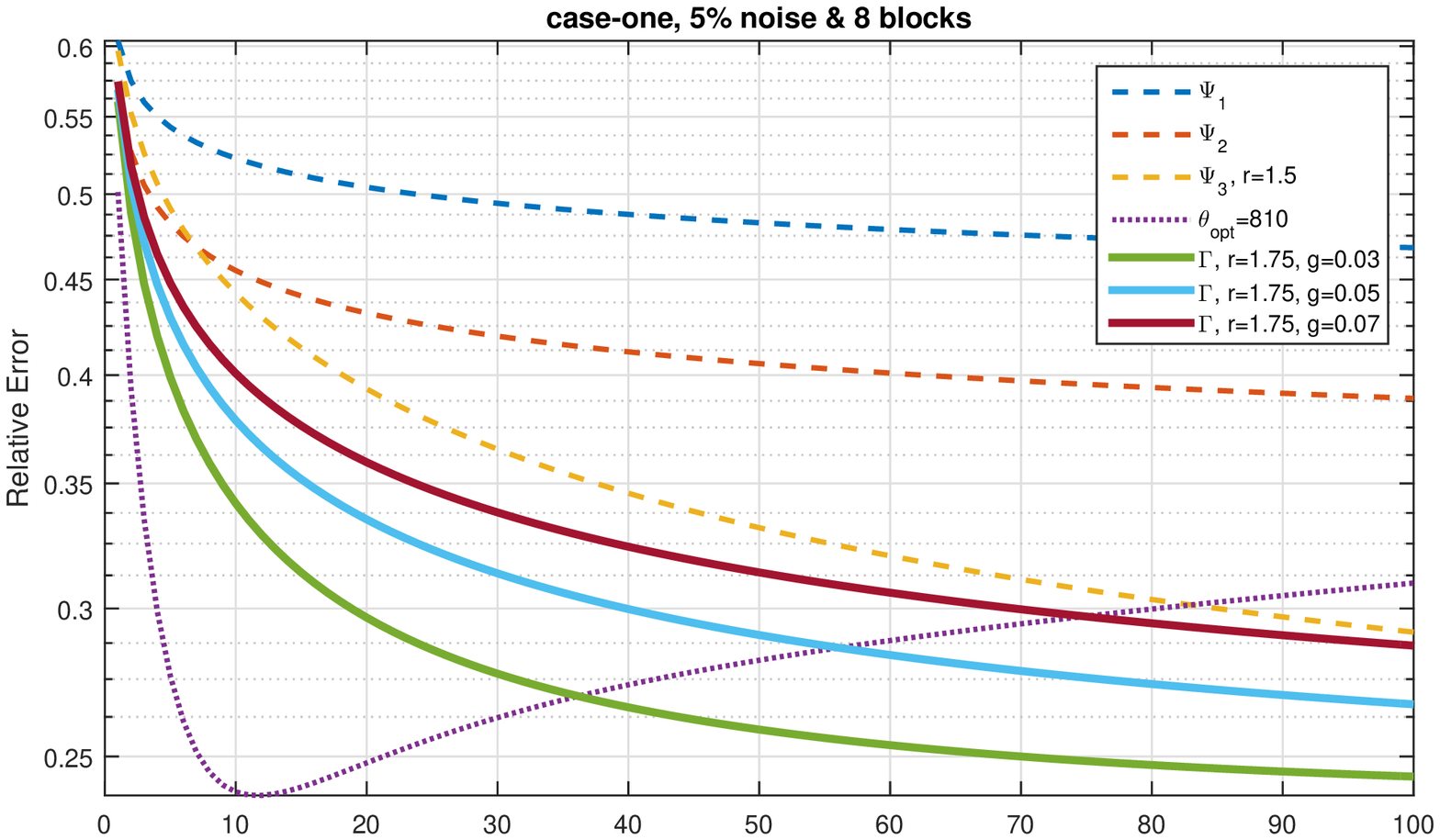}}
    \quad
    \subfigure[]{\includegraphics[width=0.45\textwidth,height=4cm, trim={1cm .75cm  1cm  0.5cm},clip]
{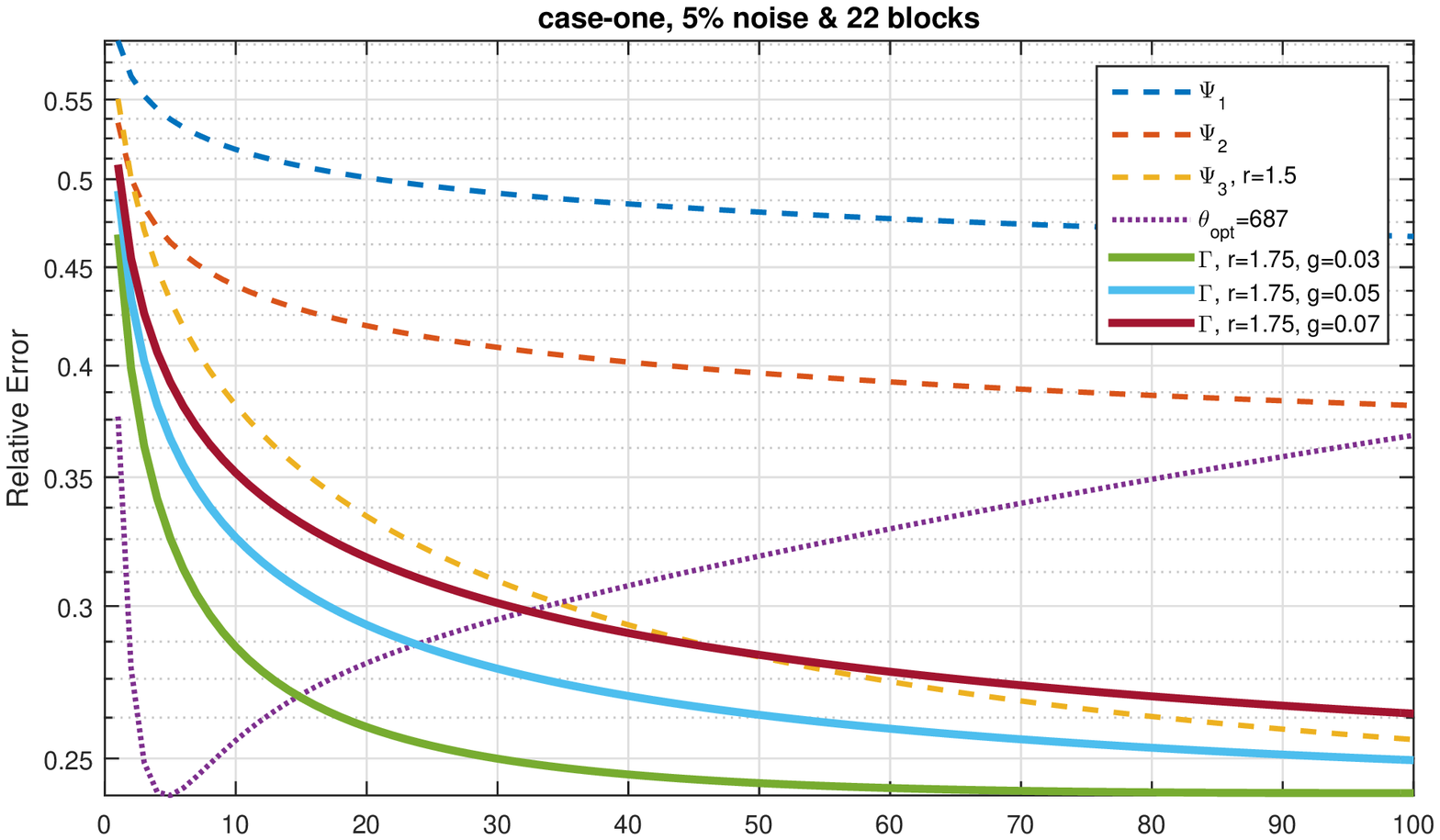}}
    \caption{Case-one with $5\%$ noise.
Relative  error history for $8$ blocks (left column) and $22$ blocks
(right column).}
\end{figure}

\begin{figure}  
    \centering
    \subfigure[]{\includegraphics[width=0.45\textwidth,height=4cm, trim={1cm .75cm  1cm  0.5cm },clip]{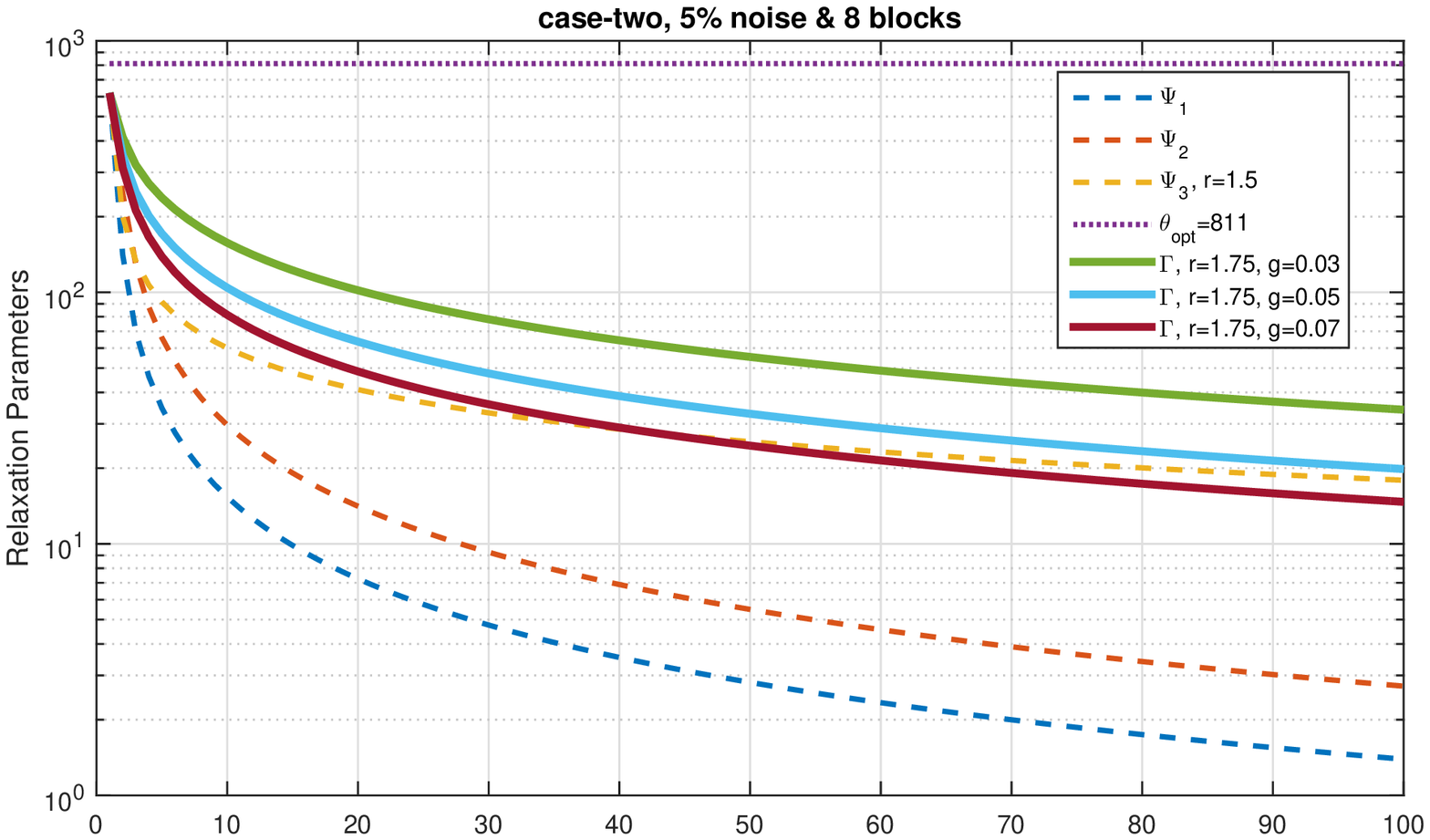}}
    \quad
    \subfigure[]{\includegraphics[width=0.45\textwidth,height=4cm, trim={1cm .75cm  1cm  0.5cm },clip]{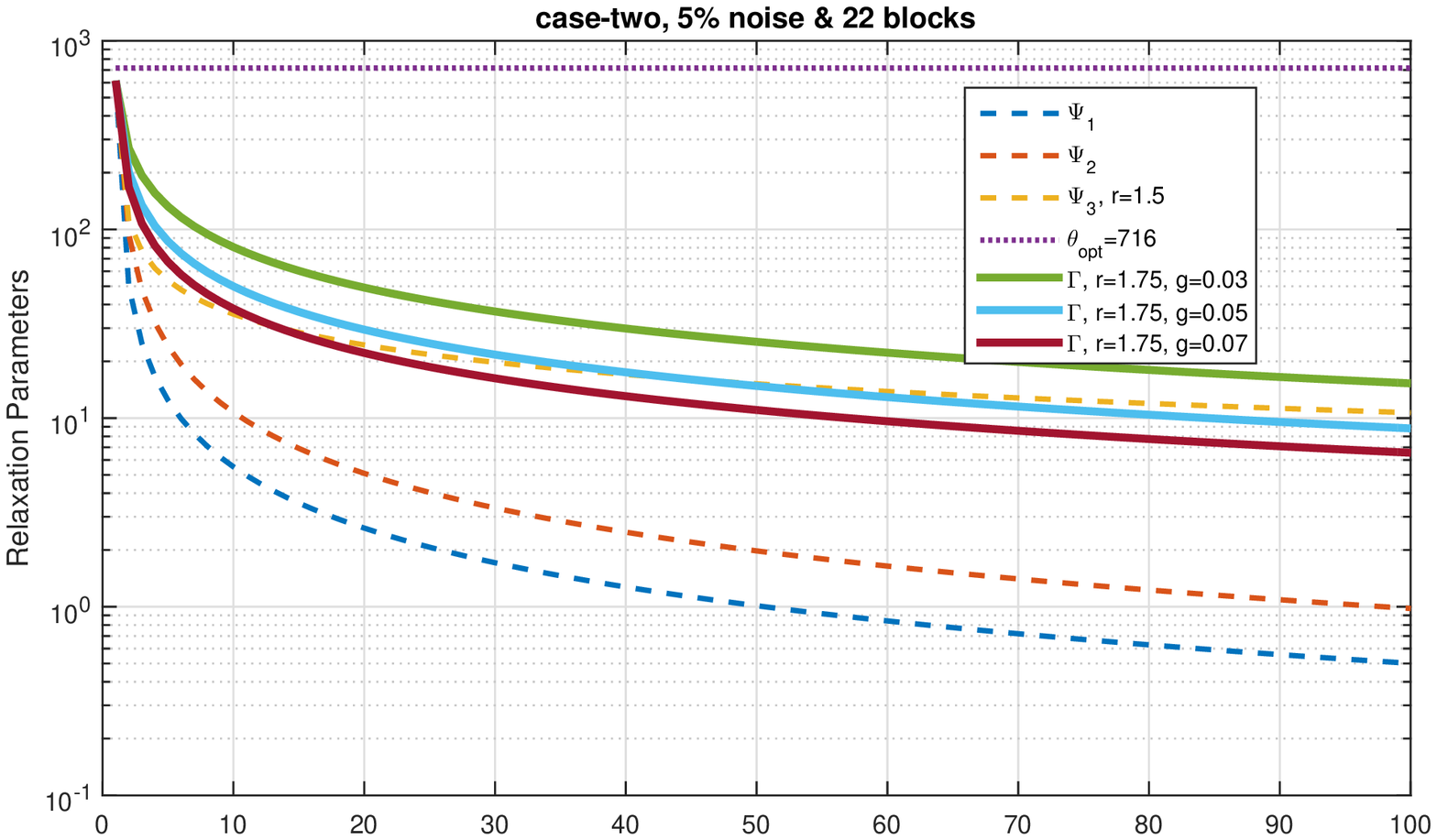}}
    \\

    \subfigure[]{\includegraphics[width=0.45\textwidth,height=4cm, trim={1cm .75cm  1cm  0.5cm},clip]{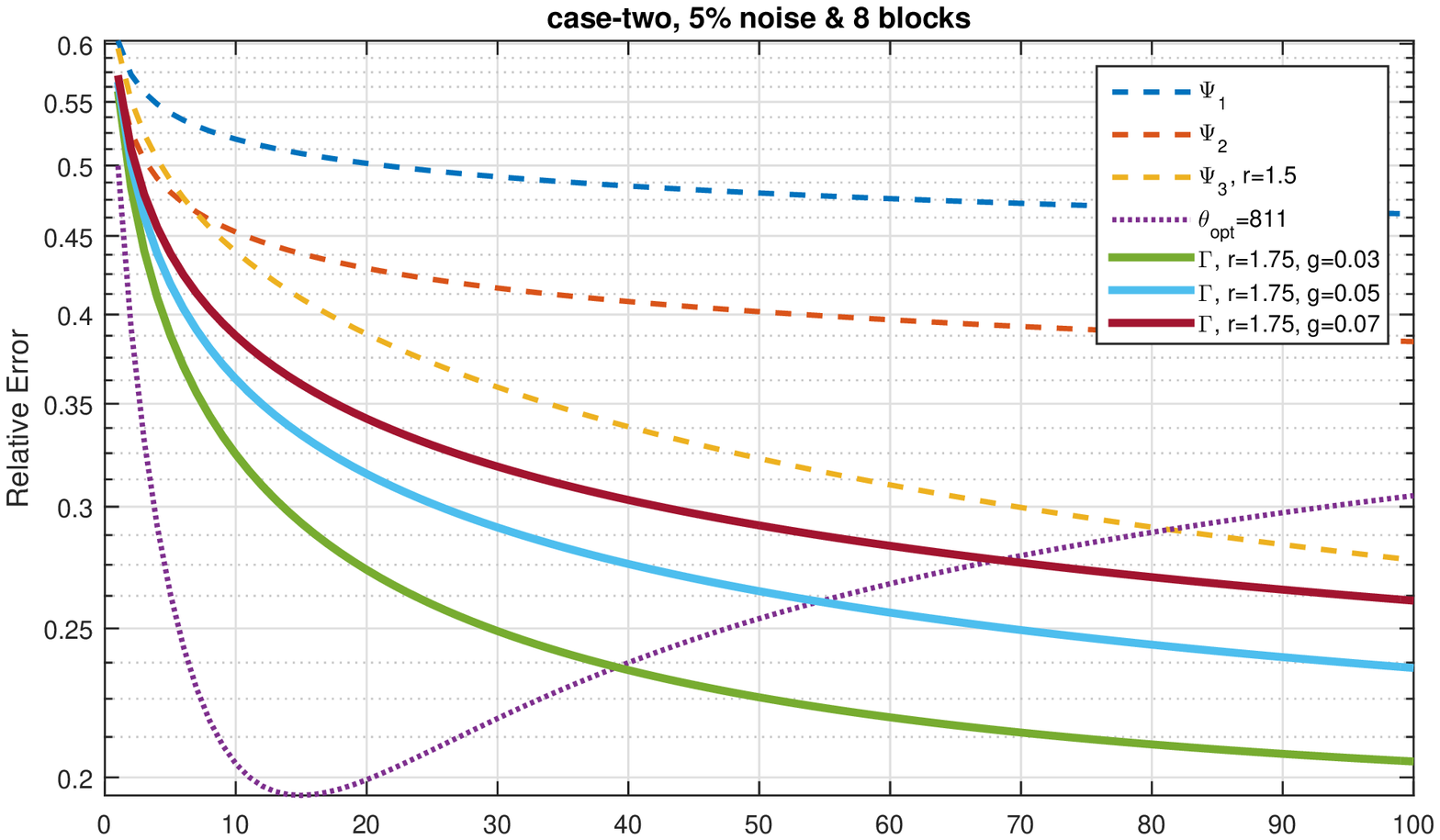}}
    \quad
    \subfigure[]{\includegraphics[width=0.45\textwidth,height=4cm, trim={1cm .75cm  1cm  0.5cm},clip]{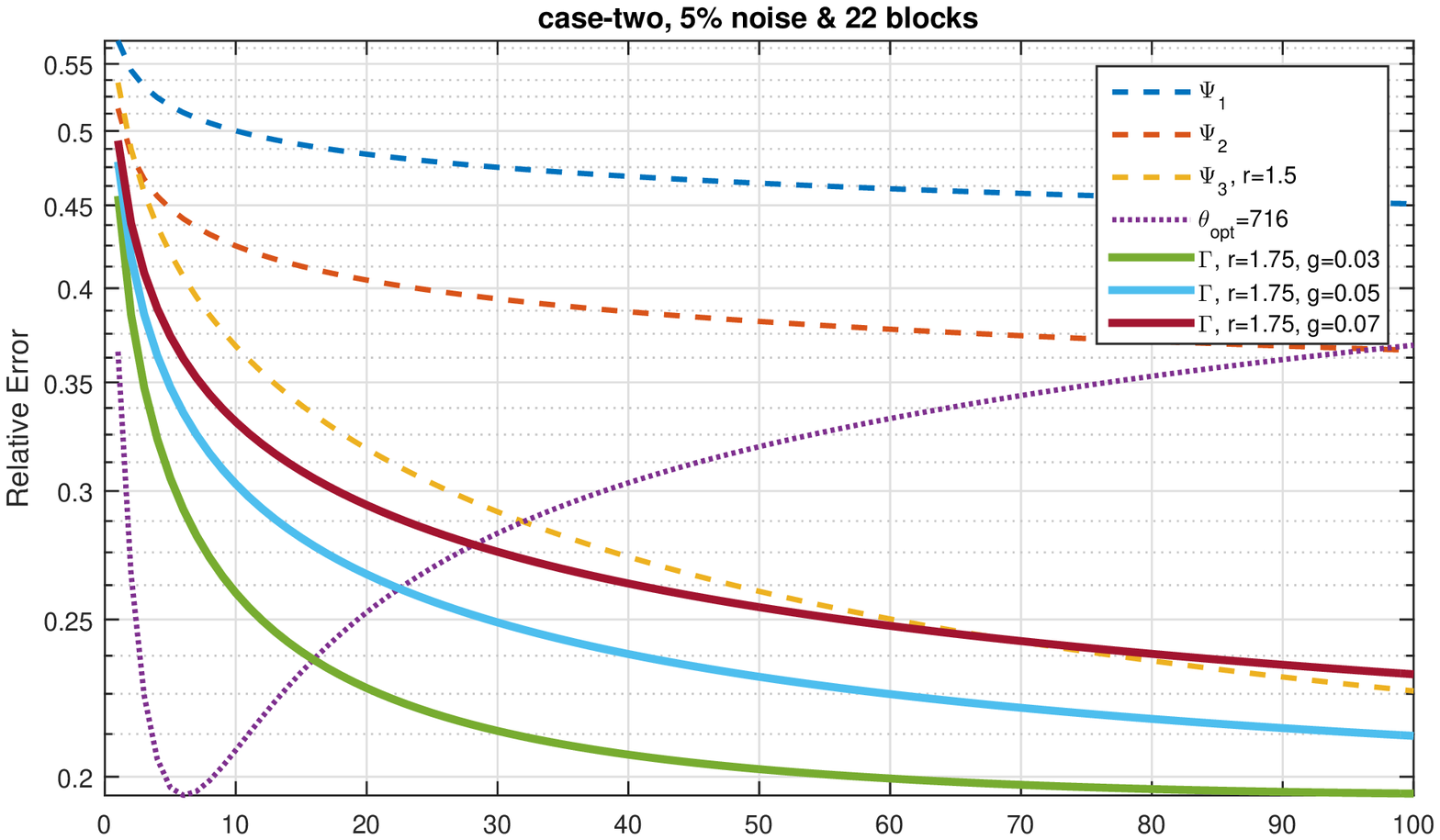}}
    \caption{Case-Two with $5\%$ noise.
 Relaxation parameter behavior (first row) and relative error history (second row) for $8$ blocks (left column) and $22$ blocks (right column).}\label{figC2n5}
\end{figure}

\begin{table}[]
{\small
\caption{Minimum relative error and corresponding cycle number , when $2\%$ noise applied.}
\begin{tabular}{|c|c|c|c|c|c|c|}
\hline
\multirow{3}{*}{\textbf{Test Problem}} & \multirow{3}{*}{\textbf{\# of blocks}} & \multicolumn{5}{c|}{\textbf{Strategy}}                                                                                                                                \\ \cline{3-7}
                                       &                                        & \multirow{2}{*}{\textbf{$\theta_{opt}$}} & \multirow{2}{*}{$\Psi_3$} & \multicolumn{3}{c|}{$\Gamma$} \\ \cline{5-7}
                                       &                                        &                                                          &                                                      & \textbf{$g=0.01$} & \textbf{$g=0.02$} & \textbf{$g=0.03$} \\ \hline
\multirow{2}{*}{\textbf{Case One}}     & \textbf{8}                             & (66, 0.1531)                                             & (100, 0.2914)                                         & (100, 0.1543)   & (100, 0.1622)   & (100, 0.1706)   \\ \cline{2-7}
                                       & \textbf{22}                            & (29, 0.1538)                                              & (100, 0.2295)                               & (100, 0.1530)   & (100, 0.1567)   & (100, 0.1613)   \\ \hline
\multirow{2}{*}{\textbf{Case Two}}     & \textbf{8}                             & (40, 0.1221)                                             & (100, 0.2715)                                        & (100, 0.1265)   & (100, 0.1449)   & (100, 0.1597)   \\ \cline{2-7}
                                       & \textbf{22}                            & (15, 0.1219)                                             & (100, 0.2128)                                        & (64, 0.1217)    & (100, 0.1237)   & (100, 0.1300)   \\ \hline
\end{tabular}
\label{table3new}
}
\end{table}

\begin{table}[h!]
{\small
\caption{Minimum relative error and corresponding cycle number , when $5\%$ noise applied.}
\begin{tabular}{|c|c|c|c|c|c|c|}
\hline
\multirow{3}{*}{\textbf{Test Problem}} & \multirow{3}{*}{\textbf{\# of blocks}} & \multicolumn{5}{c|}{\textbf{Strategy}}                                                                                                                                \\ \cline{3-7}
                                       &                                        & \multirow{2}{*}{\textbf{$\theta_{opt}$}} & \multirow{2}{*}{$\Psi_3$} & \multicolumn{3}{c|}{$\Gamma$} \\ \cline{5-7}
                                       &                                        &                                                          &                                                      & $g=0.03$          & $g=0.05$          & $g=0.07$          \\ \hline
\multirow{2}{*}{\textbf{Case One}}     & \textbf{8}                             & (12, 0.2383)                                             & (100, 0.2914)                                         & (100, 0.2439)   & (100, 0.2666)   & (100, 0.2866)   \\ \cline{2-7}
                                       & \textbf{22}                            & (5, 0.2392)                                              & (100, 0.2557)                                           & (97, 0.2398)    & (100, 0.2495)   & (100, 0.2639)   \\ \hline
\multirow{2}{*}{\textbf{Case Two}}              & \textbf{8}                             & (15, 0.1947)                                    & (100, 0.2769)                                           & (100, 0.2606)   & (100, 0.2356)   & (100, 0.2408)   \\ \cline{2-7}
                                       & \textbf{22}                            & (6, 0.1948)                                              & (100, 0.2559)                                        & (100, 0.1952)   & (100, 0.2200)   & (100, 0.2313)   \\ \hline
\end{tabular}
\label{table4new}
}
\end{table}

We note from Figures \ref{c1-2n}, \ref{figC2n2}, \ref{c1-5n} and \ref{c1-5n} 
that semi-convergence has hardly begun
for the $\Gamma$-rule. To see more clearly this effect we extend in
Figure \ref{cmax500marked}
the error curves (for two cases) to $cmax=500$.
In this Figure
we have marked the point where the error has its minimum,
i.e. the cycle number where semi-convergence starts.
We see that the $\theta-$opt rule has the fastest convergence
(note again that we
used the exact phantom for training). For this rule the semi-convergence
behavior is quite pronounced, and hence it requires a reliable stopping
criterion
(the choice of stopping criterion is an interesting issue but is not addressed
in this paper). In contrast the $\Psi-$ and $\Gamma-$ rules show little
effect of semi-convergence, and therefore it is less critical
where the iterations are stopped.
\begin{figure}  
  \centering \label{cmax500marked}
  \subfigure[]{\includegraphics[width=0.47\textwidth,height=4cm,trim={1cm .75cm  1cm  0.5cm },clip]{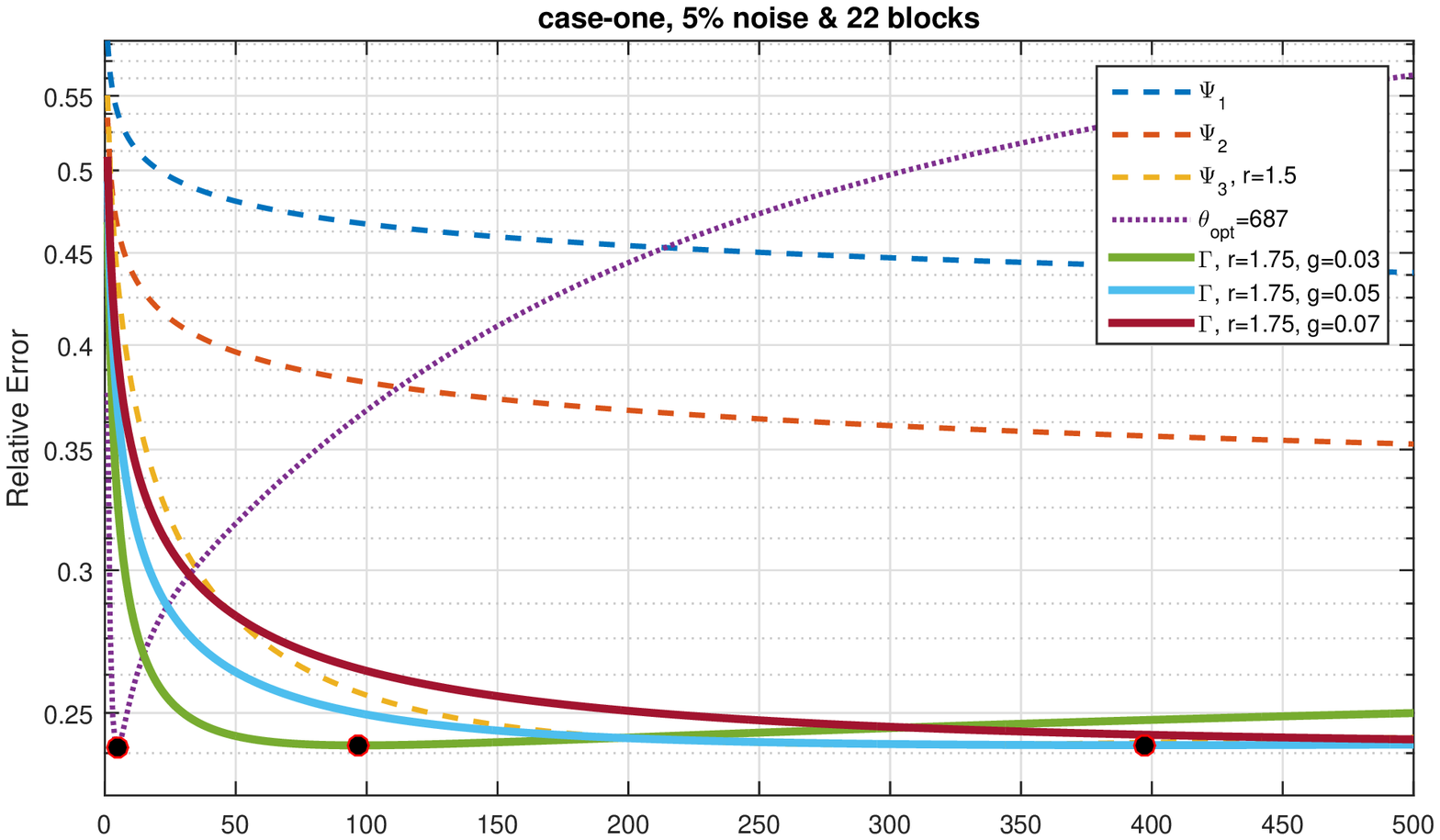}}
  \quad
  \subfigure[]{\includegraphics[width=0.47\textwidth,height=4cm, trim={1cm .75cm  1cm  0.5cm },clip]{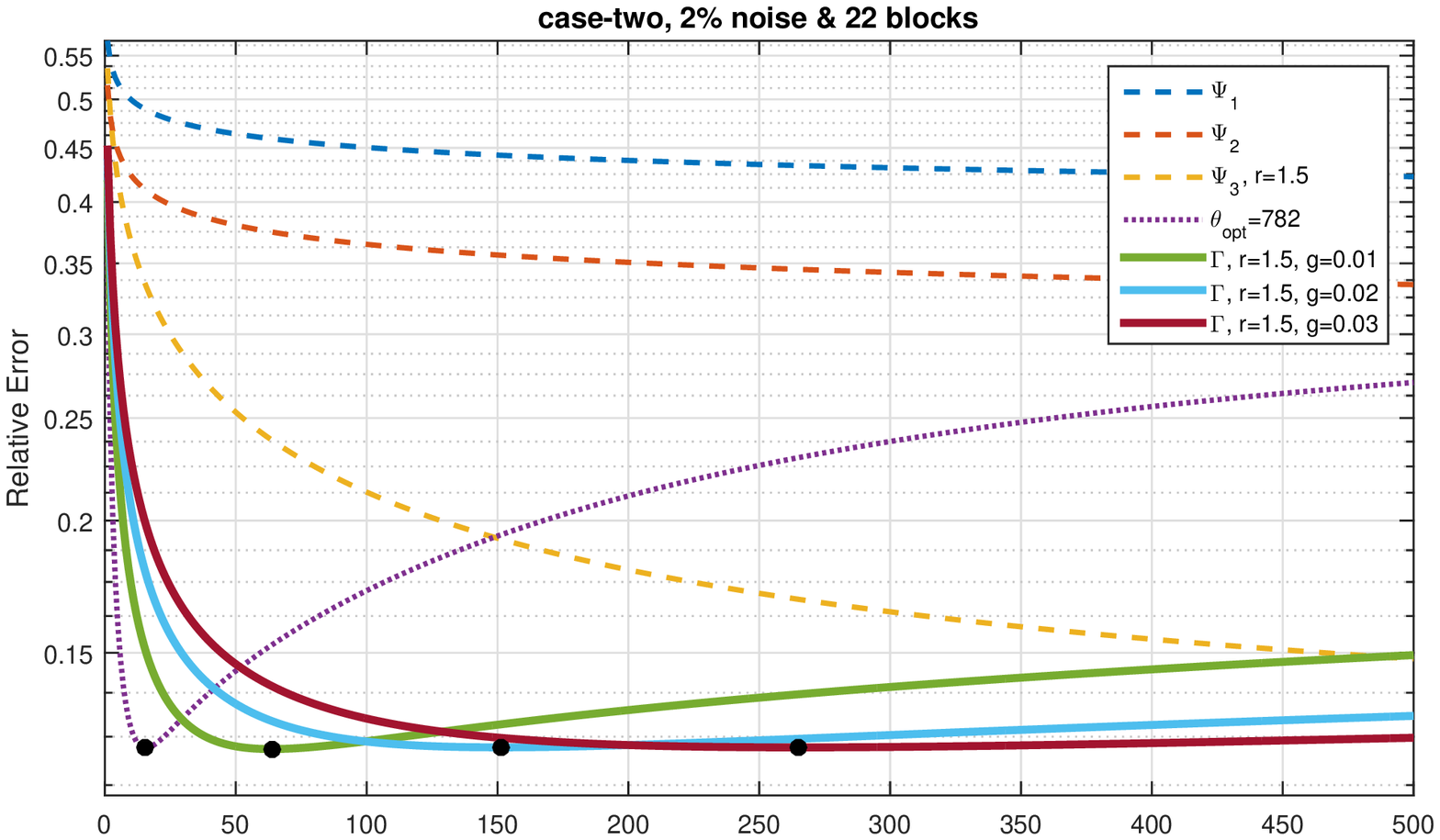}}
  \caption{The symbol $\bullet$ indicates the point that semi-convergence starts.}
\end{figure}

The behavior of the relative noise error
$\|x^k-\bar{x}^k\|/\|\bar{x}^k\|$, and the relative iteration error
$\|\bar{x}^k-x^*\|/\|x^*\|$ for case-one is shown in Figure \ref{upper:fig}.
Figure \ref{fig:recon}
shows the phantom and reconstructions using
the $\Psi_3$ and the $\Gamma$-rule respectively for case-one. To better judge
the quality of the
two reconstructions we display also
the corresponding difference images.
These are defined as the difference between the phantom and the respective
reconstruction. More artifacts and noise can be seen in the left image
($\Psi_3$)
than in the right image ($\Gamma$).

\begin{figure}  
  \centering
\includegraphics[width=0.65\textwidth,trim={1cm .75cm  1cm  0.25cm },clip]{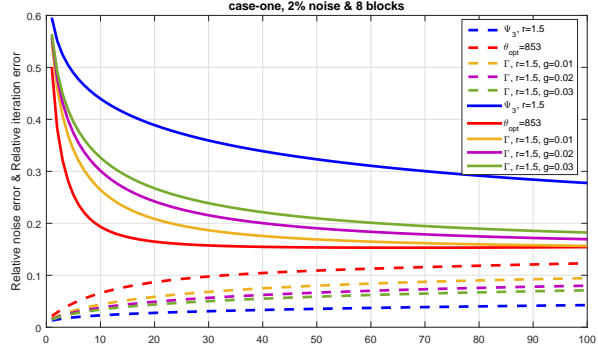}
\caption{Relative noise error $\|x^{k}-\bar{x}^k\|/\|x^{\ast}\|$ (dashed), and the relative iteration error $\|\bar{x}^{k}-x^{\ast}\|/\|x^{\ast}\|$.}\label{upper:fig}
\end{figure}

\begin{figure}[h!]
  \centering \label{fig:recon}
  \subfigure[]{\includegraphics[width=0.65\textwidth,
trim={1.25cm 3.35cm  1.25cm  1.5cm },clip]{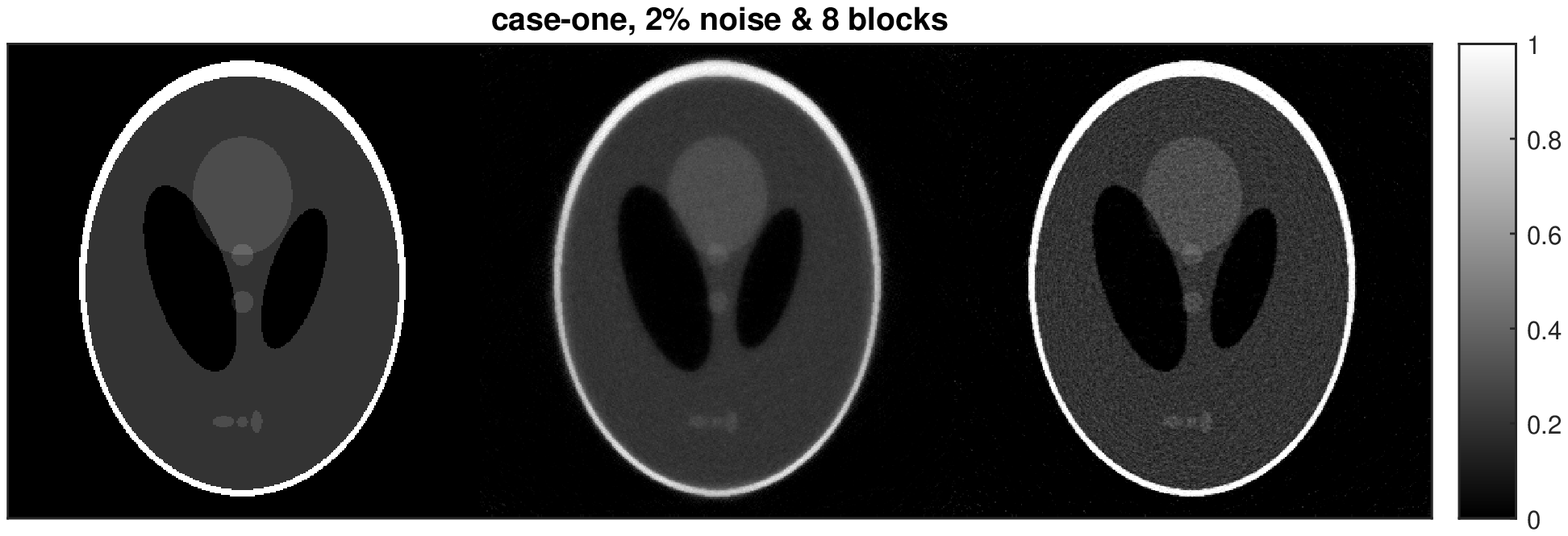}}
  \\
  \subfigure[]{\includegraphics[width=0.65\textwidth,
trim={1.25cm 2cm  1.5cm  1cm },clip]{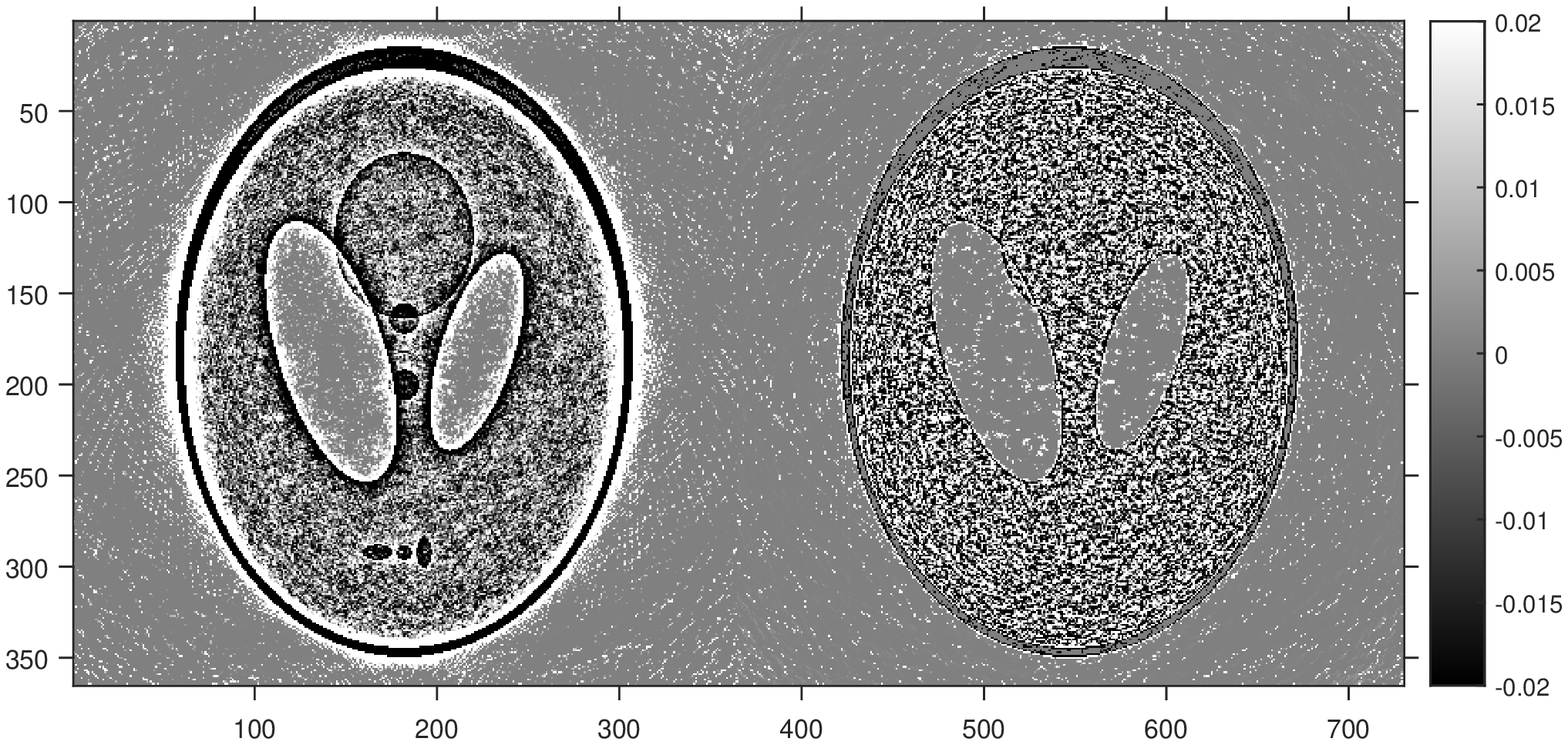}}
  \caption{(a) The original image (left), the reconstructed image using $\Psi_3$ strategy (middle) and  the reconstructed image using the $\Gamma$ strategy, and (b) the difference images for the $\Psi_3$ strategy (left) and $\Gamma$ strategy (right).}
\end{figure}

\section{Conclusion}\label{Conclusion}
We  define a sequential block-iterative iteration (\ref{e-xkac}) for
solving split feasibility problems in Hilbert space.
In this respect it compliments the
simultaneous block iteration given in \cite{CE-2005}
(defined in the finite dimensional case).
The basic operators involved are weakly regular cutters which includes e.g.  metric projections.
A complete convergence analysis is provided.
When the projecting sets are polyhedral it is also shown that the
iterates converge  linearly.
 The projected Block Iterative method (P-BIM) Algorithm \ref{BIP:algt} is a special case
 of our general method.
We also consider the noise error of P-BIM, and derive  a new upper
 bound.
This bound generalizes earlier bounds given in
\cite{EHN2012,elfving2010,Elfving2014,NK2015}.
In particular we extend the noise error analysis of block-iteration
to {\it projected} block-iteration, and also to the case
 when the relaxation
parameters are allowed to depend  on the noise.
Based on the new bound a new rule for picking relaxation parameters is derived.
 We demonstrate the performance of P-BIM using this and other relaxation
parameter rules on examples taken from tomographic imaging.

\section*{Acknowledgement}
We thank two anonymous referees for useful suggestions, and professor Per
Christian Hansen and professor Touraj Nikazad for their comments on an earlier
version.

\bibliographystyle{siamplain}

\bibliography{refMahdi}


\end{document}